\documentclass[11pt, notitlepage]{article}   


\usepackage{amsmath,amsthm,amsfonts}   

\usepackage{amscd}
\usepackage{amsfonts}
\usepackage{amssymb}
\usepackage{color}      
\usepackage{epsfig}
\usepackage{graphicx}           

\usepackage{tikz}

\theoremstyle{plain}
\newtheorem{theorem}{Theorem}[section]
\newtheorem{lemma}[theorem]{Lemma}
\newtheorem{corollary}[theorem]{Corollary}
\newtheorem{proposition}[theorem]{Proposition}
\newtheorem{observation}[theorem]{Observation}
\newtheorem{remark}[theorem]{Remark}
\newtheorem{example}[theorem]{Example}

\newtheorem{question}[theorem]{Question}

\theoremstyle{definition}

\newcommand{\diam}{\textnormal{diam}}
\newcommand{\Sd}{\textnormal{Sd}}


\errorcontextlines=0
\def\finf{\mathop{{\rm I}\kern -.27 em {\rm F}}\nolimits}


\textwidth 6.4in \textheight 8.8in \topmargin -0.5in \oddsidemargin
0.1in \evensidemargin 0.1in

\newcommand{\rmv}[1]{}

\begin{document}


\title{The Simultaneous Fractional Dimension of Graph Families}

\author{{\bf{Cong X. Kang}}$^1$, {\bf{Iztok Peterin}}$^2$, and {\bf{Eunjeong Yi}}$^3$\\
\small Texas A\&M University at Galveston, Galveston, TX 77553, USA$^{1,3}$\\
\small University of Maribor, FEECS Smetanova 17, 2000 Maribor, Slovenia$^2$\\
{\small\em kangc@tamug.edu}$^1$; {\small\em iztok.peterin@um.si}$^2$; {\small\em yie@tamug.edu}$^3$}

\maketitle

\date{}

\begin{abstract}
Let $d_G(u,v)$ denote the least number of edges linking vertices $u$ and $v$ belonging to the vertex set $V$ of a graph $G$ or $\infty$ if the two vertices are not linked. Let $\diam(G)=\max\{d_G(x,y):x,y\in V\}$. For distinct vertices $x, y\in V$, let $R_G\{x,y\}=\{z\in V: d_G(x,z) \neq d_G(y,z)\}$. For a function $g$ defined on $V$ and any $U\subseteq V$, let $g(U)=\sum_{s\in U} g(s)$. A real-valued function $g:V\rightarrow[0,1]$ is a \emph{resolving function} of $G$ if $g(R_G\{x,y\}) \ge 1$ for any pair of distinct $x,y\in V$, and the \emph{fractional dimension}, $\dim_f(G)$, of $G$ is $\min\{g(V): g \mbox{ is a resolving function of } G\}$. Let $\mathcal{C}=\{G_1, G_2, \ldots, G_k\}$ be a family of connected graphs having a common vertex set $V$, where $k\ge 2$ and $|V|\ge 3$. In this paper, we introduce the simultaneous fractional dimension of a graph family. A real-valued function $h:V\rightarrow[0,1]$ is a \emph{simultaneous resolving function} of $\mathcal{C}$ if $h(R_G\{x,y\}) \ge 1$ for any distinct vertices $x,y\in V$ and for every graph $G\in \mathcal{C}$, and the \emph{simultaneous fractional dimension}, $\Sd_f(\mathcal{C})$, of $\mathcal{C}$ is $\min\{h(V): h \mbox{ is a simultaneous resolving function of }\mathcal{C}\}$. We observe that $1\le \max_{1 \le i \le k}\{\dim_f(G_i)\} \le \Sd_f(\mathcal{C}) \le \min\{\sum_{i=1}^{k}\dim_f(G_i), \frac{|V|}{2}\}$. We provide a family of graphs $\mathcal{C}$ satisfying $\Sd_f(\mathcal{C})=\max_{1 \le i \le k}\{\dim_f(G_i)\}$ and $\Sd_f(\mathcal{C})=\sum_{i=1}^{k}\dim_f(G_i)=\frac{|V|}{2}$, respectively. We characterize $\mathcal{C}$ satisfying $\Sd_f(\mathcal{C})=1$, examine $\mathcal{C}$ satisfying $\Sd_f(\mathcal{C})=\frac{|V|}{2}$, and determine $\Sd_f(\mathcal{C})$ when $\mathcal{C}$ is a family of vertex-transitive graphs. We also consider the simultaneous fractional dimension of a graph and its complement. We show that if $\{\diam(G), \diam(\overline{G})\} \neq \{3\}$ and $\diam(G) \le \diam(\overline{G})$, then $\Sd_f(G, \overline{G})=\dim_f(G)$, where $\overline{G}$ denotes the complement of $G$. We also determine $\Sd_f(G,\overline{G})$ when $G$ is a tree or a unicyclic graph. 
\end{abstract}

\noindent\small {\bf{Keywords:}} metric dimension, fractional metric dimension, resolving function, simultaneous (metric) dimension, simultaneous fractional (metric) dimension\\

\noindent\small {\bf{2010 Mathematics Subject Classification:}} 05C12


\section{Introduction}

Let $G$ be a finite, simple, and undirected graph with vertex set $V(G)$ and edge set $E(G)$. For any two vertices $x,y\in V(G)$, let $d_G(x, y)$ denote the minimum number of edges on a path connecting $x$ and $y$ in $G$, should such a path exist; otherwise, we define $d_G(x,y)=\infty$. The \emph{diameter}, $\diam(G)$, of $G$ is $\max\{ d_G(x,y): x,y \in V(G)\}$. The \emph{degree}, $\deg_G(v)$, of a vertex $v\in V(G)$ is the number of edges incident to $v$ in $G$; an \emph{end-vertex} is a vertex of degree one, a \emph{support vertex} is a vertex that is adjacent to an end-vertex, and a \emph{major vertex} is a vertex of degree at least three. The \emph{complement} of $G$, denoted by $\overline{G}$, has vertex set $V(\overline{G})=V(G)$ and edge set $E(\overline{G})$ such that $xy \in E(\overline{G})$ if and only if $xy \not\in E(G)$ for any distinct $x,y \in V(G)$. Let $P_n, C_n$ and $K_n$, respectively, denote the path, the cycle and the complete graph on $n$ vertices. For a positive integer $k$, let $[k]=\{1,2,\ldots, k\}$. 

A vertex $z \in V(G)$ \emph{resolves} a pair of distinct vertices $x$ and $y$ in $G$ if $d_G(x,z) \neq d_G(y,z)$. For two distinct vertices $x,y \in V(G)$, let $R_G\{x,y\}=\{z \in V(G): d_G(x,z) \neq d_G(y,z)\}$. A set $S \subseteq V(G)$ is a \emph{resolving set} of $G$ if $|S\cap R_G\{x,y\}| \ge 1$ for every pair of distinct vertices $x,y \in V(G)$. The \emph{metric dimension} $\dim(G)$ of $G$ is the minimum cardinality among all resolving sets of $G$. Metric dimension, introduced by  Slater~\cite{slater} and by Harary and Melter~\cite{harary}, has applications in robot navigation~\cite{tree2}, sonar~\cite{slater} and combinational optimization~\cite{sebo}, to name a few. It was noted in~\cite{NP} that determining the metric dimension of a general graph is an NP-hard problem. 

Brigham and Dutton~\cite{factor_domination} studied domination parameter for a family of edge-disjoint graphs on a common vertex set; they focused on the domination number for a graph and its complement. Recently, Ram\'{i}rez-Cruz et al.~\cite{juan} introduced the notion of a simultaneous resolving set and the simultaneous (metric) dimension as follows: for a family $\mathcal{C}=\{H_1, H_2, \ldots, H_t\}$ of (not necessarily edge-disjoint) connected graphs $H_i=(V, E_i)$ with a common vertex set $V$ (the union of whose edge sets is not necessarily the complete graph), a set $S\subseteq V$ is a \emph{simultaneous resolving set} of $\mathcal{C}$ if $S$ is a resolving set for every graph $H_i$, where $i\in[t]$, and the \emph{simultaneous dimension} of $\mathcal{C}$, denoted by $\Sd(\mathcal{C})$ or $\Sd(H_1, H_2, \ldots, H_t)$, is the minimum among the cardinalities of all simultaneous resolving sets of $\mathcal{C}$. It was shown in~\cite{juan} that, for any family $\mathcal{G}=\{G_1,\ldots, G_k\}$ of connected graphs with a common vertex set $V$, $1\le\max_{1 \le i \le k}\{\dim(G_i)\} \le \Sd(\mathcal{G}) \le \min\left\{\sum_{i=1}^{k} \dim(G_i), |V|-1\right\}\le |V|-1$.

The fractionalization of various graph parameters has been studied extensively (see~\cite{fractionalization}). Currie and Oellermann~\cite{oellermann} defined fractional (metric) dimension as the optimal solution to a linear programming problem by relaxing a condition of the integer programming problem for metric dimension. Arumugam and Mathew~\cite{fdim} officially studied the fractional dimension of graphs. For a function $g$ defined on $V(G)$ and for $U\subseteq V(G)$, let $g(U)=\sum_{s\in U} g(s)$. A real-valued function $g:V(G)\rightarrow[0,1]$ is a \emph{resolving function} of $G$ if $g(R_G\{x,y\}) \ge 1$ for any pair of distinct $x,y\in V(G)$, and the \emph{fractional dimension} $\dim_f(G)$ of $G$ is $\min\{g(V(G)): g \mbox{ is a resolving function of } G\}$. 

Analogous to the simultaneous dimension, we introduce the notion of simultaneous fractional (metric) dimension as follows. A real-valued function $g:V \rightarrow [0,1]$ is a \emph{simultaneous resolving function} for a family of graphs $\mathcal{C}=\{H_1, H_2, \ldots, H_t\}$ on a common vertex set $V$ if $g$ is a resolving function for every graph $H_i$, $i\in[t]$, and the \emph{simultaneous fractional dimension} of $\mathcal{C}$, denoted by $\Sd_f(\mathcal{C})$ or $\Sd_f(H_1, H_2, \ldots, H_t)$, is $\min\{g(V): g \mbox{ is a simultaneous resolving function of }\mathcal{C}\}$. Note that $\Sd_f(\mathcal{C})$ reduces to $\Sd(\mathcal{C})$ if the codomain of simultaneous resolving functions is restricted to $\{0,1\}$.

Next, we recall some known results on the fractional dimension of graphs. We recall some terminology. Fix a tree $T$. An end-vertex $\ell$ is called a \emph{terminal vertex} of a major vertex $v$ in $T$ if $d_T(\ell,v) < d_T(\ell,w)$ for every other major vertex $w$ in $T$. The terminal degree, $ter_T(v)$, of a major vertex $v$ is the number of terminal vertices of $v$ in $T$, and an \emph{exterior major vertex} is a major vertex that has a positive terminal degree. Let $\sigma(T)$ denote the number of end-vertices of $T$, and let $ex(T)$ denote the number of exterior major vertices of $T$.

\begin{theorem}\label{dim_frac_graph} 
\hfill{}
\begin{itemize}
\item[(a)] \emph{\cite{frac_sdim}} For any connected graph $G$ of order $n \ge 2$, $\dim_f(G)=1$ if and only if $G=P_n$.
\item[(b)] \emph{\cite{yi}} For any non-trivial tree $T$, $\dim_f (T)=\frac{1}{2}(\sigma(T)-ex_1(T))$, where $ex_1(T)$ denotes the number of exterior major vertices $u$ with $ter_T(u)=1$.
\item[(c)] \emph{\cite{fdim}} For $n\ge 3$, $\dim_f(C_n)=\left\{
\begin{array}{ll}
\frac{n}{n-1} & \mbox{ if $n$ is odd},\\
\frac{n}{n-2} & \mbox{ if $n$ is even}. 
\end{array}\right.$
\item[(d)] \emph{\cite{fdim}} For the Petersen graph $\mathcal{P}$, $\dim_f(\mathcal{P})=\frac{5}{3}$.
\item[(e)] \emph{\cite{fdim}} For the wheel graph $W_n=K_1+C_{n-1}$ (the join of $K_1$ and $C_{n-1}$) of order $n\ge 4$, 
\begin{equation*}
\dim_f(W_n)=\left\{
\begin{array}{ll}
2 & \mbox{ if } n\in\{4,5\},\\
\frac{3}{2} & \mbox{ if } n=6,\\
\frac{n-1}{4} & \mbox{ if } n \ge 7. 
\end{array}\right.
\end{equation*}
\item[(f)] \emph{\cite{frac_sdim}} If $B_m$ is a bouquet of $m\ge2$ cycles with a cut-vertex (i.e., the vertex sum of $m$ cycles at one common vertex), then $\dim_f(B_m)=m$.
\item[(g)] \emph{\cite{fdim, frac_sdim_CxK}} A connected graph $G$ of order $n\geq 2$ has $\dim_f(G)\leq \frac{n}{2}$; further, $\dim_f(G)=\frac{n}{2}$ if and only if there is a fixed-point-free permutation $\phi$ on $V(G)$ such that $|R\{v, \phi(v)\}|=2$ for each $v\in V(G)$. 
\end{itemize}
\end{theorem}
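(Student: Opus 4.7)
The plan is to prove each of the seven items of Theorem~\ref{dim_frac_graph} separately, following a common template: exhibit an explicit resolving function realizing the claimed value (for the upper bound), then match it with a lower bound derived either from a pointwise estimate on $\min_{x\neq y}|R_G\{x,y\}|$ or from LP duality (recall that $\dim_f$ is the optimum of a linear program, whose dual is a fractional packing of the pairs $\{x,y\}$ by the vertices that resolve them).

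For (a), assign $g=1$ to an end-vertex of $P_n$ and $0$ elsewhere; this vertex is at pairwise distinct distances from every other vertex, so $g$ is resolving, and $\dim_f(G)\ge 1$ is immediate for any non-trivial $G$. For the converse, if $G$ is not a path then $G$ has a vertex of degree $\ge 3$ or contains a cycle; in either case a short case analysis produces a pair $\{x,y\}$ with $R_G\{x,y\}$ small enough to prevent a resolving function of total weight $1$. For (c), vertex-transitivity of $C_n$ lets me set $m(n)=\min_{x\neq y}|R_{C_n}\{x,y\}|$, which equals $n-1$ for odd $n$ and $n-2$ for even $n$ (determined by analyzing which vertices are equidistant from a given pair on the cycle); then the constant $g\equiv 1/m(n)$ is resolving, and a symmetric dual feasible solution $\lambda_{xy}\equiv 1/m(n)$ (suitably scaled) certifies optimality. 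Parts (d), (e), (f) proceed along the same pattern with the computation specific to the Petersen graph, the wheels, and the bouquets of cycles.

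For (b) I would partition the end-vertices of $T$ according to their nearest exterior major vertex, assign $g=\tfrac{1}{2}$ to every terminal end-vertex whose associated major vertex $u$ satisfies $ter_T(u)\ge 2$, and $g=0$ elsewhere. The total weight equals $\tfrac{1}{2}\sum_{u:\,ter_T(u)\ge 2}ter_T(u)=\tfrac{1}{2}(\sigma(T)-ex_1(T))$, and one checks that this $g$ resolves every pair (non-sibling pairs are resolved by an end-vertex in some other leg; sibling pairs are resolved by themselves). The matching lower bound follows because, for any two terminal end-vertices $\ell_1,\ell_2$ of a common major vertex $u$, the set $R_T\{\ell_1,\ell_2\}$ is confined to the two legs from $u$, forcing $g(\ell_1)+g(\ell_2)\ge 1$ locally; summing these essentially disjoint constraints and invoking LP duality recovers the bound.

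Part (g) is the crux and the main obstacle. The upper bound $\dim_f(G)\le n/2$ is immediate from $g\equiv \tfrac{1}{2}$, since $x,y\in R_G\{x,y\}$ forces $|R_G\{x,y\}|\ge 2$. The reverse direction of the characterization is straightforward: given a fixed-point-free $\phi$ with $R_G\{v,\phi(v)\}=\{v,\phi(v)\}$ for every $v$, the constraints $g(v)+g(\phi(v))\ge 1$ summed over a transversal of the cycles of $\phi$ force $g(V)\ge n/2$. The delicate direction is producing $\phi$ from the mere equality $\dim_f(G)=n/2$: by LP duality, tightness supplies an optimal dual solution---a fractional weighting supported on pairs $\{x,y\}$ with $|R_G\{x,y\}|=2$ and saturating every vertex to value $1$---and this fractional perfect matching on the ``twin-pair'' graph must be promoted to an integral perfect matching via a Birkhoff--von~Neumann or Hall-type argument, yielding the desired $\phi$. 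I expect this extraction step to be the hardest part of the proof.
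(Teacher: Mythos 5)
First, a point of comparison: the paper offers no proof of Theorem~\ref{dim_frac_graph} at all --- every item is quoted from the cited references --- so there is no internal argument to measure your sketch against, and it must stand on its own. Its overall architecture (explicit resolving functions for upper bounds, LP duality and pair constraints for lower bounds) is the standard and correct one, and your treatment of (g) is essentially sound: the tightness argument does force an optimal dual solution supported on pairs with $|R_G\{x,y\}|=2$ that saturates every vertex, and a fixed-point-free $\phi$ can then be extracted. You make that last step harder than it needs to be, though: since twin-ness is an equivalence relation, the ``twin-pair graph'' is a disjoint union of cliques, the saturation condition just says every clique has size at least two, and $\phi$ is obtained by cyclically permuting each class --- no Birkhoff--von Neumann or Hall argument is required.

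The genuine gap is in your lower bound for (b). You assert that for two terminal end-vertices $\ell_1,\ell_2$ of a common exterior major vertex $u$, the set $R_T\{\ell_1,\ell_2\}$ is confined to the two legs from $u$. That is false whenever the legs have unequal lengths: if $d_T(u,\ell_1)\neq d_T(u,\ell_2)$, then every vertex $z$ outside the two legs satisfies $d_T(z,\ell_i)=d_T(z,u)+d_T(u,\ell_i)$ and therefore resolves the pair, so $R_T\{\ell_1,\ell_2\}$ contains almost all of $V(T)$ and the constraint $g(\ell_1)+g(\ell_2)\ge 1$ simply does not follow. The repair is to take, for each pair of legs of $u$, a pair of vertices \emph{equidistant} from $u$ (e.g.\ the two neighbours of $u$ on those legs); for such a pair $R_T\{x,y\}$ genuinely lies in the union of the two legs, and your summation then yields $g(V)\ge\frac12\sum_{ter_T(u)\ge 2}ter_T(u)=\frac12(\sigma(T)-ex_1(T))$. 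Relatedly, your upper-bound function for (b) assigns total weight $0$ when $T$ is a path (no major vertices), so that case must be routed through (a). Two lesser imprecisions: the wheel $W_n$ is not vertex-transitive, so (e) cannot ``proceed along the same pattern'' as (c) and (d) --- one must put weight $0$ on the hub and derive the lower bound from consecutive rim pairs with $|R|=4$; and in the easy direction of (g), summing the constraints over a transversal of the cycles of $\phi$ does not work when $\phi$ has cycles of length greater than two --- sum $g(v)+g(\phi(v))\ge 1$ over all $v\in V$ instead to get $2g(V)\ge n$.
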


In this paper, we obtain some general results on the simultaneous fractional dimension for a family of connected graphs on a common vertex set. We also obtain some results on the simultaneous fractional dimension for a graph and its complement. The paper is organized as follow. In Section~\ref{sec_sdim_frac}, we obtain some general results on the simultaneous fractional dimension of graph families. Let $\mathcal{G}=\{G_1, G_2, \ldots, G_k\}$ be a family of connected graphs on a common vertex set $V$, where $k\ge 2$ and $|V|\ge3$. We observe that $1 \le \max_{1 \le i \le k}\{\dim_f(G_i)\} \le \Sd_f(\mathcal{G}) \le \min\left\{\sum_{i=1}^{k} \dim_f(G_i), \frac{|V|}{2}\right\} \le \frac{|V|}{2}$; we provide a family of graphs $\mathcal{G}$ satisfying $\Sd_f(\mathcal{G})=\max_{1 \le i \le k}\{\dim_f(G_i)\}$ and $\Sd_f(\mathcal{G})=\sum_{i=1}^{k}\dim_f(G_i)=\frac{|V|}{2}$, respectively. We also show that $\Sd_f(\mathcal{G})-\max_{1 \le i \le k}\{\dim_f(G_i)\}$ and $\min\left\{\sum_{i=1}^{k} \dim_f(G_i), \frac{|V|}{2}\right\}-\Sd_f(\mathcal{G})$, respectively, can be arbitrarily large. We characterize $\mathcal{G}$ satisfying $\Sd_f(\mathcal{G})=1$, and we examine $\mathcal{G}$ satisfying $\Sd_f(\mathcal{G})=\frac{|V|}{2}$. We also determine $\Sd_f(\mathcal{G})$ when $\mathcal{G}$ is a family of vertex-transitive graphs. In Section~\ref{sec_sdim_frac_complement}, we show that if $\{\diam(G), \diam(\overline{G})\} \neq \{3\}$ and $\diam(G)\le \diam(\overline{G})$, then $\Sd_f(G, \overline{G})=\dim_f(G)$. We also determine $\Sd_f(G, \overline{G})$ when $G$ is a tree or a unicyclic graph.

\section{General results on simultaneous fractional dimension}\label{sec_sdim_frac}

Throughout this section, let $\mathcal{G}=\{G_1, G_2, \ldots, G_k\}$ be a family of connected graphs with a common vertex set $V$, where $k \ge2$ and $|V| \ge 3$. In this section, we observe that $1\le\max_{1 \le i \le k}\{\dim_f(G_i)\} \le \Sd_f(\mathcal{G}) \le \min\left\{\sum_{i=1}^{k} \dim_f(G_i), \frac{|V|}{2}\right\} \le \frac{|V|}{2}$; we show the existence of $\mathcal{G}$ satisfying $\Sd_f(\mathcal{G})=\max_{1 \le i \le k}\{\dim_f(G_i)\}$ and $\Sd_f(\mathcal{G})=\min\left\{\sum_{i=1}^{k} \dim_f(G_i), \frac{|V|}{2}\right\}$, respectively. We characterize a family of graphs $\mathcal{G}$ satisfying $\Sd_f(\mathcal{G})=1$, and we examine some families of graphs $\mathcal{G}$ with $\Sd_f(\mathcal{G})=\frac{|V|}{2}$. We also show the following: (i) if $\mathcal{G}$ is a family of paths, then $\Sd_f(\mathcal{G})\in \{1, \frac{|V|}{|V|-1}\}$; (ii) if $\mathcal{G}$ is a family of vertex-transitive graphs, then $\Sd_f(\mathcal{G})=\max_{1\le i \le k}\{\dim_f(G_i)\}$; (iii) we apply (ii) to determine $\Sd_f(\mathcal{G})$ when $\mathcal{G}$ is a family of cycles or a family of the Petersen graphs, respectively.

We begin with the following result that is useful in establishing the lower bound of $\Sd_f(\mathcal{G})$. The \emph{open neighborhood} of a vertex $v \in V(G)$ is $N_G(v)=\{u \in V(G) : uv \in E(G)\}$. Two vertices $u, w \in V(G)$ are called \emph{twins} if $N_G(u)-\{w\}=N_G(w)-\{u\}$; notice that a vertex is its own twin. Hernando et al.~\cite{Hernando} observed that the twin relation is an equivalence relation and that an equivalence class under it, called a \emph{twin equivalence class}, induces either a clique or an independent set.

\begin{lemma}\label{obs_twin_frac}
Let $S$ be a twin equivalence class of a graph $G$ with $|S|=\alpha\ge 2$. Let $g:V(G) \rightarrow [0,1]$ be a resolving function for either $G$ or $\overline{G}$. Then $g(S) \ge \frac{\alpha}{2}$.
\end{lemma}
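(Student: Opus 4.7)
The plan is to reduce the conclusion to a sum of pairwise inequalities indexed by pairs in $S$, and then aggregate by a simple averaging argument. The crucial observation is that for any two twins $u, w$ in a graph $H$, one has $R_H\{u, w\} = \{u, w\}$. Clearly $u, w \in R_H\{u, w\}$; for any $z \notin \{u, w\}$, I would show $d_H(u, z) = d_H(w, z)$ by splitting on whether $uw \in E(H)$. The non-adjacent case is immediate since $N_H(u) = N_H(w)$, so shortest $u$-$z$ and $w$-$z$ paths can be chosen with the same first edge. In the adjacent case, the point is that no shortest $w$-$z$ path with $z \neq u$ need begin along the edge $wu$: otherwise its second vertex would lie in $N_H(u) \setminus \{w\} = N_H(w) \setminus \{u\}$, and routing directly from $w$ through that common neighbor would shorten the path by one. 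Hence some shortest $w$-$z$ path begins in $N_H(w) \setminus \{u\} \subseteq N_H(u)$, giving $d_H(u, z) \le d_H(w, z)$, and symmetry closes the inequality.

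Next, I would verify that the twin relation is preserved under complementation. A direct comparison of open neighborhoods shows that if $N_G(u) \setminus \{w\} = N_G(w) \setminus \{u\}$, then also $N_{\overline{G}}(u) \setminus \{w\} = N_{\overline{G}}(w) \setminus \{u\}$, with adjacent twins in $G$ becoming non-adjacent twins in $\overline{G}$ and vice versa; in particular $S$ remains a twin equivalence class in $\overline{G}$. Consequently, whichever of $G$ or $\overline{G}$ the function $g$ resolves, the previous observation applied to that graph $H$ yields $g(u) + g(w) \ge g(R_H\{u, w\}) \ge 1$ for every pair of distinct $u, w \in S$.

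To finish, I would enumerate $S = \{v_1, \ldots, v_\alpha\}$ and sum the $\binom{\alpha}{2}$ pairwise inequalities $g(v_i) + g(v_j) \ge 1$. Each value $g(v_i)$ appears on the left in exactly $\alpha - 1$ of them, so
\[
(\alpha - 1)\, g(S) \;\ge\; \binom{\alpha}{2} \;=\; \frac{\alpha(\alpha-1)}{2},
\]
which rearranges to $g(S) \ge \alpha/2$, as desired. The only subtle point in the whole argument is the adjacent-twin case of the identity $R_H\{u, w\} = \{u, w\}$; the complementation step and the averaging are routine.
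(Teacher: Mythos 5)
Your proof is correct and takes essentially the same route as the paper's: both rest on the identity $R_H\{u,w\}=\{u,w\}$ for twins $u,w$ (which the paper asserts without the adjacency case analysis and complementation check you supply) and then sum the $\binom{\alpha}{2}$ pairwise inequalities $g(u)+g(w)\ge 1$ to obtain $(\alpha-1)g(S)\ge\binom{\alpha}{2}$ and hence $g(S)\ge\frac{\alpha}{2}$.
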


\begin{proof}
Let $S=\{s_1, s_2, \ldots, s_{\alpha}\}$ be a twin equivalence class of $G$, where $\alpha\ge 2$. Let $g:V(G) \rightarrow [0,1]$ be a resolving function for either $G$ or $\overline{G}$. Note that, for any distinct $i, j \in [\alpha]$, $R_G\{s_i, s_j\}=\{s_i, s_j\}=R_{\overline{G}}\{s_i, s_j\}$; thus, $g(s_i)+g(s_j) \ge 1$. By summing over the $\alpha \choose 2$ inequalities, we have $(\alpha-1)g(S) \ge {\alpha \choose 2}$. Therefore, $g(S) \ge \frac{\alpha}{2}$.~\hfill
\end{proof}

We note that the argument in the preceding proof generalizes one direction of part (g) of Theorem~\ref{dim_frac_graph}. Noticing that the constant function $f(v)=\frac{1}{2}$ for each $v\in V(G)$ resolves every graph and that $\dim_f(K_{|V|})=\frac{|V|}{2}$ follows from part (g) of Theorem~\ref{dim_frac_graph} by taking as $\phi$ any fixed-point-free permutation on $V$, we obtain the following bounds for  $\Sd_f(\mathcal{G})$.

\begin{observation}\label{factor_bounds2}
Any family $\mathcal{G}=\{G_1, G_2, \ldots, G_k\}$ of connected graphs on a common vertex set $V$ satisfies 
$$\max_{1 \le i \le k}\{\dim_f(G_i)\} \le \Sd_f(\mathcal{G}) \le \min\left\{\sum_{i=1}^{k} \dim_f(G_i), \frac{|V|}{2}\right\};$$
moreover, the upper bound is achieved if the graph family $\mathcal{G}$ on vertex set $V$ contains the complete graph $K_{|V|}$ as a member.
\end{observation}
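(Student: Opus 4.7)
The plan is to verify the chain of inequalities by combining the definition of a simultaneous resolving function with two standard constructions, and then read off the achievement statement from Theorem~\ref{dim_frac_graph}(g) (or equivalently, Lemma~\ref{obs_twin_frac} with $S=V$).

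The leftmost inequality $\max_{1 \le i \le k}\{\dim_f(G_i)\} \le \Sd_f(\mathcal{G})$ is immediate from definitions: any simultaneous resolving function $h$ of $\mathcal{G}$ is by definition a resolving function of each $G_i$, so $h(V) \ge \dim_f(G_i)$ for every $i\in[k]$, and I take the infimum over $h$.

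For the composite upper bound, I would exhibit two distinct simultaneous resolving functions and keep whichever is smaller on a given instance. To obtain the summand $\sum_i \dim_f(G_i)$, let $g_i$ be an optimal resolving function of $G_i$ and set $h(v)=\min\bigl\{1,\,\sum_{i=1}^k g_i(v)\bigr\}$. The key point is that, although capping is needed to ensure $h$ takes values in $[0,1]$, the inequality $h(z)\ge g_j(z)$ still holds for every $z$ and every $j$ (since $g_j(z)\le 1$), whence
\[
h(R_{G_j}\{x,y\}) \ge g_j(R_{G_j}\{x,y\}) \ge 1
\]
for each $j\in[k]$ and every pair of distinct $x,y\in V$; then $h(V)\le \sum_i g_i(V)=\sum_i \dim_f(G_i)$. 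To obtain the summand $|V|/2$, I would use the constant function $f\equiv \tfrac{1}{2}$ noted in the paragraph preceding the observation: for any $G\in\mathcal{G}$ and distinct $x,y\in V$, one has $\{x,y\}\subseteq R_G\{x,y\}$ (since $d_G(x,x)=0<d_G(x,y)$), so $f(R_G\{x,y\})\ge 1$, giving $\Sd_f(\mathcal{G})\le |V|/2$.

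Finally, for the achievement claim, if $K_{|V|}\in \mathcal{G}$ then Theorem~\ref{dim_frac_graph}(g) applied to any fixed-point-free permutation on $V$ (or Lemma~\ref{obs_twin_frac} applied to $S=V$) yields $\dim_f(K_{|V|})=|V|/2$, and the lower bound then forces $\Sd_f(\mathcal{G})\ge |V|/2$, matching the universal upper bound. Since $\dim_f(G_i)\le |V|/2$ for each $i$ and $\dim_f(K_{|V|})=|V|/2$, the minimum in the upper bound also equals $|V|/2$, so equality holds throughout. None of the steps is really a hurdle; the only subtle point worth checking carefully is the capping construction for the sum bound, where one must verify that replacing $\sum_i g_i$ by $\min\{1,\sum_i g_i\}$ still meets each individual resolving constraint.
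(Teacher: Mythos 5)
Your proposal is correct and follows essentially the same route the paper takes: the paper records this as an observation justified only by noting that the constant function $\frac{1}{2}$ resolves every graph and that $\dim_f(K_{|V|})=\frac{|V|}{2}$ via Theorem~\ref{dim_frac_graph}(g), which is exactly your argument for the $\frac{|V|}{2}$ bound and the achievement claim. Your capped-sum construction $h=\min\{1,\sum_i g_i\}$ correctly supplies the one step the paper leaves implicit (the $\sum_{i=1}^{k}\dim_f(G_i)$ term), and the verification that $h\ge g_j$ pointwise is exactly the right check.
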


\begin{remark}
The following are further examples showing the sharpness of the upper and lower bounds of Observation~\ref{factor_bounds2}. 

For the sharpness of the lower bound, let $\mathcal{G}=\{G_1, G_2, G_3\}$ be the family of graphs given in Figure~\ref{fig_lower_bound}(a) with the common vertex set $V=\cup_{i=1}^{6}\{u_i\}$. Then $\dim_f(G_1)=\frac{3}{2}$ by Theorem~\ref{dim_frac_graph}(c), $\dim_f(G_2)=\frac{3}{2}$ by Theorem~\ref{dim_frac_graph}(e), and $\dim_f(G_3)=1$ by Theorem~\ref{dim_frac_graph}(a); thus, $\Sd_f(\mathcal{G}) \ge \frac{3}{2}$ by Observation~\ref{factor_bounds2}. Let $g: V \rightarrow [0,1]$ be a function defined by $g(u_i)=\frac{1}{4}$ for each $i\in[6]$. Note that, for each $t\in[3]$ and for any distinct $i,j\in [6]$, $|R_{G_t}\{u_i, u_j\}|\ge 4$ and $g(R_{G_t}\{u_i, u_j\}) \ge 4(\frac{1}{4})=1$; thus, $g$ is a simultaneous resolving function of $\mathcal{G}$. So, $\Sd_f(\mathcal{G}) \le g(V)=\frac{3}{2}$. Therefore, $\Sd_f(\mathcal{G})=\frac{3}{2}=\max\{\dim_f(G_1), \dim_f(G_2), \dim_f(G_3)\}$.

For the sharpness of the upper bound, let $\mathcal{H}=\{H_1, H_2, H_3\}$ be the family of graphs given in Figure~\ref{fig_lower_bound}(b) with the common vertex set $V=\cup_{i=1}^{6}\{u_i\}$. Note that $\dim_f(H_1)=\frac{3}{2}$ and $\dim_f(H_3)=2$ by Theorem~\ref{dim_frac_graph}(b), and $\dim_f(H_2)=2$ by Theorem~\ref{dim_frac_graph}(f). Let $h:V \rightarrow [0,1]$ be any simultaneous resolving function of $\mathcal{H}$. Than we have the following: (i) $h(u_2)+h(u_6)\ge 1$ since $u_2$ and $u_6$ are twins in $H_1$; (ii) $h(u_4)+h(u_5)\ge 1$ since $u_4$ and $u_5$ are twins in $H_2$; (iii) $h(u_1)+h(u_3)\ge 1$ since $u_1$ and $u_3$ are twins in $H_3$. By summing over the three inequalities, we have $h(V)\ge 3=\frac{|V|}{2}$; thus, $\Sd_f(\mathcal{H}) \ge 3$. By Observation~\ref{factor_bounds2}, we have $ \Sd_f(\mathcal{H}) =3= \min\{\sum_{i=1}^{3} \dim_f(H_i), \frac{|V|}{2}\}$.
 \end{remark}

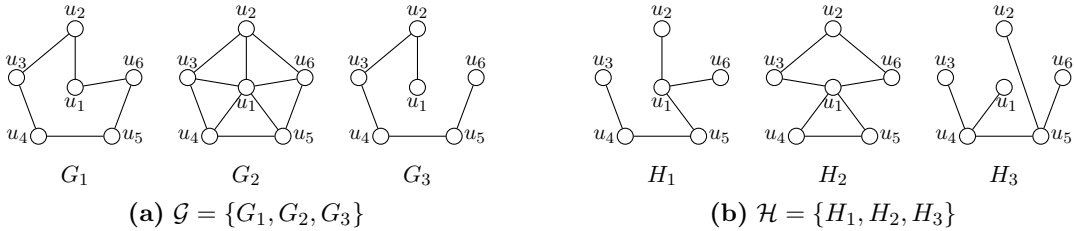
\begin{figure}[ht]
\centering
\begin{tikzpicture}[scale=.65, transform shape]

\node [draw, shape=circle, scale=0.9] (a0) at  (0.5,0) {};
\node [draw, shape=circle, scale=0.9] (a1) at  (0.5,1.2) {};
\node [draw, shape=circle, scale=0.9] (a2) at  (-0.7,0.2) {};
\node [draw, shape=circle, scale=0.9] (a3) at  (-0.25,-1) {};
\node [draw, shape=circle, scale=0.9] (a4) at  (1.25,-1) {};
\node [draw, shape=circle, scale=0.9] (a5) at  (1.7,0.2) {};

\draw(a0)--(a1)--(a2)--(a3)--(a4)--(a5)--(a0);

\node [scale=1.15] at (0.5,-0.35) {$u_1$};
\node [scale=1.15] at (0.5,1.55) {$u_2$};
\node [scale=1.15] at (-0.7,0.53) {$u_3$};
\node [scale=1.15] at (-0.68,-1) {$u_4$};
\node [scale=1.15] at (1.68,-1) {$u_5$};
\node [scale=1.15] at (1.7,0.53) {$u_6$};

\node [draw, shape=circle, scale=0.9] (b0) at  (4,0) {};
\node [draw, shape=circle, scale=0.9] (b1) at  (4,1.2) {};
\node [draw, shape=circle, scale=0.9] (b2) at  (2.8,0.2) {};
\node [draw, shape=circle, scale=0.9] (b3) at  (3.25,-1) {};
\node [draw, shape=circle, scale=0.9] (b4) at  (4.75,-1) {};
\node [draw, shape=circle, scale=0.9] (b5) at  (5.2,0.2) {};

\draw(b0)--(b1)--(b2)--(b3)--(b4)--(b5)--(b1);\draw(b2)--(b0)--(b3);\draw(b4)--(b0)--(b5);

\node [scale=1.15] at (4,-0.36) {$u_1$};
\node [scale=1.15] at (4,1.55) {$u_2$};
\node [scale=1.15] at (2.8,0.53) {$u_3$};
\node [scale=1.15] at (2.82,-1) {$u_4$};
\node [scale=1.15] at (5.18,-1) {$u_5$};
\node [scale=1.15] at (5.2,0.53) {$u_6$};

\node [draw, shape=circle, scale=0.9] (c0) at  (7.5,0) {};
\node [draw, shape=circle, scale=0.9] (c1) at  (7.5,1.2) {};
\node [draw, shape=circle, scale=0.9] (c2) at  (6.3,0.2) {};
\node [draw, shape=circle, scale=0.9] (c3) at  (6.75,-1) {};
\node [draw, shape=circle, scale=0.9] (c4) at  (8.25,-1) {};
\node [draw, shape=circle, scale=0.9] (c5) at  (8.7,0.2) {};

\draw(c0)--(c1)--(c2)--(c3)--(c4)--(c5);

\node [scale=1.15] at (7.5,-0.35) {$u_1$};
\node [scale=1.15] at (7.5,1.55) {$u_2$};
\node [scale=1.15] at (6.3,0.53) {$u_3$};
\node [scale=1.15] at (6.32,-1) {$u_4$};
\node [scale=1.15] at (8.68,-1) {$u_5$};
\node [scale=1.15] at (8.7,0.53) {$u_6$};

\node [scale=1.25] at (0.5,-1.8) {$G_1$};
\node [scale=1.25] at (4,-1.8) {$G_2$};
\node [scale=1.25] at (7.5,-1.8) {$G_3$};

\node [draw, shape=circle, scale=0.9] (d0) at  (12.5,0) {};
\node [draw, shape=circle, scale=0.9] (d1) at  (12.5,1.2) {};
\node [draw, shape=circle, scale=0.9] (d2) at  (11.3,0.2) {};
\node [draw, shape=circle, scale=0.9] (d3) at  (11.75,-1) {};
\node [draw, shape=circle, scale=0.9] (d4) at  (13.25,-1) {};
\node [draw, shape=circle, scale=0.9] (d5) at  (13.7,0.2) {};

\draw(d0)--(d4)--(d3)--(d2);\draw(d1)--(d0)--(d5);

\node [scale=1.15] at (12.5,-0.35) {$u_1$};
\node [scale=1.15] at (12.5,1.55) {$u_2$};
\node [scale=1.15] at (11.3,0.53) {$u_3$};
\node [scale=1.15] at (11.32,-1) {$u_4$};
\node [scale=1.15] at (13.68,-1) {$u_5$};
\node [scale=1.15] at (13.7,0.53) {$u_6$};

\node [draw, shape=circle, scale=0.9] (e0) at  (16,0) {};
\node [draw, shape=circle, scale=0.9] (e1) at  (16,1.2) {};
\node [draw, shape=circle, scale=0.9] (e2) at  (14.8,0.2) {};
\node [draw, shape=circle, scale=0.9] (e3) at  (15.25,-1) {};
\node [draw, shape=circle, scale=0.9] (e4) at  (16.75,-1) {};
\node [draw, shape=circle, scale=0.9] (e5) at  (17.2,0.2) {};

\draw(e0)--(e2)--(e1)--(e5)--(e0);\draw(e0)--(e3)--(e4)--(e0);

\node [scale=1.15] at (16,-0.36) {$u_1$};
\node [scale=1.15] at (16,1.55) {$u_2$};
\node [scale=1.15] at (14.8,0.53) {$u_3$};
\node [scale=1.15] at (14.82,-1) {$u_4$};
\node [scale=1.15] at (17.18,-1) {$u_5$};
\node [scale=1.15] at (17.2,0.53) {$u_6$};

\node [draw, shape=circle, scale=0.9] (f0) at  (19.5,0) {};
\node [draw, shape=circle, scale=0.9] (f1) at  (19.5,1.2) {};
\node [draw, shape=circle, scale=0.9] (f2) at  (18.3,0.2) {};
\node [draw, shape=circle, scale=0.9] (f3) at  (18.75,-1) {};
\node [draw, shape=circle, scale=0.9] (f4) at  (20.25,-1) {};
\node [draw, shape=circle, scale=0.9] (f5) at  (20.7,0.2) {};

\draw(f0)--(f3)--(f2);\draw(f1)--(f4)--(f5);\draw(f3)--(f4);

\node [scale=1.15] at (19.55,-0.35) {$u_1$};
\node [scale=1.15] at (19.5,1.55) {$u_2$};
\node [scale=1.15] at (18.3,0.53) {$u_3$};
\node [scale=1.15] at (18.32,-1) {$u_4$};
\node [scale=1.15] at (20.68,-1) {$u_5$};
\node [scale=1.15] at (20.7,0.53) {$u_6$};

\node [scale=1.25] at (12.5,-1.8) {$H_1$};
\node [scale=1.25] at (16,-1.8) {$H_2$};
\node [scale=1.25] at (19.5,-1.8) {$H_3$};

\node [scale=1.35] at (4,-2.6) {\textbf{(a)} $\mathcal{G}=\{G_1, G_2, G_3\}$};
\node [scale=1.35] at (16,-2.6) {\textbf{(b)} $\mathcal{H}=\{H_1, H_2, H_3\}$};

\end{tikzpicture}
\caption{\small{(a) A family $\mathcal{G}=\{G_1, G_2, G_3\}$ with $\Sd_f(\mathcal{G})=\max_{1 \le i \le 3}\{\dim_f(G_i)\}$; (b) A family $\mathcal{H}=\{H_1, H_2, H_3\}$ with a common vertex set $V$ such that $\Sd_f(\mathcal{H})=\min\{\sum_{i=1}^3\dim_f(H_i), \frac{|V|}{2}\}$.}}\label{fig_lower_bound}
\end{figure}

Next, we characterize a family of connected graphs $\mathcal{G}$ satisfying $\Sd_f(\mathcal{G})=1$. We recall the following result.

\begin{theorem}\emph{\cite{juan}}\label{thm_bounds}
Let $\mathcal{G}$ be a family of connected graphs on a common vertex set $V$. Then
\begin{itemize}
\item[(a)] $\Sd(\mathcal{G})=1$ if and only if $\mathcal{G}$ is a family of paths that share a common end-vertex.
\item[(b)] $\Sd(\mathcal{G})=|V|-1$ if and only if, for every pair of distinct $x,y\in V$, there exists a graph $G\in \mathcal{G}$ such that $x$ and $y$ are twins in $G$.
\end{itemize}
\end{theorem}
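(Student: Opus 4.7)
The plan is to handle each part of Theorem~\ref{thm_bounds} separately, leaning on the characterization $\dim(G)=1 \iff G=P_n$ from Theorem~\ref{dim_frac_graph}(a) and on the observation that a pair of twin vertices is resolved only by one of themselves. For (a), I first verify the easy direction: if every $G_i$ is a path sharing a common end-vertex $v_0$, then within each $G_i$ the distances from $v_0$ realize all values $0,1,\ldots,|V|-1$, so $\{v_0\}$ resolves $G_i$, making it a simultaneous resolving set and giving $\Sd(\mathcal{G})=1$. For the converse, assume $\{v\}$ is a simultaneous resolving set; then each $G_i$ has $\dim(G_i)=1$, which forces $G_i=P_{|V|}$, and an interior vertex of a path fails to distinguish its two neighbors, so $v$ must be an end-vertex of each $G_i$.

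For (b), the ``if'' direction is straightforward: if each pair $\{x,y\}$ is a twin pair in some $G\in\mathcal{G}$, then any $S\subseteq V$ with $|S|\le|V|-2$ misses two vertices $a,b$, and in the graph where they are twins we have $R_G\{a,b\}=\{a,b\}$ disjoint from $S$, so $S$ is not resolving. This yields $\Sd(\mathcal{G})\ge|V|-1$, and the matching upper bound follows because $V\setminus\{v\}$ resolves every connected graph (any pair $\{u,w\}$ has an endpoint in $V\setminus\{v\}$ that distinguishes itself from the other via distance $0$). For the ``only if'' direction I argue by contrapositive: if some pair $\{x,y\}$ is not a twin pair in any $G\in\mathcal{G}$, then $S=V\setminus\{x,y\}$ is a simultaneous resolving set of size $|V|-2$, since every pair $\{u,w\}\neq\{x,y\}$ has an endpoint in $S$, and the pair $\{x,y\}$ itself is resolved in each $G\in\mathcal{G}$ by some vertex in $V\setminus\{x,y\}=S$.

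The main obstacle is the converse of (b): the crucial observation is that removing exactly the offending non-twin pair produces a working resolving set, which handles the pair $\{x,y\}$ and every other pair in one stroke. Once the characterization of $\dim(G)=1$ and the twin observation are invoked, everything else reduces to bookkeeping.
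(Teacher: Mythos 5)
The paper does not actually prove Theorem~\ref{thm_bounds}; it quotes the statement from~\cite{juan} as a known result, so there is no in-paper argument to measure yours against. Judged on its own merits, your proof is correct and complete. In (a), the forward direction correctly reduces to $\dim(G_i)=1$ for each member, forces each $G_i$ to be a path, and rules out interior vertices because an interior vertex is equidistant from its two neighbors; note only that you attribute the characterization to Theorem~\ref{dim_frac_graph}(a), which is the \emph{fractional} statement $\dim_f(G)=1\iff G=P_n$ --- the deduction still goes through since $1\le\dim_f(G)\le\dim(G)$, but the classical integer-dimension version is what you really want to cite. In (b), the three facts you lean on are all correctly deployed: twins $a,b$ satisfy $R_G\{a,b\}=\{a,b\}$, so any set omitting both fails; $V\setminus\{v\}$ resolves every connected graph via the distance-zero trick; and if $x,y$ are not twins in $G$, then the witness $z\in N_G(x)\setminus(N_G(y)\cup\{y\})$ (or symmetrically) lies outside $\{x,y\}$ and resolves the pair, so $V\setminus\{x,y\}$ is a simultaneous resolving set. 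The contrapositive for the converse of (b) is the standard route and works in one stroke, as you say.
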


\begin{theorem}\label{sdim_frac_path}
Let $\mathcal{G}$ be a family of connected graphs on a common vertex set. Then $\Sd_f(\mathcal{G})=1$ if and only if $\mathcal{G}$ is a collection of paths that share a common end-vertex.
\end{theorem}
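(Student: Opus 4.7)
My plan is to establish both directions of the biconditional. For the easy direction, assume each $G_i$ is a path on the common vertex set $V$ and that the graphs in $\mathcal{G}$ share a common end-vertex $v$. I will exhibit the explicit simultaneous resolving function $h$ defined by $h(v)=1$ and $h(w)=0$ for $w\ne v$: in any path, the distances from an end-vertex to the remaining vertices form the pairwise distinct sequence $0,1,\ldots,|V|-1$, so $v$ resolves every pair of distinct vertices in every $G_i$, giving $h(R_{G_i}\{x,y\})\ge h(v)=1$. Thus $\Sd_f(\mathcal{G})\le 1$, and the matching lower bound $\Sd_f(\mathcal{G})\ge 1$ is provided by Observation~\ref{factor_bounds2}.

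For the converse, assume $\Sd_f(\mathcal{G})=1$. I will first note that the set of simultaneous resolving functions is compact in $[0,1]^V$, being carved out by finitely many closed inequalities of the form $h(R_G\{x,y\})\ge 1$; hence an optimal $h$ with $h(V)=1$ exists. By the lower-bound part of Observation~\ref{factor_bounds2}, $\dim_f(G_i)\le 1$, and since the same observation gives $\dim_f(G_i)\ge 1$, we have $\dim_f(G_i)=1$; Theorem~\ref{dim_frac_graph}(a) then forces each $G_i$ to be a Hamiltonian path on $V$.

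The crux is to pin down the support of $h$ on each $G_i$. I will use the following observation: for any interior vertex $v$ of the path $G_i$, with path-neighbors $u$ and $w$, a routine distance computation on a path shows that $v$ is the unique vertex equidistant from $u$ and $w$, so $R_{G_i}\{u,w\}=V\setminus\{v\}$. Combining the constraint $h(R_{G_i}\{u,w\})\ge 1$ with $h(V)=1$ then forces $h(v)=0$. Consequently, for each $i$ the support of $h$ is contained in the two end-vertices $\{a_i,b_i\}$ of $G_i$, so $\mathrm{supp}(h)\subseteq\bigcap_{i=1}^{k}\{a_i,b_i\}$. Since $h(V)=1$, the support is nonempty, so this intersection is nonempty, yielding a common end-vertex of every graph in $\mathcal{G}$.

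The main obstacle is the support-localization step: one must recognize that the equidistant set of two path-neighbors in a Hamiltonian path of $V$ is exactly the singleton containing the interior vertex between them, so that the optimality of $h$ kills all interior weights. Once this is in hand, the proof reduces to assembling the trivial lower bound, the characterization $\dim_f=1\Leftrightarrow P_n$, and a one-line intersection argument across the family.
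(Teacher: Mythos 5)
Your proof is correct, and while it turns on the same key computation as the paper's --- namely that for an interior vertex $v$ of a path with neighbors $u,w$ one has $R\{u,w\}=V\setminus\{v\}$, hence $h(V)-h(v)\ge 1$ --- you exploit it differently. For ($\Leftarrow$) the paper simply cites the integer result $\Sd(\mathcal{G})=1$ (Theorem~\ref{thm_bounds}(a)) together with $\Sd_f\le\Sd$, whereas you exhibit the explicit weight-one function concentrated at the common end-vertex; yours is more self-contained, the paper's is shorter. For ($\Rightarrow$) the paper argues by contradiction with an averaging step: if no common end-vertex exists, every vertex is interior in some member, and summing the $n$ inequalities $h(V)-h(u_i)\ge1$ gives $h(V)\ge\frac{n}{n-1}>1$ for \emph{every} simultaneous resolving function, so attainment of the minimum is never needed. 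You instead fix an optimal $h$ with $h(V)=1$ (correctly justifying its existence by compactness of the feasible region in $[0,1]^V$), deduce $h(v)=0$ at every interior vertex of every $G_i$, and conclude that the nonempty support of $h$ lies in $\bigcap_i\{a_i,b_i\}$. Your support-localization argument is direct rather than by contradiction and gives slightly more information (where an optimal function must live), at the modest cost of having to invoke attainment of the minimum; the paper's version yields as a by-product the quantitative bound $\Sd_f(\mathcal{G})\ge\frac{|V|}{|V|-1}$ in the no-common-end-vertex case, which is then reused in Corollary~\ref{cor_path_new}.
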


\begin{proof}
Let $\mathcal{G}=\{G_1, G_2, \ldots, G_k\}$ be a family of connected graphs on a common vertex set $V$.

($\Leftarrow$) Let $\mathcal{G}$ be a family of paths that share a common end-vertex. Since $1\le \Sd_f(\mathcal{G}) \le \Sd(\mathcal{G})$, Theorem~\ref{thm_bounds}(a) implies that $\Sd_f(\mathcal{G})=1$.

($\Rightarrow$) Let $\Sd_f(\mathcal{G})=1$; then $\dim_f(G_i)=1$ for each $i\in[k]$. By Theorem~\ref{dim_frac_graph}(a), $G_i$ must be a path for each $i\in[k]$. Suppose that there is no common end-vertex for all paths in $\mathcal{G}$ with $V=\{u_1, u_2, \ldots, u_n\}$, where $n\ge 3$. Then, for each $u_i\in V$, we can choose a graph $G_j \in \mathcal{G}$ such that $\deg_{G_j}(u_i)=2$, where $i\in[n]$ and $j=j(i)\in[k]$. Let $h: V \rightarrow [0,1]$ be any simultaneous resolving function of $\mathcal{G}$. Since $R_{G_j}(N(u_i))=V-\{u_i\}$, we have $h(V)-h(u_i) \ge 1$ for each $i\in[n]$. By summing over the $n$ inequalities, we have $(n-1)h(V)\ge n$, i.e., $h(V) \ge \frac{n}{n-1}$; thus, $\Sd_f(\mathcal{G})\ge \frac{|V|}{|V|-1}>1$, contradicting the assumption that $\Sd_f(\mathcal{G})=1$. So, $\mathcal{G}$ must be a collection of paths which share a common end-vertex.~\hfill
\end{proof}

Based on Theorem~\ref{sdim_frac_path} and its proof, together with the fact that $h: V\rightarrow [0,1]$ defined by $h(u)=\frac{1}{|V|-1}$ for each $u\in V$ forms a simultaneous resolving function for a family of paths, we have the following.

\begin{corollary}\label{cor_path_new}
Let $\mathcal{G}$ be a family of paths on a common vertex set $V$. Then 
\begin{equation*}
\Sd_f(\mathcal{G})=\left\{
\begin{array}{ll}
1 & \mbox{if all members of $\mathcal{G}$ share a common end-vertex},\\
\frac{|V|}{|V|-1} & \mbox{otherwise}.
\end{array}
\right.
\end{equation*}
\end{corollary}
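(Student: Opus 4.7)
The plan is to derive the two cases essentially for free from Theorem~\ref{sdim_frac_path} together with one short verification that the uniform function $h \equiv \frac{1}{|V|-1}$ resolves every path on $V$ simultaneously.

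First, I would handle the case when all members of $\mathcal{G}$ share a common end-vertex: this is precisely the sufficient condition in Theorem~\ref{sdim_frac_path}, which yields $\Sd_f(\mathcal{G})=1$ immediately. For the complementary case in which the paths in $\mathcal{G}$ do not share a common end-vertex, the proof of Theorem~\ref{sdim_frac_path} has already produced the lower bound: using the fact that for each vertex $u_i\in V$ some path $G_{j(i)}\in\mathcal{G}$ has $\deg_{G_{j(i)}}(u_i)=2$, one obtains $h(V)-h(u_i)\ge 1$ for every $i$, and summing over $i\in[n]$ gives $\Sd_f(\mathcal{G})\ge \frac{|V|}{|V|-1}$.

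It remains to furnish the matching upper bound by exhibiting a simultaneous resolving function of weight $\frac{|V|}{|V|-1}$. I would take $h\colon V\to[0,1]$ with $h(u)=\frac{1}{|V|-1}$ for every $u\in V$, so $h(V)=\frac{|V|}{|V|-1}$. To see that $h$ is a simultaneous resolving function, fix an arbitrary path $G\in\mathcal{G}$, say with vertex ordering $v_1v_2\cdots v_n$, and any two distinct vertices $v_i,v_j$ with $i<j$. A vertex $v_k$ fails to resolve the pair iff $|k-i|=|k-j|$, which forces $i+j$ to be even and $k=(i+j)/2$; hence at most one vertex of $G$ fails to resolve $\{v_i,v_j\}$, so $|R_G\{v_i,v_j\}|\ge |V|-1$ and therefore
\[
h(R_G\{v_i,v_j\})\ \ge\ (|V|-1)\cdot\frac{1}{|V|-1}\ =\ 1.
\]
Since $G\in\mathcal{G}$ was arbitrary, $h$ is a simultaneous resolving function, giving $\Sd_f(\mathcal{G})\le \frac{|V|}{|V|-1}$.

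Combining the inequalities yields $\Sd_f(\mathcal{G})=\frac{|V|}{|V|-1}$ in the non-common-end-vertex case, completing the proof. The only step requiring any genuine argument is the observation that in any path at most one vertex fails to resolve a given pair; everything else is bookkeeping from Theorem~\ref{sdim_frac_path} and its proof, so I do not anticipate any real obstacle.
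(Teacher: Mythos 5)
Your proposal is correct and follows essentially the same route as the paper: the common-end-vertex case is immediate from Theorem~\ref{sdim_frac_path}, the lower bound $\frac{|V|}{|V|-1}$ is extracted from that theorem's proof exactly as you describe, and the upper bound comes from the uniform function $h\equiv\frac{1}{|V|-1}$. The only difference is that you spell out the (correct) one-line verification that at most one vertex of a path fails to resolve a given pair, which the paper simply asserts as a known fact.
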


Next, we provide three families $\mathcal{G}=\{G_1, \ldots, G_k\}$ of connected graphs on a common vertex set $V$ with $K_{|V|}\notin \mathcal{G}$.

\begin{example}\label{sdim_frac_n2}
\begin{itemize}
\item[(a)] There exists $\mathcal{G}\!=\!\{G_1, \ldots, G_k\}$ satisfying $\Sd_f(\mathcal{G})=\sum_{i=1}^{k}\dim_f(G_i)=\frac{|V|}{2}$.
\item[(b)] There exists $\mathcal{G}=\{G_1, \ldots, G_k\}$ with $\Sd_f(\mathcal{G})=\frac{|V|}{2}$ such that each vertex in $V$ belongs to a twin equivalence class of cardinality at least two in some $G_i$, where $i\in[k]$.
\item[(c)] There exists $\mathcal{G}=\{G_1, \ldots, G_k\}$ with $\Sd_f(\mathcal{G})<\frac{|V|}{2}$ such that each vertex in $V$ belongs to a twin equivalence class of cardinality at least two in some $G_i$, where $i\in[k]$.
\end{itemize}
\end{example}

\begin{proof}
Let $g:V \rightarrow [0,1]$ be any simultaneous resolving function of $\mathcal{G}$.

(a) Let $\mathcal{G}=\{G_1, G_2, G_3\}$ be the family of trees in Figure~\ref{fig_upperbound1}. By Theorem~\ref{dim_frac_graph}(b),  $\dim_f(G_i)=2$ for each $i\in[3]$. By Lemma~\ref{obs_twin_frac}, we have the following: (i) $g(u_1)+g(u_2)\ge 1$ and $g(u_7)+g(u_8)\ge 1$ since $\{u_1, u_2\}$ and $\{u_7, u_8\}$ are twin equivalence classes in $G_1$; (ii) $g(u_3)+g(u_4)\ge 1$ and $g(u_{11})+g(u_{12})\ge 1$ since $\{u_3, u_4\}$ and $\{u_{11}, u_{12}\}$ are twin equivalence classes in $G_2$; (iii) $g(u_5)+g(u_6)\ge 1$ and $g(u_9)+g(u_{10})\ge 1$ since $\{u_5, u_6\}$ and $\{u_9, u_{10}\}$ are twin equivalence classes in $G_3$. By summing over the six inequalities, we have $g(V)\ge 6$; thus, $\Sd_f(\mathcal{G})\ge 6$. Since $\Sd_f(\mathcal{G})\le 6=\frac{|V|}{2}$ by Observation~\ref{factor_bounds2}, $\Sd_f(\mathcal{G})=6=\frac{|V|}{2}=\sum_{i=1}^{3}\dim_f(G_i)$.

(b) Let $\mathcal{G}=\{G_1, G_2, G_3, G_4, G_5\}$ be the family of trees in Figure~\ref{fig_upperbound2}. By Lemma~\ref{obs_twin_frac}, we have the following: (i) $g(u_1)+g(u_2)\ge 1$ since $\{u_1,u_2\}$ is a twin equivalence class in $G_1$; (ii) $g(u_2)+g(u_3)\ge 1$ since $\{u_2,u_3\}$ is a twin equivalence class in $G_2$; (iii) $g(u_3)+g(u_4)\ge 1$ since $\{u_3,u_4\}$ is a twin equivalence class in $G_3$; (iv) $g(u_4)+g(u_5)\ge 1$ since $\{u_4,u_5\}$ is a twin equivalence class in $G_4$; (v) $g(u_5)+g(u_1)\ge 1$ since $\{u_5,u_1\}$ is a twin equivalence class in $G_5$. By summing over the five inequalities, we have $2g(V)\ge 5$, i.e., $g(V) \ge \frac{5}{2}$; thus, $\Sd_f(\mathcal{G}) \ge \frac{5}{2}=\frac{|V|}{2}$. Since $\Sd_f(\mathcal{G})\le \frac{5}{2}=\frac{|V|}{2}$ by Observation~\ref{factor_bounds2}, $\Sd_f(\mathcal{G})=\frac{|V|}{2}$.

(c) Let $\mathcal{G}=\{G_1, G_2, G_4\}$ be the family of trees in Figure~\ref{fig_upperbound2}. Then $\{u_1,u_2\}$ is a twin equivalence class in $G_1$, $\{u_2,u_3\}$ is a twin equivalence class in $G_2$, and $\{u_4,u_5\}$ is a twin equivalence class in $G_4$; thus, each vertex in $V$ belongs to a twin equivalence class of cardinality at least two in some $G_i$, where $i\in\{1,2,4\}$. We note that, by Lemma~\ref{obs_twin_frac},  $g(u_1)+g(u_2)\ge 1$ and $g(u_4)+g(u_5)\ge 1$, and thus $g(V) \ge 2$. So, $\Sd_f(\mathcal{G})\ge 2$. Now, let $h:V \rightarrow[0,1]$ be a function defined by $h(u_2)=1$, $h(u_4)=\frac{1}{2}=h(u_5)$, and $h(u_1)=0=h(u_3)$. Then $h$ forms a simultaneous resolving function for $\mathcal{G}$ with $h(V)=2$; thus, $\Sd_f(\mathcal{G})\le 2$. Therefore, $\Sd_f(\mathcal{G})=2<\frac{|V|}{2}$.~\hfill
\end{proof}

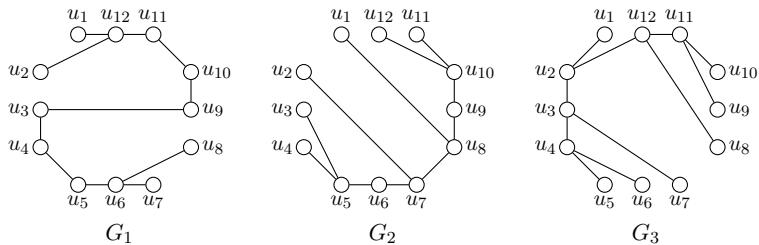
\begin{figure}[ht]
\centering
\begin{tikzpicture}[scale=.5, transform shape]

\node [draw, shape=circle, scale=1.1] (a2) at  (-1,3) {};
\node [draw, shape=circle, scale=1.1] (a3) at  (-1,2) {};
\node [draw, shape=circle, scale=1.1] (a4) at  (-1,1) {};
\node [draw, shape=circle, scale=1.1] (a5) at  (0,0) {};
\node [draw, shape=circle, scale=1.1] (a6) at  (1,0) {};
\node [draw, shape=circle, scale=1.1] (a7) at  (2,0) {};
\node [draw, shape=circle, scale=1.1] (a10) at  (3,3) {};
\node [draw, shape=circle, scale=1.1] (a9) at  (3,2) {};
\node [draw, shape=circle, scale=1.1] (a8) at  (3,1) {};
\node [draw, shape=circle, scale=1.1] (a1) at  (0,4) {};
\node [draw, shape=circle, scale=1.1] (a12) at  (1,4) {};
\node [draw, shape=circle, scale=1.1] (a11) at  (2,4) {};

\draw(a1)--(a12)--(a2);\draw(a12)--(a11)--(a10)--(a9)--(a3)--(a4)--(a5)--(a6);\draw(a7)--(a6)--(a8);

\node [scale=1.5] at (0,4.49) {$u_1$};
\node [scale=1.5] at (-1.57,3) {$u_2$};
\node [scale=1.5] at (-1.57,2) {$u_3$};
\node [scale=1.5] at (-1.57,1) {$u_4$};
\node [scale=1.5] at (0,-0.45) {$u_5$};
\node [scale=1.5] at (1,-0.45) {$u_6$};
\node [scale=1.5] at (2,-0.45) {$u_7$};
\node [scale=1.5] at (3.59,1) {$u_8$};
\node [scale=1.5] at (3.59,2) {$u_9$};
\node [scale=1.5] at (3.69,3) {$u_{10}$};
\node [scale=1.5] at (2,4.49) {$u_{11}$};
\node [scale=1.5] at (1,4.49) {$u_{12}$};

\node [draw, shape=circle, scale=1.1] (b2) at  (6,3) {};
\node [draw, shape=circle, scale=1.1] (b3) at  (6,2) {};
\node [draw, shape=circle, scale=1.1] (b4) at  (6,1) {};
\node [draw, shape=circle, scale=1.1] (b5) at  (7,0) {};
\node [draw, shape=circle, scale=1.1] (b6) at  (8,0) {};
\node [draw, shape=circle, scale=1.1] (b7) at  (9,0) {};
\node [draw, shape=circle, scale=1.1] (b10) at  (10,3) {};
\node [draw, shape=circle, scale=1.1] (b9) at  (10,2) {};
\node [draw, shape=circle, scale=1.1] (b8) at  (10,1) {};
\node [draw, shape=circle, scale=1.1] (b1) at  (7,4) {};
\node [draw, shape=circle, scale=1.1] (b12) at  (8,4) {};
\node [draw, shape=circle, scale=1.1] (b11) at  (9,4) {};

\draw(b3)--(b5)--(b4);\draw(b5)--(b6)--(b7)--(b8)--(b9)--(b10);\draw(b2)--(b7);\draw(b1)--(b8);\draw(b11)--(b10)--(b12);

\node [scale=1.5] at (7,4.49) {$u_1$};
\node [scale=1.5] at (5.43,3) {$u_2$};
\node [scale=1.5] at (5.43,2) {$u_3$};
\node [scale=1.5] at (5.43,1) {$u_4$};
\node [scale=1.5] at (7,-0.45) {$u_5$};
\node [scale=1.5] at (8,-0.45) {$u_6$};
\node [scale=1.5] at (9,-0.45) {$u_7$};
\node [scale=1.5] at (10.59,1) {$u_8$};
\node [scale=1.5] at (10.59,2) {$u_9$};
\node [scale=1.5] at (10.69,3) {$u_{10}$};
\node [scale=1.5] at (9,4.49) {$u_{11}$};
\node [scale=1.5] at (8,4.49) {$u_{12}$};

\node [draw, shape=circle, scale=1.1] (c2) at  (13,3) {};
\node [draw, shape=circle, scale=1.1] (c3) at  (13,2) {};
\node [draw, shape=circle, scale=1.1] (c4) at  (13,1) {};
\node [draw, shape=circle, scale=1.1] (c5) at  (14,0) {};
\node [draw, shape=circle, scale=1.1] (c6) at  (15,0) {};
\node [draw, shape=circle, scale=1.1] (c7) at  (16,0) {};
\node [draw, shape=circle, scale=1.1] (c10) at  (17,3) {};
\node [draw, shape=circle, scale=1.1] (c9) at  (17,2) {};
\node [draw, shape=circle, scale=1.1] (c8) at  (17,1) {};
\node [draw, shape=circle, scale=1.1] (c1) at  (14,4) {};
\node [draw, shape=circle, scale=1.1] (c12) at  (15,4) {};
\node [draw, shape=circle, scale=1.1] (c11) at  (16,4) {};

\draw(c5)--(c4)--(c6);\draw(c4)--(c3)--(c2)--(c12)--(c11);\draw(c3)--(c7);\draw(c1)--(c2);\draw(c8)--(c12);\draw(c9)--(c11)--(c10);

\node [scale=1.5] at (14,4.49) {$u_1$};
\node [scale=1.5] at (12.43,3) {$u_2$};
\node [scale=1.5] at (12.43,2) {$u_3$};
\node [scale=1.5] at (12.43,1) {$u_4$};
\node [scale=1.5] at (14,-0.45) {$u_5$};
\node [scale=1.5] at (15,-0.45) {$u_6$};
\node [scale=1.5] at (16,-0.45) {$u_7$};
\node [scale=1.5] at (17.59,1) {$u_8$};
\node [scale=1.5] at (17.59,2) {$u_9$};
\node [scale=1.5] at (17.69,3) {$u_{10}$};
\node [scale=1.5] at (16,4.49) {$u_{11}$};
\node [scale=1.5] at (15,4.49) {$u_{12}$};

\node [scale=1.6] at (1.1,-1.3) {$G_1$};
\node [scale=1.6] at (8.1,-1.3) {$G_2$};
\node [scale=1.6] at (15.1,-1.3) {$G_3$};
\end{tikzpicture}
\caption{\small{A family $\mathcal{G}=\{G_1, G_2, G_3\}$ on a common vertex set $V$ with $\Sd_f(\mathcal{G})=\sum_{i=1}^{3}\dim_f(G_i)=\frac{|V|}{2}$.}}\label{fig_upperbound1}
\end{figure}

\begin{figure}[ht]
\centering
\begin{tikzpicture}[scale=.65, transform shape]

\node [draw, shape=circle, scale=0.9] (a1) at  (0.5,1.1) {};
\node [draw, shape=circle, scale=0.9] (a2) at  (-0.55,0.3) {};
\node [draw, shape=circle, scale=0.9] (a3) at  (-0.25,-0.8) {};
\node [draw, shape=circle, scale=0.9] (a4) at  (1.25,-0.8) {};
\node [draw, shape=circle, scale=0.9] (a5) at  (1.55,0.3) {};

\draw(a1)--(a3)--(a2);\draw(a3)--(a4)--(a5);

\node [scale=1.15] at (0.5,1.45) {$u_1$};
\node [scale=1.15] at (-0.98,0.3) {$u_2$};
\node [scale=1.15] at (-0.25,-1.2) {$u_3$};
\node [scale=1.15] at (1.25,-1.2) {$u_4$};
\node [scale=1.15] at (1.98,0.3) {$u_5$};

\node [draw, shape=circle, scale=0.9] (b1) at  (4.5,1.1) {};
\node [draw, shape=circle, scale=0.9] (b2) at  (3.45,0.3) {};
\node [draw, shape=circle, scale=0.9] (b3) at  (3.75,-0.8) {};
\node [draw, shape=circle, scale=0.9] (b4) at  (5.25,-0.8) {};
\node [draw, shape=circle, scale=0.9] (b5) at  (5.55,0.3) {};

\draw(b2)--(b1)--(b3);\draw(b1)--(b4)--(b5);

\node [scale=1.15] at (4.5,1.45) {$u_1$};
\node [scale=1.15] at (3.02,0.3) {$u_2$};
\node [scale=1.15] at (3.75,-1.2) {$u_3$};
\node [scale=1.15] at (5.25,-1.2) {$u_4$};
\node [scale=1.15] at (5.98,0.3) {$u_5$};

\node [draw, shape=circle, scale=0.9] (c1) at  (8.5,1.1) {};
\node [draw, shape=circle, scale=0.9] (c2) at  (7.45,0.3) {};
\node [draw, shape=circle, scale=0.9] (c3) at  (7.75,-0.8) {};
\node [draw, shape=circle, scale=0.9] (c4) at  (9.25,-0.8) {};
\node [draw, shape=circle, scale=0.9] (c5) at  (9.55,0.3) {};

\draw(c3)--(c2)--(c4);\draw(c5)--(c1)--(c2);

\node [scale=1.15] at (8.5,1.45) {$u_1$};
\node [scale=1.15] at (7.02,0.3) {$u_2$};
\node [scale=1.15] at (7.75,-1.2) {$u_3$};
\node [scale=1.15] at (9.25,-1.2) {$u_4$};
\node [scale=1.15] at (9.98,0.3) {$u_5$};

\node [draw, shape=circle, scale=0.9] (d1) at  (12.5,1.1) {};
\node [draw, shape=circle, scale=0.9] (d2) at  (11.45,0.3) {};
\node [draw, shape=circle, scale=0.9] (d3) at  (11.75,-0.8) {};
\node [draw, shape=circle, scale=0.9] (d4) at  (13.25,-0.8) {};
\node [draw, shape=circle, scale=0.9] (d5) at  (13.55,0.3) {};

\draw(d4)--(d1)--(d5);\draw(d1)--(d2)--(d3);

\node [scale=1.15] at (12.5,1.45) {$u_1$};
\node [scale=1.15] at (11.02,0.3) {$u_2$};
\node [scale=1.15] at (11.75,-1.2) {$u_3$};
\node [scale=1.15] at (13.25,-1.2) {$u_4$};
\node [scale=1.15] at (13.98,0.3) {$u_5$};

\node [draw, shape=circle, scale=0.9] (e1) at  (16.5,1.1) {};
\node [draw, shape=circle, scale=0.9] (e2) at  (15.45,0.3) {};
\node [draw, shape=circle, scale=0.9] (e3) at  (15.75,-0.8) {};
\node [draw, shape=circle, scale=0.9] (e4) at  (17.25,-0.8) {};
\node [draw, shape=circle, scale=0.9] (e5) at  (17.55,0.3) {};

\draw(e5)--(e4)--(e1);\draw(e2)--(e3)--(e4);

\node [scale=1.15] at (16.5,1.45) {$u_1$};
\node [scale=1.15] at (15.02,0.3) {$u_2$};
\node [scale=1.15] at (15.75,-1.2) {$u_3$};
\node [scale=1.15] at (17.25,-1.2) {$u_4$};
\node [scale=1.15] at (17.98,0.3) {$u_5$};

\node [scale=1.2] at (0.5,-1.8) {$G_1$};
\node [scale=1.2] at (4.5,-1.8) {$G_2$};
\node [scale=1.2] at (8.5,-1.8) {$G_3$};
\node [scale=1.2] at (12.5,-1.8) {$G_4$};
\node [scale=1.2] at (16.5,-1.8) {$G_5$};

\end{tikzpicture}
\caption{\small{Two families $\mathcal{G}=\{G_1, G_2, G_3, G_4, G_5\}$ and $\mathcal{H}=\{G_1, G_2, G_4\}$ of trees on a common vertex set $V$ such that $\Sd_f(\mathcal{G})=\frac{|V|}{2}$ and $\Sd_f(\mathcal{H})<\frac{|V|}{2}$, where each vertex in $V$ belongs to a twin equivalence class of cardinality at least two in some member of the family of graphs.}}\label{fig_upperbound2}
\end{figure}
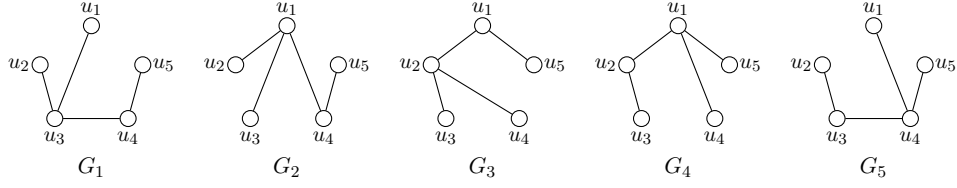

\begin{question}
Let $\mathcal{G}$ be a family of connected graphs on a common vertex set $V$. By Theorem~\ref{thm_bounds}(b) and Lemma~\ref{obs_twin_frac}, one can readily verify that $\Sd(\mathcal{G})=|V|-1$ implies $\Sd_f(\mathcal{G})=\frac{|V|}{2}$. But, as shown in Example~\ref{sdim_frac_n2}(b), there exists $\mathcal{G}$ such that $\Sd_f(\mathcal{G})=\frac{|V|}{2}$ and $\Sd(\mathcal{G})<|V|-1$. Can we characterize $\mathcal{G}$ satisfying $\Sd_f(\mathcal{G})=\frac{|V|}{2}$?
\end{question}

Let us call a twin equivalence class nontrivial if it contains two or more members. Given a graph family $\mathcal{G}=\{G_1, G_2, \ldots, G_k\}$, let $E_i$ denote the set of nontrivial twin equivalence classes of graph $G_i$. We will say a vertex $u\in V$ belongs to $E_i$ if $u$ belongs to a class in $E_i$.  For each $u\in V$, define $m_\mathcal{G}(u)$ to be the number of $E_i$'s to which $u$ belongs. 

 \begin{proposition}
Given $\mathcal{G}=\{G_1, G_2, \ldots, G_k\}$, $\Sd_f(\mathcal{G})=\frac{|V|}{2}$ if the function $m_\mathcal{G}$ is a positive constant function on $V$.  
\end{proposition}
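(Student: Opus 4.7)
The upper bound $\Sd_f(\mathcal{G}) \le \frac{|V|}{2}$ is immediate from Observation~\ref{factor_bounds2}, so the plan is to establish the matching lower bound $\Sd_f(\mathcal{G}) \ge \frac{|V|}{2}$ via a double-counting argument using Lemma~\ref{obs_twin_frac}. Let $g:V \rightarrow [0,1]$ be an arbitrary simultaneous resolving function of $\mathcal{G}$, and set $c = m_\mathcal{G}(u)$ for every $u \in V$, where $c$ is a positive integer by hypothesis.

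For each $i \in [k]$, let $V_i = \bigcup_{S \in E_i} S$ be the union of the nontrivial twin equivalence classes of $G_i$. Since the classes in $E_i$ are pairwise disjoint, Lemma~\ref{obs_twin_frac} applied to each $S \in E_i$ and summed over $S$ yields
\begin{equation*}
g(V_i) \;=\; \sum_{S \in E_i} g(S) \;\ge\; \sum_{S \in E_i} \frac{|S|}{2} \;=\; \frac{|V_i|}{2}.
\end{equation*}
Summing this over $i \in [k]$ and swapping the order of summation gives
\begin{equation*}
\sum_{i=1}^k g(V_i) \;=\; \sum_{u \in V} |\{i : u \in V_i\}|\cdot g(u) \;=\; \sum_{u \in V} m_\mathcal{G}(u)\, g(u) \;=\; c\cdot g(V).
\end{equation*}
By the same swap, $\sum_{i=1}^k |V_i| = \sum_{u \in V} m_\mathcal{G}(u) = c|V|$. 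Combining, we obtain $c\cdot g(V) \ge \frac{c|V|}{2}$, and since $c > 0$ we conclude $g(V) \ge \frac{|V|}{2}$. As $g$ was arbitrary, $\Sd_f(\mathcal{G}) \ge \frac{|V|}{2}$, which with the upper bound gives the equality.

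The whole argument is essentially routine once one notices the double counting; the only mild subtlety is to observe that within a single graph $G_i$ the nontrivial twin equivalence classes are pairwise disjoint (being blocks of an equivalence relation), which justifies summing the lower bounds of Lemma~\ref{obs_twin_frac} without overcounting. No new combinatorial identity is required beyond Lemma~\ref{obs_twin_frac} and Observation~\ref{factor_bounds2}, so I do not anticipate any genuine obstacle.
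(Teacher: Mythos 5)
Your proposal is correct and follows essentially the same route as the paper: apply Lemma~\ref{obs_twin_frac} to the nontrivial twin equivalence classes of each $G_i$, sum, and use the constancy of $m_\mathcal{G}$ to double-count both sides, then invoke Observation~\ref{factor_bounds2} for the upper bound. Your write-up is merely a more explicit version of the paper's argument, spelling out the disjointness of the classes and the interchange of summation.
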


\begin{proof}
Let $m_\mathcal{G}=m\geq 1$ and $h$ be a minimum simultaneous resolving function of $\mathcal{G}$. Then, $h(E_i)\geq \frac{|E_i|}{2}$ by Lemma~\ref{obs_twin_frac}. Summing the $k$ inequalities (one for each $E_i$), while noting that each vertex of $V$ appears $m$ times, we get $mh(V)\geq \frac{m|V|}{2}$; thus, $h(V)\ge\frac{|V|}{2}$. Since $h(V)\le \frac{|V|}{2}$ by Observation~\ref{factor_bounds2}, we have $h(V)=\frac{|V|}{2}$ and hence $\Sd_f(\mathcal{G})=\frac{|V|}{2}$.~\hfill
\end{proof}

Next, we determine $\Sd_f(\mathcal{G})$ when $\mathcal{G}$ is a family of vertex-transitive graphs. Following~\cite{Feng}, let $r(G)=\min\{|R_G\{x,y\}|:x,y\in V(G) \mbox{ and } x\neq y\}$. We recall the following result on the fractional dimension of vertex-transitive graphs. 

\begin{theorem}\emph{\cite{Feng}}\label{Feng_vtransitive}
If $G$ is a vertex-transitive graph, then $\dim_f(G)=\frac{|V(G)|}{r(G)}$. Moreover, $g:V(G) \rightarrow [0,1]$ defined by $g(u)=\frac{1}{r(G)}$, for each $u\in V(G)$, is a minimum resolving function of $G$.
\end{theorem}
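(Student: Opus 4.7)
The plan is to establish the equality by proving both $\dim_f(G) \le |V(G)|/r(G)$ and $\dim_f(G) \ge |V(G)|/r(G)$, with the constant function $g(u) = 1/r(G)$ serving as the extremal example. For the upper bound I would verify directly that this constant $g$ is a resolving function: by the very definition of $r(G)$, every pair of distinct vertices $x,y \in V(G)$ satisfies $|R_G\{x,y\}| \ge r(G)$, and hence $g(R_G\{x,y\}) = |R_G\{x,y\}|/r(G) \ge 1$. Since $g(V(G)) = |V(G)|/r(G)$, this yields the upper bound and exhibits the claimed optimizer.

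For the lower bound I would exploit vertex-transitivity through a symmetrization argument. Let $\Gamma$ denote the automorphism group of $G$ and let $g$ be any resolving function of $G$. Because every $\sigma \in \Gamma$ preserves $d_G$, one has $\sigma(R_G\{x,y\}) = R_G\{\sigma(x), \sigma(y)\}$ for all distinct $x,y$, from which it follows that $g \circ \sigma$ is also a resolving function with $(g \circ \sigma)(V(G)) = g(V(G))$. The averaged function $\tilde g(v) = \frac{1}{|\Gamma|}\sum_{\sigma \in \Gamma} g(\sigma(v))$ is then a convex combination of resolving functions, hence itself a resolving function, with $\tilde g(V(G)) = g(V(G))$. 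Vertex-transitivity forces $\tilde g$ to be constant: for any $u, v \in V(G)$, choosing $\tau \in \Gamma$ with $\tau(u) = v$ and reindexing the sum by $\sigma \mapsto \sigma \tau$ gives $\tilde g(v) = \tilde g(u)$. Writing the common value as $c = g(V(G))/|V(G)|$, I then apply the resolving constraint to a pair $(x^*, y^*)$ achieving $|R_G\{x^*, y^*\}| = r(G)$ to conclude $c \cdot r(G) = \tilde g(R_G\{x^*, y^*\}) \ge 1$, whence $g(V(G)) = c \cdot |V(G)| \ge |V(G)|/r(G)$.

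Combining the two bounds yields $\dim_f(G) = |V(G)|/r(G)$, and the constant function $g(u) = 1/r(G)$ realizes the minimum. I expect the main obstacle to be not any inequality but the careful setup of the symmetrization: verifying the identity $\sigma(R_G\{x,y\}) = R_G\{\sigma(x), \sigma(y)\}$ (a direct distance-preservation check, using that automorphisms preserve all pairwise distances), and noting that the defining inequality $h(R_G\{x,y\}) \ge 1$ is linear in $h$, so convex combinations of resolving functions remain resolving. Once those two observations are in place, the argument reduces to a standard transitive-group averaging, a technique tailored precisely to vertex-transitive settings.
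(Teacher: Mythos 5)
This theorem is quoted from the reference [Feng] (Feng, Lv, and Wang) and the paper itself supplies no proof, so there is nothing internal to compare against. Your argument is correct and complete: the constant function $g\equiv \frac{1}{r(G)}$ resolves every pair by the definition of $r(G)$ (note $r(G)\ge 2$ since $\{x,y\}\subseteq R_G\{x,y\}$, so the values lie in $[0,1]$), and the lower bound via averaging a resolving function over the automorphism group --- using that $\sigma(R_G\{x,y\})=R_G\{\sigma(x),\sigma(y)\}$, that the resolving constraints are linear and hence preserved under convex combination, and that transitivity forces the average to be constant --- is exactly the standard symmetrization proof used in the cited source. No gaps.
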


\begin{proposition}\label{sdim_frac_vtransitive}
Let $\mathcal{G}=\{G_1, G_2, \ldots, G_k\}$ be a family of vertex-transitive graphs with a common vertex set $V$. Then $\Sd_f(\mathcal{G})=\max_{1\le i \le k}\{\dim_f(G_i)\}$.
\end{proposition}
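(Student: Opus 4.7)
The plan is to establish the equality via matching upper and lower bounds. The lower bound $\Sd_f(\mathcal{G})\ge \max_{1\le i \le k}\{\dim_f(G_i)\}$ is immediate from Observation~\ref{factor_bounds2}, so the whole task is to exhibit a simultaneous resolving function whose total weight equals $\max_{1\le i \le k}\{\dim_f(G_i)\}$.

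By Theorem~\ref{Feng_vtransitive}, for each vertex-transitive $G_i$ we have $\dim_f(G_i)=\frac{|V|}{r(G_i)}$, so $\max_{1\le i\le k}\{\dim_f(G_i)\}=\frac{|V|}{r^*}$ where $r^*=\min_{1\le i\le k}\{r(G_i)\}$. The key idea is that the \emph{constant} function from Theorem~\ref{Feng_vtransitive} that resolves the graph with the smallest $r$ automatically resolves all the others, because lifting the constant to a larger value only helps. Concretely, I define $h:V\to[0,1]$ by $h(u)=\tfrac{1}{r^*}$ for every $u\in V$ (which is a legitimate value since $r^*\ge 1$).

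For any $i\in[k]$ and any two distinct $x,y\in V$, the definition of $r(G_i)$ gives $|R_{G_i}\{x,y\}|\ge r(G_i)\ge r^*$, and therefore
\[
h(R_{G_i}\{x,y\})=\frac{|R_{G_i}\{x,y\}|}{r^*}\ge \frac{r^*}{r^*}=1,
\]
so $h$ is a simultaneous resolving function for $\mathcal{G}$. Computing $h(V)=\frac{|V|}{r^*}=\max_{1\le i\le k}\{\dim_f(G_i)\}$ yields the matching upper bound, and the proposition follows.

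There is essentially no obstacle here beyond identifying the right candidate function; the main conceptual point is that vertex-transitivity is strong enough to allow a single uniform weight to serve all $G_i$ at once, so the family's simultaneous fractional dimension collapses to that of its ``hardest'' member. No case analysis or induction is needed, and no further properties of the individual $G_i$ enter the argument beyond what Theorem~\ref{Feng_vtransitive} already supplies.
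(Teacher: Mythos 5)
Your proof is correct and follows essentially the same route as the paper: both establish the lower bound via Observation~\ref{factor_bounds2} and then exhibit the constant function $u\mapsto \frac{1}{r^*}$ (the paper writes this as $\frac{1}{r(G_1)}$ after reordering so that $G_1$ has maximum fractional dimension, which is the same thing) as a simultaneous resolving function of total weight $\frac{|V|}{r^*}=\max_i\{\dim_f(G_i)\}$ using Theorem~\ref{Feng_vtransitive}. No substantive difference.
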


\begin{proof}
By relabeling the members of $\mathcal{G}$ if necessary, let $\dim_f(G_1)\ge \dim_f(G_2) \ge \ldots \ge \dim_f(G_k)$, and let $r=r(G_1)=\min\{|R_{G_1}\{x,y\}|:x,y\in V \mbox{ and } x\neq y\}$. Then $\Sd_f(\mathcal{G}) \ge \dim_f(G_1)$ by Observation~\ref{factor_bounds2}. If $g: V \rightarrow [0,1]$ is a function defined by $g(u)=\frac{1}{r}$ for each $u\in V$, then $g$ is a resolving function for $G_1$ (and thus for $G_i$ for each $i\in[k]$) with $g(V)=\frac{|V|}{r}$. So, $\Sd_f(\mathcal{G}) \le \frac{|V|}{r}=\dim_f(G_1)$ by Theorem~\ref{Feng_vtransitive}. Therefore, $\Sd_f(\mathcal{G})=\dim_f(G_1)=\max_{1\le i \le k}\{\dim_f(G_i)\}$.~\hfill
\end{proof}

Proposition~\ref{sdim_frac_vtransitive}, together with (c) and (d) of Theorem~\ref{dim_frac_graph}, implies the following. 

\begin{corollary}
\hfill{}
\begin{itemize}
\item[(a)] If $\mathcal{G}$ is a family of odd cycles on a common vertex set $V$, then $\Sd_f(\mathcal{G})=\frac{|V|}{|V|-1}$.
\item[(b)] If $\mathcal{G}$ is a family of even cycles on a common vertex set $V$, then $\Sd_f(\mathcal{G})=\frac{|V|}{|V|-2}$. 
\item[(c)] If $\mathcal{G}$ is a family of the Petersen graphs on a common vertex set $V$ with $|V|=10$, then $\Sd_f(\mathcal{G})=\frac{5}{3}$. 
\end{itemize}
\end{corollary}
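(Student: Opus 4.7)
The plan is to invoke Proposition~\ref{sdim_frac_vtransitive} directly, after verifying that each family in question consists of vertex-transitive graphs, and then to read off the common value of $\dim_f$ from parts (c) and (d) of Theorem~\ref{dim_frac_graph}. The observation underlying everything is that if $\mathcal{G}=\{G_1,\dots,G_k\}$ is a family of vertex-transitive graphs all of whose members share the same fractional dimension $\alpha$, then Proposition~\ref{sdim_frac_vtransitive} gives $\Sd_f(\mathcal{G})=\max_{1\le i \le k}\{\dim_f(G_i)\}=\alpha$.

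For part (a), I would first note that every cycle is vertex-transitive, so in particular any family of odd cycles on the common vertex set $V$ consists of vertex-transitive graphs. Every such cycle has exactly $|V|$ vertices, hence by Theorem~\ref{dim_frac_graph}(c) each member $G_i\in\mathcal{G}$ satisfies $\dim_f(G_i)=\frac{|V|}{|V|-1}$. Applying Proposition~\ref{sdim_frac_vtransitive} then yields $\Sd_f(\mathcal{G})=\frac{|V|}{|V|-1}$. Part (b) is identical in structure: the cycles are vertex-transitive, and Theorem~\ref{dim_frac_graph}(c) gives $\dim_f(G_i)=\frac{|V|}{|V|-2}$ for every even cycle on $|V|$ vertices, so Proposition~\ref{sdim_frac_vtransitive} delivers the claimed value.

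For part (c), I would appeal to the well-known fact that the Petersen graph $\mathcal{P}$ is vertex-transitive, and then use Theorem~\ref{dim_frac_graph}(d), which gives $\dim_f(\mathcal{P})=\frac{5}{3}$. Since every member of $\mathcal{G}$ is (a copy of) the Petersen graph on the same $10$-element vertex set $V$, Proposition~\ref{sdim_frac_vtransitive} applies and yields $\Sd_f(\mathcal{G})=\frac{5}{3}$.

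There is no real obstacle here: the corollary is a clean specialization of Proposition~\ref{sdim_frac_vtransitive}. The only subtlety worth flagging is that the members of a family might differ as labeled graphs (different cyclic orderings of $V$, or different embeddings of the Petersen structure on $V$) while all having the same fractional dimension; Proposition~\ref{sdim_frac_vtransitive} is precisely what tells us that the maximum over these equal values is already achievable simultaneously, via the uniform resolving function $g(u)=\frac{1}{r(G_1)}$ guaranteed by Theorem~\ref{Feng_vtransitive}.
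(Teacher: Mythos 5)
Your proposal is correct and matches the paper exactly: the corollary is stated as an immediate consequence of Proposition~\ref{sdim_frac_vtransitive} together with parts (c) and (d) of Theorem~\ref{dim_frac_graph}, which is precisely the route you take. Your added remark about members differing as labeled graphs while sharing the same fractional dimension is a fair clarification but not a departure from the paper's argument.
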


We conclude this section with some observations showing that the gap between $\Sd_f(\mathcal{G})$ and its lower or upper bound can be arbitrarily large. Also, noting that $\Sd(\mathcal{G}) \ge \Sd_f(\mathcal{G})$, we show that $\Sd(\mathcal{G})-\Sd_f(\mathcal{G})$ can be arbitrarily large.

\begin{remark}
(a) There exists a family of graphs $\mathcal{G}=\{G_1, \ldots, G_k\}$ such that $\Sd_f(\mathcal{G})-\max_{1\le i \le k}\{\dim_f(G_i)\}$ can be arbitrarily large. Let $V=\cup_{i=1}^{k}\{u_{i,0}, u_{i,1}, u_{i,2}\}$ with $|V|=3k$, where $k\ge 3$. For each $i\in[k]$, let $G_i$ be a tree with $ex(G_i)=1$ such that $u_{i,0}$ is the exterior major vertex of $G_i$ with $ter_{G_i}(u_{i,0})=3$; for some $j\in[k]$ with $j\neq i$, let $N_{G_i}(u_{i,0})=\{u_{i,1}, u_{i,2}, u_{j,0}\}$ and let $\{u_{i,1}, u_{i,2}, u_{j,2}\}$ be the set of terminal vertices of $u_{i,0}$ in $G_i$. Then $\Sd_f(\mathcal{G}) \ge k$ by Lemma~\ref{obs_twin_frac} since $u_{i,1}$ and $u_{i,2}$ are twins in $G_i$ for each $i\in[k]$, and $\dim_f(G_i)=\frac{3}{2}$, for each $i\in[k]$, by Theorem~\ref{dim_frac_graph}(b). Thus, $\Sd_f(\mathcal{G})-\max_{1\le i \le k}\{\dim_f(G_i)\}\ge k-\frac{3}{2} \rightarrow \infty$ as $k\rightarrow \infty$.

(b) There exists a family of graphs $\mathcal{G}=\{G_1, \ldots, G_k\}$ such that $\min\{\sum_{i=1}^{k}\dim_f(G_i), \frac{|V|}{2}\}-\Sd_f(\mathcal{G})$ can be arbitrarily large. Let $V=\{w_0, w_1, w_2\} \cup \{u_1, u_2, \ldots, u_{k}\}$, where $k\ge 3$. For each $i\in[k]$, let $G_i$ be a tree with $ex(G_i)=1$ such that $w_0$ is the exterior major vertex of $G_i$ with $ter_{G_i}(w_0)=3$; let $N_{G_i}(w_0)=\{w_1, w_2, u_i\}$ and let $w_1, w_2, u_j$ be the terminal vertices of $w_0$ in $G_i$, where $j\in[k]$ and $j \neq i$. First, we show that $\Sd_f(\mathcal{G})=\frac{3}{2}$. Let $h: V \rightarrow [0,1]$ be a function defined by $h(w_1)=h(w_2)=h(u_1)=\frac{1}{2}$ and $h(v)=0$ for each $v\in V-\{w_1, w_2, u_1\}$. Then $h$ is a resolving function of $G_i$ with $h(V)=\frac{3}{2}$ for each $i\in[k]$; thus, $h$ is a simultaneous resolving function for $\mathcal{G}$, and hence $\Sd_f(\mathcal{G}) \le \frac{3}{2}$. Since $\dim_f(G_i)=\frac{3}{2}$ for each $i\in[k]$ by Theorem~\ref{dim_frac_graph}(b), we have $\Sd_f(\mathcal{G})\ge \frac{3}{2}$ by Observation~\ref{factor_bounds2}. Thus, $\Sd_f(\mathcal{G})=\frac{3}{2}$. Second, we show that $\min\{\sum_{i=1}^{k}\dim_f(G_i), \frac{|V|}{2}\}=\frac{k+3}{2}$. By Theorem~\ref{dim_frac_graph}(b), $\dim_f(G_i)=\frac{3}{2}$ for each $i\in[k]$. Since $\frac{|V|}{2}=\frac{k+3}{2}< \frac{3k}{2}=\sum_{i=1}^{k}\dim_f(G_i)$ for $k \ge 3$, $\min\{\sum_{i=1}^{k}\dim_f(G_i), \frac{|V|}{2}\}=\frac{k+3}{2}$. Therefore, $\min\{\sum_{i=1}^{k}\dim_f(G_i), \frac{|V|}{2}\}-\Sd_f(\mathcal{G})=\frac{k}{2} \rightarrow \infty$ as $k \rightarrow \infty$. 

(c) There exists a family of graphs $\mathcal{G}=\{G_1, \ldots, G_k\}$ such that $\Sd(\mathcal{G})-\Sd_f(\mathcal{G})$ can be arbitrarily large. Let $V=\{u_1, u_2, \ldots, u_k\}$ with $|V|=k\ge4$. For each $i\in [k]$, let $G_i$ be the star $K_{1, k-1}$ such that $u_i$ is the central vertex of degree $k-1$ in $G_i$. For any distinct $u_i, u_j\in V$, there exists $G_x\in \mathcal{G}$ with $x\not\in\{i,j\}$ such that $u_i$ and $u_j$ are twins in $G_x$; thus, $\Sd(\mathcal{G})=k-1$ by Theorem~\ref{thm_bounds}(b). Next, we show that $\Sd_f(\mathcal{G})=\frac{k}{2}$. Let $g:V \rightarrow [0,1]$ be any simultaneous resolving function for $\mathcal{G}$. Since $u_{i+1}$ and $u_{i+2}$ are twins in $G_i$ for each $i\in[k-2]$, $u_k$ and $u_1$ are twins in $G_{k-1}$, and $u_1$ and $u_2$ are twins in $G_k$, we have $g(u_1)+g(u_2) \ge 1$, $g(u_{i+1})+g(u_{i+2})\ge 1$ for each $i\in[k-2]$, and $g(u_k)+g(u_1)\ge 1$. By summing over the $k$ inequalities, we have $2g(V)\ge k$; thus, $\Sd_f(\mathcal{G})\ge \frac{k}{2}$. By Observation~\ref{factor_bounds2}, $\Sd_f(\mathcal{G})\le\frac{k}{2}$. Therefore, $\Sd(\mathcal{G})-\Sd_f(\mathcal{G})=k-1-\frac{k}{2}=\frac{k-2}{2} \rightarrow \infty$ as $k \rightarrow \infty$.
\end{remark}

\section{The simultaneous fractional dimension of a graph and its complement}\label{sec_sdim_frac_complement}

In this section, we focus on the simultaneous fractional dimension of a graph and its complement. For any non-trivial graph $G$, notice that $1\le \Sd_f(G, \overline{G}) \le \frac{|V(G)|}{2}$; we characterize graphs $G$ satisfying $\Sd_f(G, \overline{G})=1$ and $\Sd_f(G, \overline{G})=\frac{|V(G)|}{2}$, respectively. We show that if $\{\diam(G), \diam(\overline{G})\}\neq\{3\}$ and $\diam(G)\le \diam(\overline{G})$, then $\Sd_f(G, \overline{G})=\dim_f(G)$. We also determine $\Sd_f(G, \overline{G})$ when $G$ is a tree or a unicyclic graph.

Based on Observation~\ref{factor_bounds2}, we have the following.

\begin{corollary}\label{bounds_Gbar}
For any graph $G$ of order $n \ge 2$, $$1\le \max\{\dim_f(G), \dim_f(\overline{G})\} \le \Sd_f(G, \overline{G}) \le \min\left\{\dim_f(G)+\dim_f(\overline{G}), \frac{n}{2}\right\} \le \frac{n}{2}.$$
\end{corollary}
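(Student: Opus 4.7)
The plan is to derive this statement as an immediate specialization of Observation~\ref{factor_bounds2}. First, I would apply that observation to the two-member family $\mathcal{G} = \{G, \overline{G}\}$ (so $k = 2$), which at once yields the two middle inequalities
$$\max\{\dim_f(G), \dim_f(\overline{G})\} \;\le\; \Sd_f(G, \overline{G}) \;\le\; \min\bigl\{\dim_f(G) + \dim_f(\overline{G}),\, \tfrac{n}{2}\bigr\}.$$
The rightmost inequality $\min\{\dim_f(G) + \dim_f(\overline{G}),\, n/2\} \le n/2$ is immediate from the definition of a minimum and so requires no further argument.

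Second, I would verify the leftmost inequality $1 \le \max\{\dim_f(G), \dim_f(\overline{G})\}$ using the general fact that $\dim_f(H) \ge 1$ for any graph $H$ of order at least two on which resolving functions are defined. If $g$ is any such resolving function and $x \neq y$ are two vertices of $H$, then $g(V(H)) \ge g(R_H\{x,y\}) \ge 1$; taking the infimum over $g$ gives $\dim_f(H) \ge 1$. Applying this to $G$ (or to $\overline{G}$, whichever is in the relevant connected framework) finishes the chain. The main (and essentially only) substantive step is the appeal to Observation~\ref{factor_bounds2}, so no real obstacle is expected; the corollary is a one-line specialization supplemented by the elementary lower bound $\dim_f(\cdot) \ge 1$.
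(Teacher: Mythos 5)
Your proposal matches the paper exactly: the paper derives this corollary as an immediate specialization of Observation~\ref{factor_bounds2} to the two-member family $\{G,\overline{G}\}$, with the outer inequalities being the same routine observations you supply. The only point worth noting is that Observation~\ref{factor_bounds2} is stated for families of \emph{connected} graphs while here one of $G,\overline{G}$ may be disconnected, but the paper glosses over this in the same way (the bounding arguments go through verbatim since $R_H\{x,y\}\supseteq\{x,y\}$ always holds), so your proof is correct and takes essentially the same approach.
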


Now, for any graph $G$ of order $n \ge 2$, we characterize $G$ satisfying $\Sd_f(G, \overline{G})=1$ and $\Sd_f(G, \overline{G})=\frac{n}{2}$, respectively. For an explicit characterization of connected graphs $G$ with $\dim_f(G)=\frac{|V(G)|}{2}$, we refer to~\cite{jian}. We recall the following results.

\begin{observation}\emph{\cite{SdimGbar}}\label{obs_connected}
Graphs $G$ and $\overline{G}$ cannot both be disconnected.
\end{observation}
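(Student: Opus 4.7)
The plan is to prove the contrapositive: assume $G$ is disconnected and show that $\overline{G}$ is connected; this immediately rules out the possibility that both $G$ and $\overline{G}$ are disconnected. So suppose $G$ has at least two connected components $C_1, C_2, \ldots, C_r$ with $r \ge 2$. I will show that any two distinct vertices $u, v \in V(G)$ are joined by a path of length at most $2$ in $\overline{G}$.

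I split into two cases based on whether $u$ and $v$ lie in the same component of $G$. If $u \in C_i$ and $v \in C_j$ with $i \ne j$, then $uv \notin E(G)$, whence $uv \in E(\overline{G})$, so $u$ and $v$ are adjacent in $\overline{G}$. If instead $u, v \in C_i$ for some $i$, I use the hypothesis $r \ge 2$ to pick any vertex $w \in C_j$ for some $j \ne i$; since $w$ lies in a different component of $G$ from both $u$ and $v$, neither $uw$ nor $vw$ is an edge of $G$, hence both are edges of $\overline{G}$, producing the path $u\,w\,v$ in $\overline{G}$.

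In either case $u$ and $v$ are connected in $\overline{G}$, so $\overline{G}$ is connected, completing the contrapositive. There is no real obstacle here beyond being careful with the case split; the argument is a standard two-line observation and no fractional-dimension machinery is required.
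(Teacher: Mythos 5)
Your proof is correct and is the standard argument: if $G$ is disconnected then any two vertices of $\overline{G}$ are at distance at most $2$ (adjacent if they lie in different components of $G$, and otherwise joined through any vertex of another component), so $\overline{G}$ is connected. The paper states this observation as a cited result from~\cite{SdimGbar} without reproducing a proof, and your argument is exactly the one that would be expected there; there is nothing missing.
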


\begin{theorem}\emph{\cite{SdimGbar}}\label{global_characterization}
Let $G$ be a graph of order $n \ge 2$. Then $\Sd(G, \overline{G})=1$ if and only if $G \in \{P_2, \overline{P}_2, P_3, \overline{P}_3\}$, and $\Sd(G,\overline{G})=n-1$ if and only if $G \in \{K_n, \overline{K}_n\}$.
\end{theorem}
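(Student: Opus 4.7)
The statement comprises two equivalences, and I address them independently.

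For the characterization of $\Sd(G,\overline{G})=1$, the sufficiency is a direct calculation: for each of the four listed graphs one exhibits a single vertex whose distance vector has pairwise distinct entries in both $G$ and $\overline{G}$. For instance, an endpoint $v$ of $P_3$ has distance vector $(0,1,2)$ in $P_3$ and $(0,\infty,1)$ in $\overline{P_3}=K_2\cup K_1$.

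For necessity, the key structural observation is that if a single vertex $v$ resolves a graph $H$ on $n$ vertices (where we allow $d_H=\infty$), then the finite distances from $v$ within its component must realize the initial segment $\{0,1,\dots,\mathrm{ecc}(v)\}$ with each value attained exactly once; this forces the component of $v$ to be a path with $v$ as an endpoint. Since $\infty$ can appear at most once among the distances, at most one vertex lies outside this component. Therefore $H\cong P_n$ or $H\cong P_{n-1}\cup K_1$, with $v$ an endpoint of the path. Imposing this constraint on both $G$ and $\overline{G}$ with a common vertex $v$, and using Observation~\ref{obs_connected} to rule out both being disconnected, the enumeration reduces to: (i) $G=\overline{G}=P_n$, where $\overline{P_n}$ is a path only for $n\le 4$, and in the self-complementary case $n=4$ the two endpoint sets on the common vertex set are disjoint; (ii) $G=P_n$ and $\overline{G}=P_{n-1}\cup K_1$, where direct computation of $\overline{P_n}$ leaves only $n\in\{2,3\}$; and (iii) the symmetric case of (ii). This pins down $G\in\{P_2,\overline{P_2},P_3,\overline{P_3}\}$.

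For the characterization of $\Sd(G,\overline{G})=n-1$, I would apply Theorem~\ref{thm_bounds}(b), which rewrites the condition as: every pair of distinct $x,y\in V$ is a twin pair in $G$ or in $\overline{G}$. The crucial elementary lemma is that twins in $G$ coincide with twins in $\overline{G}$, which follows from the identity $N_{\overline{G}}(u)\setminus\{v\}=V\setminus(\{u,v\}\cup N_G(u))$ together with its symmetric counterpart for $v$. Hence every pair is a twin pair in $G$. If $E(G)$ contains an edge $uv$, the twin relation applied to $v$ and any $w\notin\{u,v\}$ forces $uw\in E(G)$; iterating, every vertex has degree $n-1$ and $G=K_n$. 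Otherwise $E(G)=\emptyset$ and $G=\overline{K_n}$.

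The main obstacle is the case analysis in Part~1: one must carefully verify that the self-complementary $P_4$ does not supply a common resolving vertex, and that $\overline{P_n}\cong P_{n-1}\cup K_1$ fails for all $n\ge 4$, so that the list of exceptional graphs is exactly as stated.
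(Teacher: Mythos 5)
The paper does not actually prove this statement; it is quoted from the authors' companion manuscript \cite{SdimGbar}, so there is no in-paper proof to compare against. Judged on its own, your argument is correct and essentially complete. The structural lemma in Part~1 (a vertex $v$ with pairwise distinct distances forces its component to be a path ending at $v$, and the value $\infty$ can occur at most once, so $H\cong P_n$ or $H\cong P_{n-1}\cup K_1$) is the right generalization of ``$\dim(H)=1$ iff $H$ is a path'' to graphs with possibly one extra component, and your three-way enumeration, including the check that the two copies of $P_4$ in the self-complementary case have disjoint endpoint sets, correctly isolates $\{P_2,\overline{P}_2,P_3,\overline{P}_3\}$. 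In Part~2, the observation that $u,v$ are twins in $G$ if and only if they are twins in $\overline{G}$ is exactly the right key lemma, and the propagation from one edge to $K_n$ works. The only point worth tightening: Theorem~\ref{thm_bounds}(b) is stated in the paper for families of \emph{connected} graphs, whereas $\{G,\overline{G}\}$ always contains a disconnected member when $G\in\{K_n,\overline{K}_n\}$; you should either note that the equivalence ``$V\setminus\{x,y\}$ resolves $H$ iff $x,y$ are not twins in $H$'' holds verbatim under the $d=\infty$ convention (which is a one-line check), or argue directly that any $(n-2)$-set misses a twin pair of $K_n$. With that remark added, the proof stands.
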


\begin{theorem}\label{characterization_Gbar}
Let $G$ be a graph of order $n \ge 2$. Then 
\begin{itemize}
\item[(a)] $\Sd_f(G, \overline{G})=1$ if and only if $G\in\{P_2, \overline{P}_2, P_3, \overline{P}_3\}$;
\item[(b)] $\Sd_f(G, \overline{G})=\frac{n}{2}$ if and only if each vertex in $G$ belongs to a twin equivalence class of cardinality at least two in $G$.
\end{itemize}
\end{theorem}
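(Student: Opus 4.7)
Plan for part (a). The reverse direction reduces, by the symmetry $\Sd_f(G,\overline{G})=\Sd_f(\overline{G},G)$, to checking the cases $G=P_2$ and $G=P_3$ directly. For $P_2$ the bound $\Sd_f=1$ is immediate. For $P_3$ with vertices $v_1v_2v_3$, the assignment $g(v_1)=g(v_3)=\tfrac{1}{2}$ and $g(v_2)=0$ simultaneously resolves $P_3$ and $\overline{P_3}$: the only pair $\{x,y\}$ in either graph with $|R\{x,y\}|=2$ is $\{v_1,v_3\}$ (with $R\{v_1,v_3\}=\{v_1,v_3\}$ in both graphs), and every other pair satisfies $R\{x,y\}=V$, so every constraint becomes either $g(v_1)+g(v_3)\ge 1$ or $g(V)\ge 1$. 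For the forward direction, $\Sd_f(G,\overline{G})=1$ forces $\dim_f(G)=\dim_f(\overline{G})=1$ by Corollary~\ref{bounds_Gbar}; by Observation~\ref{obs_connected}, at least one of $G,\overline{G}$ is connected, and by the symmetry above I may assume $G$ is connected, so Theorem~\ref{dim_frac_graph}(a) gives $G=P_n$.

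To pin down $n\le 3$, I proceed in two steps. First, any simultaneous resolving function $g$ with $g(V)=1$ is in particular a minimum resolving function of $P_n$, and I will show its support lies in $\{v_1,v_n\}$: for each interior index $i$ with $2\le i\le n-1$, a direct distance computation yields $R_{P_n}\{v_{i-1},v_{i+1}\}=V\setminus\{v_i\}$ (since $|i-1-j|=|i+1-j|$ forces $j=i$), so $g(V\setminus\{v_i\})\ge 1$ and hence $g(v_i)=0$. Thus $g(v_1)+g(v_n)=1$. Second, for $n\ge 4$ I pass to $\overline{P_n}$: since $|1-n|,|2-n|\ge 2$, both $v_1v_n$ and $v_2v_n$ are edges of $\overline{P_n}$, so $v_n$ is equidistant from $v_1$ and $v_2$ and thus $v_n\notin R_{\overline{P_n}}\{v_1,v_2\}$; as $v_1\in R_{\overline{P_n}}\{v_1,v_2\}$ and interior vertices carry zero weight, the constraint $g(R_{\overline{P_n}}\{v_1,v_2\})\ge 1$ collapses to $g(v_1)\ge 1$. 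The symmetric analysis of $\{v_{n-1},v_n\}$ (where $v_1$ is equidistant from the two) forces $g(v_n)\ge 1$, contradicting $g(v_1)+g(v_n)=1$.

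Plan for part (b). The key observation is that the twin equivalence classes of $G$ and $\overline{G}$ coincide, because complementation preserves the relation $N(u)\setminus\{w\}=N(w)\setminus\{u\}$ on the ground set $V\setminus\{u,w\}$. Hence Lemma~\ref{obs_twin_frac} can be applied once per twin class of $G$ to any simultaneous resolving function of $(G,\overline{G})$. For the reverse direction, if every vertex of $G$ lies in a twin class of size at least two, then the twin classes $S_1,\ldots,S_r$ partition $V$ and summing $g(S_j)\ge |S_j|/2$ gives $g(V)\ge n/2$, which combined with the upper bound of Corollary~\ref{bounds_Gbar} yields $\Sd_f(G,\overline{G})=n/2$. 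For the forward direction I argue by contrapositive: if some $u\in V$ has singleton twin class in $G$, define $h:V\to[0,1]$ by $h(u)=0$ and $h(v)=\tfrac{1}{2}$ for $v\ne u$, so $h(V)=(n-1)/2<n/2$. For any distinct $x,y\in V$ and any $H\in\{G,\overline{G}\}$, either $u\notin R_H\{x,y\}$ (so $h(R_H\{x,y\})=|R_H\{x,y\}|/2\ge 1$), or $u\in R_H\{x,y\}$ and $|R_H\{x,y\}|\ge 3$; otherwise $R_H\{x,y\}=\{x,y\}$ would make $x,y$ twins in $H$, and $u\in\{x,y\}$ would contradict triviality of $u$'s class. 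In the latter case $h(R_H\{x,y\})\ge (3-1)/2=1$, so $h$ is a simultaneous resolving function and $\Sd_f(G,\overline{G})\le h(V)<n/2$.

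The main obstacle I anticipate is the distance analysis in $\overline{P_n}$ in part (a); the choice of the pairs $\{v_1,v_2\}$ and $\{v_{n-1},v_n\}$ is critical, and verifying which of $v_1,v_n$ lies in each relevant resolving set requires careful bookkeeping in $\overline{P_n}$. The rest reduces to small-case verification or routine applications of Lemma~\ref{obs_twin_frac} once twin classes in $G$ and $\overline{G}$ are identified.
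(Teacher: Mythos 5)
Your proposal is correct in both parts. Part (b) is essentially the paper's argument: the same partition-and-sum application of Lemma~\ref{obs_twin_frac} for the reverse direction, and the same contrapositive with the function $h(u)=0$, $h(v)=\tfrac12$ elsewhere for the forward direction (the paper phrases the key step as ``there exists $z\in V-\{x,y\}$ resolving $x=w$ and $y$, else they would be twins,'' which is your $|R_H\{x,y\}|\ge 3$ observation). Part (a) takes a genuinely different route. For the reverse direction the paper simply cites $\Sd(G,\overline{G})=1$ for these four graphs (Theorem~\ref{global_characterization}) and squeezes $1\le\Sd_f\le\Sd$, whereas you verify explicit resolving functions; both work, the paper's is shorter but leans on the companion result. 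For the forward direction, after the common reduction to $G=P_n$ via $\dim_f(G)=\dim_f(\overline{G})=1$, the paper invokes its own Theorem~\ref{sdim_frac_path} (a family has $\Sd_f=1$ iff it consists of paths sharing a common end-vertex) and notes that for $n\ge 4$ the pair $\{P_n,\overline{P}_n\}$ fails that criterion. You instead run a self-contained linear-programming argument: a weight-$1$ simultaneous resolving function must be supported on $\{v_1,v_n\}$ because $R_{P_n}\{v_{i-1},v_{i+1}\}=V\setminus\{v_i\}$ kills every interior vertex, and then the pairs $\{v_1,v_2\}$ and $\{v_{n-1},v_n\}$ in $\overline{P}_n$ force $g(v_1)\ge 1$ and $g(v_n)\ge 1$, a contradiction; I checked the adjacency bookkeeping in $\overline{P}_n$ (including the $n=4$ case, where $\overline{P}_4$ is again a path) and it is correct. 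Your version is more elementary and does not depend on Theorem~\ref{sdim_frac_path}, at the cost of some distance computations that the paper's citation absorbs.
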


\begin{proof}
Let $G$ be a graph of order $n\ge 2$.

(a) ($\Leftarrow$) Let $G\in\{P_2, \overline{P}_2, P_3, \overline{P}_3\}$. Since $1\le \Sd_f(G, \overline{G})\le \Sd(G, \overline{G})\!=\!1$ by Theorem~\ref{global_characterization}, $\Sd_f(G, \overline{G})\!=\!1$.

($\Rightarrow$) Let $\Sd_f(G, \overline{G})=1$; then $\dim_f(G)=\dim_f(\overline{G})=1$ by Corollary~\ref{bounds_Gbar}. We may, by Observation~\ref{obs_connected}, assume that $G$ is connected; then $G=P_n$ by Theorem~\ref{dim_frac_graph}(a). If $n\ge4$, then $\overline{G}$ is connected but not a path that shares a common end-vertex with $G$; thus, $\Sd_f(G, \overline{G}) \neq 1$ by Theorem~\ref{sdim_frac_path}.

(b) ($\Leftarrow$) Let $Q_1, Q_2, \ldots, Q_x$ be the twin equivalence classes of $G$ such that $|Q_i| \ge 2$ and each vertex in $G$ belongs to some $Q_i$, where $i\in[x]$. Let $g:V(G)\rightarrow [0,1]$ be any resolving function of $G$. Then, for each $i\in[x]$, $g(Q_i) \ge \frac{|Q_i|}{2}$ by Lemma~\ref{obs_twin_frac}. By summing over the $x$ inequalities, we have $g(V(G))=\sum_{i=1}^{x}g(Q_i) \ge \sum_{i=1}^{x} \frac{|Q_i|}{2}=\frac{n}{2}$; thus, $\Sd_f(G,\overline{G})\ge\dim_f(G)\ge \frac{n}{2}$ by Corollary~\ref{bounds_Gbar}. Since $\Sd_f(G, \overline{G})\le \frac{n}{2}$ by Corollary~\ref{bounds_Gbar}, we have $\Sd_f(G,\overline{G})= \frac{n}{2}$.

($\Rightarrow$) Let $\Sd_f(G,\overline{G})= \frac{n}{2}$. Suppose $\{w\}$ is a twin equivalence class of cardinality one in $G$. For $V=V(G)=V(\overline{G})$, let $h:V \rightarrow [0,1]$ be a function defined by $h(w)=0$ and $h(u)=\frac{1}{2}$ for each $u\in V-\{w\}$; we show that $h$ is a resolving function for both $G$ and $\overline{G}$ with $h(V)=\frac{n-1}{2}$. Let $x$ and $y$ be distinct vertices in $V$. First, let $w\not\in \{x,y\}$. Then $R_{G}\{x,y\} \supseteq \{x,y\}$ and $R_{\overline{G}}\{x,y\} \supseteq \{x,y\}$; thus, $h(R_{G}\{x,y\}) \ge h(x)+h(y)=1$ and $h(R_{\overline{G}}\{x,y\}) \ge 1$. Second, let $w\in\{x,y\}$, say $x=w$ without loss of generality. Then there exists $z\in V-\{x,y\}$ such that $d_{G}(z,x)\neq d_{G}(z,y)$; notice that $d_{G}(z,x)= d_{G}(z,y)$, for every $z\in V-\{x,y\}$, means that $x=w$ and $y$ belong to the same twin equivalence class in $G$, which contradicts the assumption that $\{w\}$ is a twin equivalence class of cardinality one in $G$. Similarly, $d_{\overline{G}}(z',x)\neq d_{\overline{G}}(z',y)$ for some  $z'\in V-\{x,y\}$. So, $R_{G}\{x,y\} \supseteq \{x,y,z\}$ and $R_{\overline{G}}\{x,y\} \supseteq \{x,y,z'\}$; thus, $h(R_{G}\{x,y\}) \ge h(x)+h(y)+h(z)=1$ and $h(R_{\overline{G}}\{x,y\}) \ge 1$. Thus, $h$ is a resolving function for both $G$ and $\overline{G}$, and hence $\Sd_f(G, \overline{G}) \le h(V)=\frac{n-1}{2}$. Therefore, each vertex in $G$ must belong to a twin equivalence class of cardinality at least two in $G$ (and thus, in $\overline{G}$).~\hfill
\end{proof}

Next, we show that if $\{\diam(G), \diam(\overline{G})\}\neq\{3\}$ and $\diam(G) \le \diam(\overline{G})$, then $\Sd_f(G, \overline{G})=\dim_f(G)=\max\{\dim_f(G), \dim_f(\overline{G})\}$. We recall the following useful result. 

\begin{proposition}\emph{\cite{BM}}\label{diameter}
If $G$ is a graph with $\diam(G) \ge 4$, then $\diam(\overline{G}) \le 2$.
\end{proposition}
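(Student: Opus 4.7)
\medskip

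\noindent\textbf{Proof proposal for Proposition~\ref{diameter}.} My plan is to argue by contrapositive: I will assume $\diam(\overline{G})\ge 3$ (where the value $\infty$ is allowed in case $\overline{G}$ is disconnected) and deduce $\diam(G)\le 3$. To do this, I pick vertices $x,y\in V(G)$ with $d_{\overline{G}}(x,y)\ge 3$ and extract two structural consequences. First, since $x$ and $y$ are non-adjacent in $\overline{G}$, I get $xy\in E(G)$. Second, since $d_{\overline{G}}(x,y)\ge 3$, no vertex $z\in V(G)\setminus\{x,y\}$ can be adjacent to both $x$ and $y$ in $\overline{G}$; equivalently, for every such $z$ at least one of $xz, yz$ lies in $E(G)$. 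Setting $A=N_G(x)\setminus\{y\}$ and $B=N_G(y)\setminus\{x\}$, this means $A\cup B=V(G)\setminus\{x,y\}$.

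With $G$ so partitioned, I will bound $d_G(u,v)$ for an arbitrary pair $u,v\in V(G)$ by a small case analysis on which of the sets $\{x\}$, $\{y\}$, $A$, $B$ contain $u$ and $v$. The worst case is $u\in A\setminus B$ and $v\in B\setminus A$, for which the path $u$--$x$--$y$--$v$ shows $d_G(u,v)\le 3$; every other case is at most $2$ (going through the common neighbor $x$ or $y$, or using the edge $xy$). This yields $\diam(G)\le 3$, contradicting $\diam(G)\ge 4$, and completes the proof.

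The argument contains no real obstacle: the only subtlety is remembering to allow $d_{\overline{G}}(x,y)=\infty$ so that the case of disconnected $\overline{G}$ is handled uniformly, and being careful that the covering $A\cup B=V(G)\setminus\{x,y\}$ need not be disjoint (which only helps, shortening some of the subcases to distance at most $2$).
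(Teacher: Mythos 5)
Your proof is correct. Note that the paper does not supply its own argument for Proposition~\ref{diameter}; it is quoted from the reference \cite{BM}, so there is nothing to compare against line by line. Your contrapositive is set up correctly: the negation of $\diam(\overline{G})\le 2$ is $\diam(\overline{G})\ge 3$ (with $\infty$ allowed), and the negation of $\diam(G)\ge 4$ is $\diam(G)\le 3$, which is exactly what your case analysis delivers. The two structural facts you extract from $d_{\overline{G}}(x,y)\ge 3$ --- that $xy\in E(G)$ and that every other vertex lies in $N_G(x)\cup N_G(y)$ --- are precisely what is needed, and the worst case $u\in A\setminus B$, $v\in B\setminus A$ correctly caps the distance at $3$ via the path $u$--$x$--$y$--$v$. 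This is the standard textbook argument (it proves the equivalent classical statement that $\diam(G)\ge 3$ forces $\diam(\overline{G})\le 3$), and your two points of care --- allowing $d_{\overline{G}}(x,y)=\infty$ and not assuming $A$ and $B$ are disjoint --- are exactly the right ones.
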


\begin{lemma}\label{main_diam}
Let $G$ be a graph with $\diam(G)=2$. Let $V$ be the common vertex set for $G$ and $\overline{G}$, and let $g:V\rightarrow [0,1]$ be a real-valued function. If $g$ is a minimum resolving function of $G$, then $g$ is a resolving function of $\overline{G}$.
\end{lemma}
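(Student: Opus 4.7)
The plan is to prove, for any distinct $x, y \in V$, the set containment $R_G\{x,y\} \subseteq R_{\overline{G}}\{x,y\}$. Once this is in hand, non-negativity of $g$ gives $g(R_{\overline{G}}\{x,y\}) \ge g(R_G\{x,y\}) \ge 1$ since $g$ is a (minimum) resolving function of $G$, whereupon $g$ qualifies as a resolving function of $\overline{G}$. This strategy does not really use minimality of $g$; only the defining inequality for resolving functions on $G$ enters.

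To prove the containment, first dispense with $z \in \{x, y\}$: trivially $x, y \in R_G\{x,y\}$ and $x, y \in R_{\overline{G}}\{x,y\}$, because $d_H(w,w) = 0$ while $d_H(w, w') > 0$ for any connected (or even disconnected) graph $H$ on $V$ and distinct $w, w' \in V$. So fix $z \in V \setminus \{x, y\}$. The assumption $\diam(G) = 2$ forces $d_G(z, x), d_G(z, y) \in \{1, 2\}$. Consequently, $z \in R_G\{x,y\}$ if and only if exactly one of the two pairs $zx, zy$ lies in $E(G)$. This membership condition is self-dual under complementation: exactly one of $zx, zy$ lies in $E(G)$ iff exactly one of $zx, zy$ lies in $E(\overline{G})$.

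Now assume $z \in R_G\{x,y\}$. By the self-duality just noted, we may assume without loss of generality that $zx \in E(\overline{G})$ and $zy \notin E(\overline{G})$. Then $d_{\overline{G}}(z, x) = 1$, while $d_{\overline{G}}(z, y) \ne 1$ (it is either an integer at least $2$ or, if $\overline{G}$ is disconnected and $z, y$ lie in different components, $\infty$, per the convention established at the outset). In every case $d_{\overline{G}}(z, x) \ne d_{\overline{G}}(z, y)$, so $z \in R_{\overline{G}}\{x,y\}$, which completes the containment and the proof.

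The only real subtlety I anticipate is handling the possibility that $\overline{G}$ is disconnected (which genuinely can happen when $\diam(G) = 2$): one must read $d_{\overline{G}}(z, y) = \infty \ne 1$ rather than treating $d_{\overline{G}}$ as a bona fide integer-valued function. Once that convention is in force, the argument is a one-line consequence of the fact that, under $\diam(G) = 2$, the predicate ``resolves $x$ and $y$'' for a third vertex reduces to ``is adjacent to exactly one of $x, y$'', which is manifestly invariant under passage to the complement.
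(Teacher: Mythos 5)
Your proposal is correct and follows essentially the same route as the paper: both establish the containment $R_G\{x,y\}\subseteq R_{\overline{G}}\{x,y\}$ by observing that, under $\diam(G)=2$, a third vertex resolves $x,y$ exactly when it is adjacent to precisely one of them, a condition preserved under complementation, and then conclude via nonnegativity of $g$. Your added remarks --- that minimality of $g$ is not actually needed and that $d_{\overline{G}}$ may take the value $\infty$ --- are accurate refinements of the same argument.
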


\begin{proof}
Let $G$ be a graph with $\diam(G)=2$, and let $V=V(G)=V(\overline{G})$. For any distinct vertices $x,y \in V(G)$, let $z\in R_G\{x,y\}$. Then $d_G(x,z) \neq d_G(y,z)$, say $0\le d_G(x,z) <d_G(y,z) \le 2$ without loss of generality. If $d_G(x,z)=0$ and $d_G(y,z) \in \{1,2\}$, then $d_{\overline{G}}(x,z)=0$ and $d_{\overline{G}}(y,z) \ge 1$. If $d_G(x,z)=1$ and $d_G(y,z)=2$, then $d_{\overline{G}}(x,z) \ge 2$ and $d_{\overline{G}}(y,z)=1$. In each case, $z \in R_{\overline{G}}\{x,y\}$. So, $R_G\{x,y\} \subseteq R_{\overline{G}}\{x,y\}$. If $g: V \rightarrow [0,1]$ is a minimum resolving function of $G$, then $g(R_G\{x,y\}) \ge 1$; thus, $g(R_{\overline{G}}\{x,y\}) \ge 1$. So, $g$ is a resolving function for $\overline{G}$.~\hfill
\end{proof}

\begin{theorem}\label{thm_diam}
Let $G$ be a graph of order at least two. Suppose $\{\diam(G), \diam(\overline{G})\} \neq \{3\}$ and $\diam(G) \le \diam(\overline{G})$. Then $\Sd_f(G, \overline{G})=\dim_f(G)$.
\end{theorem}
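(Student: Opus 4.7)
The plan is to reduce the theorem to showing the upper bound $\Sd_f(G, \overline{G}) \le \dim_f(G)$, since the matching lower bound $\Sd_f(G, \overline{G}) \ge \max\{\dim_f(G), \dim_f(\overline{G})\} \ge \dim_f(G)$ is already packaged in Corollary~\ref{bounds_Gbar}.

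My first step would be to show that the hypotheses force $\diam(G) \in \{1,2\}$; this is the place where both assumptions (the diameter ordering and the exclusion of $\{3\}$) are used. If $\diam(G) \ge 4$, Proposition~\ref{diameter} gives $\diam(\overline{G}) \le 2 < \diam(G)$, contradicting $\diam(G) \le \diam(\overline{G})$. If $\diam(G) = 3$, then $\diam(\overline{G}) \ge 3$; if moreover $\diam(\overline{G}) \ge 4$, Proposition~\ref{diameter} applied to $\overline{G}$ yields $\diam(G) \le 2$, a contradiction. So $\diam(\overline{G}) = 3$, whence $\{\diam(G),\diam(\overline{G})\}=\{3\}$, which is excluded by hypothesis.

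My second step would be to verify, in each of the remaining subcases, that any minimum resolving function $g$ of $G$ is automatically a resolving function of $\overline{G}$, hence a simultaneous resolving function of $\{G,\overline{G}\}$ of weight $g(V) = \dim_f(G)$. When $\diam(G) = 2$, this is exactly Lemma~\ref{main_diam}. When $\diam(G) = 1$, we have $G = K_n$, and for any distinct $x,y \in V$ every $z \notin \{x,y\}$ satisfies $d_G(x,z)=d_G(y,z)=1$ and $d_{\overline{G}}(x,z)=d_{\overline{G}}(y,z)=\infty$, so $R_G\{x,y\} = \{x,y\} = R_{\overline{G}}\{x,y\}$, and any resolving function of $G$ trivially resolves $\overline{G}$. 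In either subcase, $\Sd_f(G,\overline{G}) \le g(V) = \dim_f(G)$, closing the argument.

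The step I expect to require the most care is the diameter case analysis above; everything else is either a direct citation of Lemma~\ref{main_diam} or a one-line verification using $R_G\{x,y\}=R_{\overline{G}}\{x,y\}$. No new computation beyond applying Proposition~\ref{diameter} twice (once to $G$, once to $\overline{G}$) is needed, so the proof should be quite short.
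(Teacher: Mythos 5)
Your proof is correct and follows essentially the same route as the paper: reduce to $\diam(G)\le 2$ via Proposition~\ref{diameter}, then settle $\diam(G)=2$ by Lemma~\ref{main_diam} and $\diam(G)=1$ separately. The only cosmetic difference is that for $G=K_n$ the paper cites Theorem~\ref{characterization_Gbar}(b), whereas you verify directly that $R_G\{x,y\}=R_{\overline{G}}\{x,y\}=\{x,y\}$; both are valid and your explicit diameter case analysis just fills in what the paper leaves to the reader.
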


\begin{proof}
Let $G$ be a graph of order $n \ge 2$. Then $\diam(G)\le 2$ by the hypotheses and Proposition~\ref{diameter}. If $\diam(G)=1$, then $G=K_n$ and $\Sd_f(G, \overline{G})=\frac{n}{2}=\dim_f(G)=\dim_f(\overline{G})$ by Theorem~\ref{characterization_Gbar}(b). If $\diam(G)=2$, then $\Sd_f(G, \overline{G})=\dim_f(G)=\max\{\dim_f(G), \dim_f(\overline{G})\}$ by Lemma~\ref{main_diam}.~\hfill
\end{proof}

\begin{remark}
Theorem~\ref{thm_diam} implies that $\Sd_f(G, \overline{G})=\dim_f(G)$ when $G$ is the Petersen graph, a complete multi-partite graph, a wheel graph or a fan graph since those graphs have diameter at most two. 
\end{remark}

Next, we show that $\Sd_f(T, \overline{T})=\dim_f(\overline{T})$ for any tree $T \neq P_4$. We first consider $\Sd_f(P_n,\overline{P}_n)$ for $n\ge 2$. 

\begin{proposition}\label{sim_fdim_path}
For $n\ge 2$, 
\begin{equation*}
\Sd_f(P_n, \overline{P}_n)=\left\{
\begin{array}{ll}
\frac{4}{3} & \mbox{ if } n=4,\\
\dim_f(\overline{P}_n) & \mbox{ otherwise }.
\end{array}\right.
\end{equation*}
\end{proposition}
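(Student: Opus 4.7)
My plan is to split into cases according to $n$ and reduce each case to an earlier result.

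\textbf{Case $n\ge 5$.} Here $\diam(P_n)=n-1\ge 4$, so by Proposition~\ref{diameter} we have $\diam(\overline{P_n})\le 2$. Hence $\diam(\overline{P_n})\le\diam(P_n)$ and $\{\diam(\overline{P_n}),\diam(P_n)\}\ne\{3\}$ because $n-1\ge 4$. Applying Theorem~\ref{thm_diam} with $G:=\overline{P_n}$ (so $\overline{G}=P_n$) immediately yields $\Sd_f(P_n,\overline{P_n})=\dim_f(\overline{P_n})$. Since $\overline{P_n}$ is not a path for $n\ge 5$, Theorem~\ref{dim_frac_graph}(a) gives $\dim_f(\overline{P_n})>1=\dim_f(P_n)$, which is consistent with Corollary~\ref{bounds_Gbar}.

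\textbf{Case $n=4$.} Label $P_4$ as $v_1v_2v_3v_4$. A direct check of adjacencies shows that $\overline{P_4}$ is the path $v_2v_4v_1v_3$, so $\{P_4,\overline{P_4}\}$ is a family of two paths on the common vertex set $\{v_1,v_2,v_3,v_4\}$. The end-vertices of $P_4$ are $\{v_1,v_4\}$ and the end-vertices of $\overline{P_4}$ are $\{v_2,v_3\}$; these sets are disjoint, so the two paths share no common end-vertex. Corollary~\ref{cor_path_new} then gives $\Sd_f(P_4,\overline{P_4})=\tfrac{|V|}{|V|-1}=\tfrac{4}{3}$. This is precisely the case where Theorem~\ref{thm_diam} fails, since $\{\diam(P_4),\diam(\overline{P_4})\}=\{3\}$, and it is the only case in which the ``otherwise'' formula of the proposition is not attained.

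\textbf{Cases $n=2,3$.} These are handled by direct verification. For $n=2$ and $n=3$, a straightforward computation of the sets $R_{P_n}\{x,y\}$ and $R_{\overline{P_n}}\{x,y\}$ (allowing $d=\infty$ in the disconnected complement) shows that every binding constraint for a simultaneous resolving function $g$ is of the form $g(\{x,y\})\ge 1$ for a single pair, so the minimum simultaneous resolving value equals $1$, which coincides with $\dim_f(\overline{P_n})$.

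The only non-routine step is the $n=4$ analysis, and even there the observation $\overline{P_4}\cong P_4$ with mismatched endpoints reduces the problem entirely to Corollary~\ref{cor_path_new}; I expect no genuine obstacle beyond correctly identifying the complement's path structure.
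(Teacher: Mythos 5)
Your proposal is correct and follows essentially the same route as the paper: Theorem~\ref{thm_diam} (via Proposition~\ref{diameter}) for $n\ge 5$, the observation that $\overline{P}_4$ is again a path with disjoint end-vertex set together with Corollary~\ref{cor_path_new} for $n=4$, and a direct check for $n\in\{2,3\}$ (where the paper instead invokes Theorem~\ref{characterization_Gbar}(a), an inessential difference). No gaps.
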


\begin{proof}
If $n\in\{2,3\}$, then $\Sd_f(P_n, \overline{P}_n)=1=\dim_f(P_n)=\dim_f(\overline{P}_n)$ by Corollary~\ref{bounds_Gbar} and Theorem~\ref{characterization_Gbar}(a). If $n\ge 5$, then $\diam(P_n) \ge 4$ and $\diam(\overline{P}_n)=2$; thus, $\Sd_f(P_n, \overline{P}_n)=\dim_f(\overline{P}_n)$ by Theorem~\ref{thm_diam}.

So, suppose $n=4$, and let $G=P_4$ be given by $u_1, u_2, u_3, u_4$; then $\overline{G}=P_4$ is given by $u_3, u_1, u_4, u_2$. Thus, $\Sd_f(P_4, \overline{P}_4) = \frac{4}{3}$ by Corollary~\ref{cor_path_new}.~\hfill
\end{proof}

Now, we recall the following result on trees $T$ with $\diam(T)=3=\diam(\overline{T})$.

\begin{proposition}\emph{\cite{SdimGbar}}\label{tree_diam3}
A tree $T$ of order $n$ satisfies $\diam(T)=3=\diam(\overline{T})$ if and only if $T$ satisfies one of the following three conditions: 
\begin{itemize}
\item[(a)] $T=P_4$;
\item[(b)] $ex(T)=1$ and $\sigma(T)=n-2$;
\item[(c)] $ex(T)=2$ and $\sigma(T)=n-2$.
\end{itemize}
\end{proposition}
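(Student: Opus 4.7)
The plan is to observe that trees of diameter three are exactly the \emph{double stars}: the tree $S_{a,b}$ ($a,b\ge 1$) consisting of two adjacent centers $x_1,x_2$ together with a set $L_1$ of $a$ leaves attached to $x_1$ and a set $L_2$ of $b$ leaves attached to $x_2$. To see this, I would take a diametral path $v_0 v_1 v_2 v_3$ in $T$ and argue that every other vertex must be a leaf adjacent to $v_1$ or $v_2$, since any other placement would produce a path of length at least $4$. Setting $x_i=v_i$ for $i\in\{1,2\}$, the tree then decomposes as $\{x_1,x_2\}\cup L_1\cup L_2$ with $L_i=N_T(x_i)\setminus\{x_{3-i}\}$.

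With this structural fact in hand, the forward direction becomes a case split on $(a,b)$ with $a\ge b\ge 1$: when $a=b=1$, $T=P_4$, giving (a); when $a\ge 2$ and $b=1$, $x_1$ is the unique major vertex, with terminal degree $a+1\ge 3$, so $ex(T)=1$, and since the only non-leaves are $x_1,x_2$ we have $\sigma(T)=n-2$, giving (b); when $a,b\ge 2$, both $x_1,x_2$ are major with positive terminal degrees $a$ and $b$, so $ex(T)=2$ and again $\sigma(T)=n-2$, giving (c). The hypothesis on $\diam(\overline{T})$ plays no role in this direction; it will turn out to be automatic for every double star.

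For the reverse direction, I would first argue that (b) and (c) each force the double-star structure: $\sigma(T)=n-2$ leaves exactly two non-leaves, which must be adjacent in the tree, so $T=S_{a,b}$, and the value of $ex(T)$ then pins down whether $b=1$ or $b\ge 2$. It remains to verify $\diam(\overline{T})=3$ for every double star $S_{a,b}$ with $a,b\ge 1$. In $\overline{T}$, $N_{\overline{T}}(x_1)=L_2$ and $N_{\overline{T}}(x_2)=L_1$, so $x_1,x_2$ are non-adjacent with disjoint neighborhoods in $\overline{T}$, forcing $d_{\overline{T}}(x_1,x_2)\ge 3$; on the other hand, the walk $x_1,\ell_2,\ell_1,x_2$ (for any $\ell_i\in L_i$) lies in $\overline{T}$, so $d_{\overline{T}}(x_1,x_2)=3$. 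The matching bound $\diam(\overline{T})\le 3$ follows by contrapositive from Proposition~\ref{diameter} applied to $\overline{T}$: if $\diam(\overline{T})\ge 4$ then $\diam(T)\le 2$, contradicting $\diam(T)=3$.

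I do not foresee any substantial obstacle. The only slightly delicate step is converting the numerical data ``$\sigma(T)=n-2$ together with $ex(T)\in\{1,2\}$'' back into the double-star structure in the reverse direction, which is handled by counting degrees and terminal degrees at the two non-leaves $x_1,x_2$.
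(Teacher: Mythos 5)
The paper does not contain a proof of this proposition: it is quoted verbatim from the cited reference \cite{SdimGbar}, so there is no internal argument to compare yours against. Judged on its own, your proof is correct and complete. The structural core --- that a tree of diameter $3$ is exactly a double star $S_{a,b}$ with centers $x_1,x_2$ and leaf sets $L_1,L_2$ ($a,b\ge 1$) --- is right, and your diametral-path argument for it is the standard one. The case split $a=b=1$, $a\ge 2=1+b$, $a,b\ge 2$ matches conditions (a), (b), (c) exactly (in case (b) the unique exterior major vertex has terminal degree $a+1$, as you note, since the degree-two center is not major), and the converse correctly recovers the double-star structure from $\sigma(T)=n-2$ via the observation that the two non-leaves must be adjacent. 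The verification that $\diam(\overline{S_{a,b}})=3$ is also sound: $N_{\overline{T}}(x_1)=L_2$ and $N_{\overline{T}}(x_2)=L_1$ are disjoint and nonadjacent to each other's center, giving $d_{\overline{T}}(x_1,x_2)=3$, while the upper bound follows either from Proposition~\ref{diameter} applied to $\overline{T}$ (as you do) or by checking that $L_1\cup L_2$ induces a complete graph in $\overline{T}$ dominating every vertex. Your remark that the hypothesis $\diam(\overline{T})=3$ is automatic once $\diam(T)=3$ is a correct (and mildly stronger) reformulation of the statement.
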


\begin{theorem}\label{Sd_tree}
For any non-trivial tree $T \neq P_4$, $\Sd_f(T,\overline{T})=\dim_f(\overline{T})\ge \dim_f(T)$. 
\end{theorem}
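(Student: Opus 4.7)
The plan is to split on $\diam(T)$ and on whether $T$ is a path. If $T=P_n$ with $n\ne 4$, Proposition~\ref{sim_fdim_path} already gives $\Sd_f(T,\overline{T})=\dim_f(\overline{T})$, and $\dim_f(\overline{T})\ge\dim_f(T)=1$ holds by Theorem~\ref{dim_frac_graph}(a). If $T$ is a non-path with $\diam(T)\ge 4$, then Proposition~\ref{diameter} forces $\diam(\overline{T})\le 2$, and Theorem~\ref{thm_diam} applied with $G=\overline{T}$ yields $\Sd_f(T,\overline{T})=\dim_f(\overline{T})$; the inequality $\dim_f(\overline{T})\ge\dim_f(T)$ then follows from Corollary~\ref{bounds_Gbar}. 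If $T$ is a non-path with $\diam(T)\le 2$, then $T=K_{1,n-1}$ with $n\ge 4$; by Lemma~\ref{main_diam} any minimum resolving function of $T$ also resolves $\overline{T}$, so $\Sd_f(T,\overline{T})=\dim_f(T)$, and the common $(n-1)$-element twin class of leaves gives $\dim_f(T)=\dim_f(\overline{T})=\frac{n-1}{2}$ via Lemma~\ref{obs_twin_frac}.

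The heart of the argument is the remaining case $\diam(T)=3$ with $T\ne P_4$. By Proposition~\ref{tree_diam3}, $T$ is either a broom (case (b): $ex(T)=1$, $\sigma(T)=n-2$) or a double star (case (c): $ex(T)=2$, $\sigma(T)=n-2$); in both subcases $\diam(\overline{T})=3$ as well, so Theorem~\ref{thm_diam} is not available. For each shape I compute $\dim_f(T)=\frac{n-2}{2}$ from Theorem~\ref{dim_frac_graph}(b), apply Lemma~\ref{obs_twin_frac} to the nontrivial twin classes (which coincide in $T$ and in $\overline{T}$) to lower-bound $\dim_f(\overline{T})$, and exhibit an explicit simultaneous resolving function matching the bound. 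For the broom on $V=\{u,v,\ell_1,\dots,\ell_{n-2}\}$ with $u$ the unique major vertex, the class $\{\ell_1,\dots,\ell_{n-3}\}$ gives $\sum_{i\le n-3}g(\ell_i)\ge\frac{n-3}{2}$; reading the distance table of $\overline{T}$ shows that $R_{\overline{T}}\{v,\ell_{n-2}\}=\{u,v,\ell_{n-2}\}$ (each $\ell_i$ with $i\le n-3$ lies at distance $1$ from both $v$ and $\ell_{n-2}$ in $\overline{T}$), forcing $g(u)+g(v)+g(\ell_{n-2})\ge 1$ and hence $\dim_f(\overline{T})\ge\frac{n-1}{2}$; the assignment $g(\ell_i)=\tfrac12$ for $i\le n-3$, $g(\ell_{n-2})=1$, $g(u)=g(v)=0$ attains this value and, by direct verification against the distance tables of both $T$ and $\overline{T}$, resolves every pair in each graph. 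For the double star, the two twin classes $\{\ell_1,\dots,\ell_a\}$ and $\{m_1,\dots,m_b\}$ already force $\dim_f(\overline{T})\ge\tfrac{a+b}{2}=\tfrac{n-2}{2}$, and $g=\tfrac12$ on every leaf together with $g(u)=g(v)=0$ serves as a simultaneous resolving function.

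I expect the main obstacle to be precisely this diameter-$3$ subcase: because Theorem~\ref{thm_diam} is blocked by $\diam(T)=\diam(\overline{T})=3$, one cannot reduce to the $\diam\le 2$ machinery and must compute the resolving sets $R_T\{x,y\}$ and $R_{\overline{T}}\{x,y\}$ pair-by-pair from the (small but nontrivial) distance tables. This bookkeeping is what establishes both the twin-augmented lower bound $\dim_f(\overline{T})\ge\frac{n-1}{2}$ in the broom case (where the bare twin-class bound $\frac{n-3}{2}$ is inadequate) and the simultaneous resolvability of the proposed candidate functions.
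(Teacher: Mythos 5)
Your proof is correct and follows essentially the same route as the paper's: the paper cases on $ex(T)$ rather than on $\diam(T)$, but the two decompositions coincide case-by-case, and the substantive content --- reduction via Proposition~\ref{diameter} and Theorem~\ref{thm_diam} when $\diam(T)\ge 4$, the twin-class argument for the star, and the twin-class-plus-one-extra-inequality (from $R_{\overline{T}}\{s,\ell_1\}=\{v,s,\ell_1\}$) analysis of the broom and double star when $\diam(T)=3$ --- is identical. Your extremal function in the broom case ($1$ on the far leaf, $0$ on both non-leaf vertices) differs from the paper's ($\frac{1}{4}$ on the far leaf and its support, $\frac{1}{2}$ on the center), but both attain the value $\frac{n-1}{2}$ and both resolve $T$ and $\overline{T}$.
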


\begin{proof}
Let $T$ be a tree of order $n\ge 2$ and let $T \neq P_4$. We consider four cases.

\medskip

\textbf{Case 1: $ex(T)=0$.} In this case, $T=P_n$. By Proposition~\ref{sim_fdim_path} and Corollary~\ref{bounds_Gbar}, $\Sd_f(P_n, \overline{P}_n)=\dim_f(\overline{P}_n)\ge \dim_f(P_n)$ for $n\neq 4$.

\medskip

\textbf{Case 2: $ex(T)=1$.} In this case, $\diam(T)\ge 2$. Let $v$ be the exterior major vertex of $T$. Let $g: V \rightarrow [0,1]$ be any resolving function of $T$, and let $\tilde{g}: V \rightarrow [0,1]$ be any resolving function of $\overline{T}$ on $V=V(T)=V(\overline{T})$. 

First, suppose $\diam(T)=2$. Then $T=K_{1, n-1}$, where $n\ge 4$. Since any two distinct vertices in $N_T(v)=\cup_{i=1}^{n-1}\{\ell_i\}$ are twins in $T$ and $\overline{T}$, we have $\sum_{i=1}^{n-1}g(\ell_i) \ge \frac{n-1}{2}$ and $\sum_{i=1}^{n-1}\tilde{g}(\ell_i) \ge \frac{n-1}{2}$ by Lemma~\ref{obs_twin_frac}. So, $\dim_f(T)\ge \frac{n-1}{2}$, $\dim_f(\overline{T})\ge \frac{n-1}{2}$, and $\Sd_f(T, \overline{T})\ge\frac{n-1}{2}$ by Corollary~\ref{bounds_Gbar}. If $h:V\rightarrow [0,1]$ is a function defined by $h(v)=0$ and $h(\ell_i)=\frac{1}{2}$ for each $i\in[n-1]$, then $h$ is a resolving function for both $T$ and $\overline{T}$ with $h(V)=\frac{n-1}{2}$; thus, $\max\{\dim_f(T),\dim_f(\overline{T}),\Sd_f(T, \overline{T})\}\le \frac{n-1}{2}$. So, $\Sd_f(T, \overline{T})=\frac{n-1}{2}=\dim_f(\overline{T})=\dim_f(T)$.

Second, suppose $\diam(T)=3$; then $T$ is isomorphic to the graph obtained from $K_{1, n-2}$ by subdividing exactly one edge once. By Proposition~\ref{tree_diam3}, $\diam(\overline{T})=3$. Let $\{\ell_1, \ell_2, \ldots, \ell_{n-2}\}$ be the set of terminal vertices of $v$ in $T$, and let $d_T(v, \ell_1)=2$ such that $s$ is the support vertex lying on the $v-\ell_1$ path in $T$. We show that $\Sd_f(T, \overline{T})=\dim_f(\overline{T})=\frac{n-1}{2}=\frac{1}{2}+\dim_f(T)$. Since $R_{\overline{T}}\{s, \ell_1\}=\{v, s, \ell_1\}$ and $R_{\overline{T}}\{\ell_i, \ell_j\}=\{\ell_i, \ell_j\}$ for any distinct $i, j\in\{2,3,\ldots, n-2\}$, we have $\tilde{g}(v)+\tilde{g}(s)+\tilde{g}(\ell_1) \ge 1$ and $\tilde{g}(\cup_{i=2}^{n-2}\{\ell_i\})\ge \frac{n-3}{2}$ by Lemma~\ref{obs_twin_frac}; thus, $\tilde{g}(V)=\tilde{g}(v)+\tilde{g}(s)+\sum_{i=1}^{n-2}\tilde{g}(\ell_i)\ge \frac{n-1}{2}$. So, $\dim_f(\overline{T}) \ge \frac{n-1}{2}$. Since $\dim_f(T)=\frac{n-2}{2}$ by Theorem~\ref{dim_frac_graph}(b), $\Sd_f(T, \overline{T})\ge \dim_f(\overline{T}) \ge \frac{n-1}{2}=\frac{1}{2}+\dim_f(T)$. If we let $h:V\rightarrow [0,1]$ be a function defined by $h(s)=h(\ell_1)=\frac{1}{4}$ and $h(v)=h(\ell_i)=\frac{1}{2}$ for each $i\in\{2,3,\ldots, n-2\}$, then $h$ is a resolving function for both $T$ and $\overline{T}$ with $h(V)=\frac{n-1}{2}$; thus, $\dim_f(\overline{T}) \le \frac{n-1}{2}$ and $\Sd_f(T, \overline{T}) \le \frac{n-1}{2}$. Therefore, $\Sd_f(T, \overline{T})=\dim_f(\overline{T})=\frac{n-1}{2}=\frac{1}{2}+\dim_f(T)$.

Third, suppose $\diam(T) \ge 4$. Then $\diam(\overline{T})\le 2$ by Proposition~\ref{diameter}, and $\Sd_f(T, \overline{T})=\dim_f(\overline{T})$ by Theorem~\ref{thm_diam}.

\medskip

\textbf{Case 3: $ex(T)=2$.} In this case, $\diam(T)\ge 3$. First, suppose $\diam(T)=3$; then $\sigma(T)=n-2$. By Proposition~\ref{tree_diam3}, $\diam(\overline{T})=3$. Let $v_1$ and $v_2$ be the two distinct exterior major vertices of $T$ with $ter_T(v_1)=a \ge 2$ and $ter_T(v_2)=b \ge 2$. Let $\{\ell_1, \ell_2, \ldots, \ell_a\}$ be the set of terminal vertices of $v_1$ in $T$, and let $\{\ell'_1, \ell'_2, \ldots, \ell'_b\}$ be the set of terminal vertices of $v_2$ in $T$. Let $g: V \rightarrow [0,1]$ be a simultaneous resolving function of the family $\{T, \overline{T}\}$ on the common vertex set $V$. Since any distinct vertices $\ell_i$ and $\ell_j$ are twins in $T$ and $\overline{T}$ for $i,j\in[a]$, we have $\sum_{i=1}^{a}g(\ell_i) \ge \frac{a}{2}$ by Lemma~\ref{obs_twin_frac}; similarly, noting that any distinct vertices $\ell'_i$ and $\ell'_j$ are twins in $T$ and $\overline{T}$ for $i,j\in[b]$, we have $\sum_{j=1}^{b}g(\ell'_j) \ge \frac{b}{2}$ by Lemma~\ref{obs_twin_frac}. So, $g(V) \ge (\sum_{i=1}^{a}g(\ell_i))+(\sum_{j=1}^{b}g(\ell'_j))\ge\frac{a+b}{2}=\frac{n-2}{2}$ and $\min\{\dim_f(T), \dim_f(\overline{T}),\Sd_f(T, \overline{T})\} \ge \frac{n-2}{2}$. If $h:V\rightarrow [0,1]$ is a function defined by $h(v_1)=h(v_2)=0$ and $h(\ell_i)=h(\ell'_j)=\frac{1}{2}$ for each $i\in[a]$ and each $j\in[b]$, then $h$ is a resolving function for both $T$ and $\overline{T}$ with $h(V)=\frac{n-2}{2}$. So, $\max\{\dim_f(T),\dim_f(\overline{T}),\Sd_f(T, \overline{T})\} \le \frac{n-2}{2}$. Thus, $\Sd_f(T, \overline{T})=\dim_f(\overline{T})=\dim_f(T)=\frac{n-2}{2}$. Next, suppose $\diam(T) \ge 4$. Then $\diam(\overline{T})\le 2$ by Proposition~\ref{diameter}, and $\Sd_f(T, \overline{T})=\dim_f(\overline{T})$ by Theorem~\ref{thm_diam}.

\medskip

\textbf{Case 4: $ex(T)\ge 3$.} In this case, $\diam(T)\ge 4$. Since $\diam(\overline{T})\le 2$ by Proposition~\ref{diameter}, $\Sd_f(T, \overline{T})=\dim_f(\overline{T})$ by Theorem~\ref{thm_diam}.~\hfill
\end{proof}

Next, we determine $\Sd_f(G, \overline{G})$ when $G$ is a unicyclic graph; we show that if $G$ is a unicyclic graph of order at least seven, then $\Sd_f(G, \overline{G})=\dim_f(\overline{G})$. We first consider $\Sd_f(C_n, \overline{C}_n)$ for $n\ge 3$.

\begin{proposition}\label{sdim_frac_cycle}
For $n \ge 3$, $\Sd_f(C_n, \overline{C}_n)=\dim_f(\overline{C}_n)=\left\{
\begin{array}{ll}
\frac{n}{2} & \mbox{ if }n\in\{3,4\},\\
\frac{n}{4} & \mbox{ if } n\ge 5.
\end{array}\right.
$.
\end{proposition}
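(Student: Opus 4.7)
The plan is to split into the small cases $n\in\{3,4\}$, which I will handle directly via twin-class arguments, and the generic case $n\ge 5$, which I will reduce to computing $\dim_f(\overline{C}_n)$ and then evaluate via vertex-transitivity.

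For $n=3$, note that $\overline{C}_3=\overline{K}_3$ forms a single twin equivalence class of size $3$ (all pairs are twins in both $K_3$ and $\overline{K}_3$); for $n=4$, $\overline{C}_4\cong 2K_2$ and the pairs $\{u_1,u_3\},\{u_2,u_4\}$ are twins in both $C_4$ and $\overline{C}_4$. Applying Lemma~\ref{obs_twin_frac} to these twin classes yields $\Sd_f(C_n,\overline{C}_n)\ge n/2$, which matches the upper bound of Corollary~\ref{bounds_Gbar}; the constant function $\frac{1}{2}$ shows simultaneously that $\dim_f(\overline{C}_n)=n/2$.

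For $n\ge 5$ I will first argue that $\overline{C}_n$ is connected with $\diam(\overline{C}_n)\le 2$. Indeed, if $i,j\in V(\overline{C}_n)$ are non-adjacent there (so $|i-j|\equiv \pm 1\pmod n$), then any vertex $k\notin\{i-1,i,i+1,i+2\}$ is a common neighbor in $\overline{C}_n$, and such a $k$ exists since $n-4\ge 1$. Because $\diam(C_n)=\lfloor n/2\rfloor\ge 2$ while $\diam(\overline{C}_n)\le 2$, the hypotheses of Theorem~\ref{thm_diam} are satisfied with $G=\overline{C}_n$: in every case $n\ge 5$ one has $\{\diam(\overline{C}_n),\diam(C_n)\}\neq\{3\}$ (the only potential concerns are $n=6,7$, where the set is $\{2,3\}$). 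Thus $\Sd_f(C_n,\overline{C}_n)=\dim_f(\overline{C}_n)$.

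It remains to compute $\dim_f(\overline{C}_n)$ for $n\ge 5$. Since $\overline{C}_n$ is a circulant graph (hence vertex-transitive), Theorem~\ref{Feng_vtransitive} gives $\dim_f(\overline{C}_n)=|V|/r(\overline{C}_n)$, and the plan is to show $r(\overline{C}_n)=4$. Using that distances in $\overline{C}_n$ take only the values $0,1,2$ with $d_{\overline{C}_n}(a,b)=2$ iff $|a-b|\equiv\pm 1\pmod n$, I will run a short case analysis by $|i-j|$. For $|i-j|\equiv\pm 1$, a direct check shows $R_{\overline{C}_n}\{i,j\}=\{i-1,i,j,j+1\}$, so $|R|=4$. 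For larger separations, the same four ``near'' vertices already lie in $R$, and further vertices may be added, giving $|R|\ge 4$. Thus $r(\overline{C}_n)=4$ and $\dim_f(\overline{C}_n)=n/4$, completing the proof.

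The main obstacle is the distance/case analysis needed to pin down $r(\overline{C}_n)$ and to verify the diameter bound uniformly in $n$; everything else is a direct invocation of previously established results in the paper.
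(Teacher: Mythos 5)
Your proof is correct and follows essentially the same route as the paper: twin-class lower bounds plus the trivial upper bound for $n\in\{3,4\}$, and Theorem~\ref{thm_diam} combined with Theorem~\ref{Feng_vtransitive} (vertex-transitivity of $\overline{C}_n$ with $r(\overline{C}_n)=4$) for large $n$. The only differences are cosmetic: you fold $n=5$ into the generic case by checking $\diam(\overline{C}_5)=2$ directly, where the paper instead uses $\overline{C}_5\cong C_5$ and Proposition~\ref{sdim_frac_vtransitive}, and you supply the verification of $\diam(\overline{C}_n)\le 2$ and $r(\overline{C}_n)=4$ that the paper merely asserts.
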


\begin{proof}
If $n=3$, then $V(C_3)$ is a twin equivalence  class of cardinality three in $C_3$, and $V(\overline{C}_3)$ is a twin equivalence  class of cardinality three in $\overline{C}_3$; thus, $\Sd_f(C_3, \overline{C}_3)=\frac{3}{2}=\dim_f(C_3)=\dim_f(\overline{C}_3)$ by Theorem~\ref{characterization_Gbar}(b), Lemma~\ref{obs_twin_frac} and Corollary~\ref{bounds_Gbar}. If $n=4$, then $\dim_f(C_4)=2$ by Theorem~\ref{dim_frac_graph}(c), and $\dim_f(\overline{C}_4)=\dim_f(2K_2)=2$ by Theorem~\ref{dim_frac_graph}(a); thus, $\Sd_f(C_4, \overline{C}_4)=2=\dim_f(C_4)=\dim_f(\overline{C}_4)$ by Corollary~\ref{bounds_Gbar}. If $n=5$, then $\overline{C}_5$ is isomorphic to $C_5$; thus, $\Sd_f(C_5,\overline{C}_5)=\frac{5}{4}=\dim_f(C_5)=\dim_f(\overline{C}_5)$ by Theorem~\ref{dim_frac_graph}(c) and Proposition~\ref{sdim_frac_vtransitive}. So, suppose $n \ge 6$; then $\diam(\overline{C}_n)=2$. So, $\Sd_f(C_n, \overline{C}_n)=\dim_f(\overline{C}_n)$ by Theorem~\ref{thm_diam}. Since $\overline{C}_n$ is a vertex-transitive graph and $\min\{|R_{\overline{C}_n}\{x,y\}|:x,y\in V(\overline{C}_n) \mbox{ and }x\neq y\}=4$, $\dim_f(\overline{C}_n)=\frac{n}{4}$ by Theorem~\ref{Feng_vtransitive}; thus, $\Sd_f(C_n, \overline{C}_n)=\dim_f(\overline{C}_n)=\frac{n}{4}$ for $n\ge 6$.~\hfill
\end{proof}

Now, we recall the unicyclic graphs $G$ with $\diam(G)=3=\diam(\overline{G})$.

\begin{proposition}\emph{\cite{SdimGbar}}\label{unicyclic_diam3}
Let $G$ be a unicyclic graph of order $n$ with $\diam(G)=3=\diam(\overline{G})$. Then $G$ is isomorphic to one of the graphs in Figure~\ref{fig_unicyclic33}. 
\end{proposition}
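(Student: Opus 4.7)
The plan is to combine the structural restrictions imposed by $\diam(G)=3$ with a ``dominating edge'' condition that $\diam(\overline{G})\ge 3$ forces, and then enumerate the surviving unicyclic graphs case by case.

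First, I would extract a necessary condition from $\diam(\overline{G})\ge 3$. If $d_{\overline{G}}(u,v)=3$, then in $\overline{G}$ the vertices $u$ and $v$ are non-adjacent and have no common neighbor. Translated to $G$, this says $uv\in E(G)$ and, for every $w\in V(G)\setminus\{u,v\}$, $w\in N_G(u)\cup N_G(v)$. Hence $\overline{G}$ having diameter at least $3$ forces $G$ to contain a \emph{dominating edge} $uv$, i.e.\ an edge satisfying $N_G[u]\cup N_G[v]=V(G)$.

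Next, I would use $\diam(G)=3$ together with unicyclicity. Let $C$ be the unique cycle of $G$, of length $k$. Since $C$ is an isometric subgraph, $\lfloor k/2\rfloor\le 3$, so $k\in\{3,4,5,6,7\}$. The condition $\diam(G)\le 3$ also bounds the depth of the trees attached to the cycle vertices; in particular, at most one cycle vertex can carry a pendant path of length $2$, and no attachment can be deeper than that. Combined with the dominating-edge condition, this restricts both the location of the edge $uv$ (on the cycle, or a pendant edge at a cycle vertex) and the shape of any tree hanging off a vertex not in $N_G[u]\cup N_G[v]$ (which must be empty).

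I would then proceed by a case analysis on $k$. For each value of $k$ and each admissible placement of the dominating edge, the depth bounds from the paragraph above yield a short list of candidate unicyclic graphs, each of which I would verify by direct distance computation: confirm that $\diam(G)$ is exactly $3$ (not $\le 2$) and that $\overline{G}$ is connected with $\diam(\overline{G})=3$ (not $\le 2$). The main obstacle is not any single clever step but rather the bookkeeping in the enumeration: ensuring that every attachment pattern compatible with the dominating-edge and diameter constraints appears exactly once, with no omissions and no duplicates up to isomorphism. Once the candidate list is assembled, confirming that it coincides with the family depicted in Figure~\ref{fig_unicyclic33} reduces to straightforward case checks.
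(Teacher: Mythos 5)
This proposition is imported from the companion paper~\cite{SdimGbar} and is stated here without proof, so there is no in-paper argument to compare yours against; I can only assess your sketch on its own terms, and it is sound. Your key reduction is the right one: $d_{\overline{G}}(u,v)\ge 3$ is equivalent to $uv$ being a dominating edge of $G$ (i.e., $N_G[u]\cup N_G[v]=V(G)$), and since any edge of a unicyclic graph with cycle length $k\ge 5$ leaves some cycle vertex uncovered, this immediately kills $k\in\{5,6,7\}$ and leaves only $k\in\{3,4\}$; the isometry of the cycle correctly bounds $k\le 7$ in the first place. The depth bookkeeping you describe is exactly what is needed: for attachments at distinct cycle vertices $c_i,c_j$ one needs $d(x,c_i)+d(c_i,c_j)+d(c_j,y)\le 3$, which forces all pendants of a $C_4$ to sit on one vertex or two adjacent vertices at depth one (families (c),(d)), and forces a triangle to carry either a single depth-two broom in which only one neighbor of the root has children (family (a)) or stars at two cycle vertices (family (b)). One simplification you could invoke to shorten the final verification: it is standard that $\diam(G)\ge 3$ implies $\diam(\overline{G})\le 3$, so once a dominating edge exists you automatically get $\diam(\overline{G})=3$ exactly, and no separate upper-bound check on $\overline{G}$ is needed. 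With that, your enumeration produces precisely the four families of Figure~\ref{fig_unicyclic33}.
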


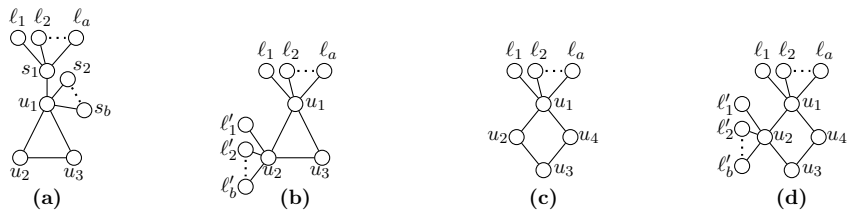
\begin{figure}[ht]
\centering
\begin{tikzpicture}[scale=.55, transform shape]

\node [draw, shape=circle, scale=1] (a0) at  (0,1.3) {};
\node [draw, shape=circle, scale=1] (a1) at  (-0.65,0) {};
\node [draw, shape=circle, scale=1] (a2) at  (0.65,0) {};
\node [draw, shape=circle, scale=1] (a3) at  (0,2.1) {};
\node [draw, shape=circle, scale=1] (a4) at  (-0.2,2.9) {};
\node [draw, shape=circle, scale=1] (a31) at  (0.5,1.9) {};
\node [draw, shape=circle, scale=1] (a32) at  (0.9,1.15) {};
\node [draw, shape=circle, scale=1] (a41) at  (-0.7,2.9) {};
\node [draw, shape=circle, scale=1] (a42) at  (0.7,2.9) {};

\draw(a0)--(a1)--(a2)--(a0);\draw(a0)--(a3)--(a4);\draw(a31)--(a0)--(a32);
\draw(a41)--(a3)--(a42);\draw[thick, dotted](0.05,2.9)--(0.45,2.9);\draw[thick, dotted](0.6,1.7)--(0.8,1.35);

\node [scale=1.3] at (-0.7,3.4) {$\ell_1$};
\node [scale=1.3] at (-0.1,3.4) {$\ell_2$};
\node [scale=1.3] at (0.85,3.4) {$\ell_a$};
\node [scale=1.3] at (-0.4,2.1) {$s_1$};
\node [scale=1.3] at (0.85,2.2) {$s_2$};
\node [scale=1.3] at (1.35,1.15) {$s_b$};
\node [scale=1.3] at (-0.45,1.29) {$u_1$};
\node [scale=1.3] at (0.65,-0.4) {$u_3$};
\node [scale=1.3] at (-0.65,-0.4) {$u_2$};

\node [draw, shape=circle, scale=1] (b0) at  (6,1.3) {};
\node [draw, shape=circle, scale=1] (b1) at  (5.35,0) {};
\node [draw, shape=circle, scale=1] (b2) at  (6.65,0) {};
\node [draw, shape=circle, scale=1] (b11) at  (5.3,2.1) {};
\node [draw, shape=circle, scale=1] (b12) at  (5.8,2.1) {};
\node [draw, shape=circle, scale=1] (b13) at  (6.7,2.1) {};
\node [draw, shape=circle, scale=1] (b21) at  (4.8,0.8) {};
\node [draw, shape=circle, scale=1] (b22) at  (4.8,0.2) {};
\node [draw, shape=circle, scale=1] (b23) at  (4.8,-0.7) {};

\draw(b0)--(b1)--(b2)--(b0);\draw(b11)--(b0)--(b12);\draw(b0)--(b13);
\draw(b21)--(b1)--(b22);\draw(b1)--(b23);
\draw[thick, dotted](6.05,2.1)--(6.45,2.1);\draw[thick, dotted](4.8,-0.05)--(4.8,-0.45);

\node [scale=1.3] at (5.3,2.6) {$\ell_1$};
\node [scale=1.3] at (5.9,2.6) {$\ell_2$};
\node [scale=1.3] at (6.8,2.6) {$\ell_a$};
\node [scale=1.3] at (4.4,0.8) {$\ell'_1$};
\node [scale=1.3] at (4.4,0.2) {$\ell'_2$};
\node [scale=1.3] at (4.4,-0.7) {$\ell'_b$};
\node [scale=1.3] at (6.5,1.3) {$u_1$};
\node [scale=1.3] at (6.6,-0.35) {$u_3$};
\node [scale=1.3] at (5.45,-0.35) {$u_2$};

\node [draw, shape=circle, scale=1] (d0) at  (12,1.3) {};
\node [draw, shape=circle, scale=1] (d1) at  (11.35,0.5) {};
\node [draw, shape=circle, scale=1] (d2) at  (12.65,0.5) {};
\node [draw, shape=circle, scale=1] (d3) at  (12,-0.3) {};
\node [draw, shape=circle, scale=1] (d11) at  (11.3,2.1) {};
\node [draw, shape=circle, scale=1] (d12) at  (11.8,2.1) {};
\node [draw, shape=circle, scale=1] (d13) at  (12.7,2.1) {};

\draw(d0)--(d1)--(d3)--(d2)--(d0);\draw(d11)--(d0)--(d12);\draw(d0)--(d13);\draw[thick, dotted](12.05,2.1)--(12.45,2.1);

\node [scale=1.3] at (11.3,2.6) {$\ell_1$};
\node [scale=1.3] at (11.9,2.6) {$\ell_2$};
\node [scale=1.3] at (12.8,2.6) {$\ell_a$};
\node [scale=1.3] at (12.5,1.3) {$u_1$};
\node [scale=1.3] at (10.9,0.5) {$u_2$};\node [scale=1.3] at (13.1,0.5) {$u_4$};\node [scale=1.3] at (12.5,-0.3) {$u_3$};

\node [draw, shape=circle, scale=1] (e0) at  (18,1.3) {};
\node [draw, shape=circle, scale=1] (e1) at  (17.35,0.5) {};
\node [draw, shape=circle, scale=1] (e2) at  (18.65,0.5) {};
\node [draw, shape=circle, scale=1] (e3) at  (18,-0.3) {};
\node [draw, shape=circle, scale=1] (e11) at  (17.3,2.1) {};
\node [draw, shape=circle, scale=1] (e12) at  (17.8,2.1) {};
\node [draw, shape=circle, scale=1] (e13) at  (18.7,2.1) {};
\node [draw, shape=circle, scale=1] (e21) at  (16.8,1.3) {};
\node [draw, shape=circle, scale=1] (e22) at  (16.8,0.7) {};
\node [draw, shape=circle, scale=1] (e23) at  (16.8,-0.2) {};

\draw(e0)--(e1)--(e3)--(e2)--(e0);\draw(e11)--(e0)--(e12);\draw(e0)--(e13);\draw[thick, dotted](18.05,2.1)--(18.45,2.1);
\draw(e21)--(e1)--(e22);\draw(e1)--(e23);\draw[thick, dotted](16.8,0.45)--(16.8,0.05);

\node [scale=1.3] at (17.3,2.6) {$\ell_1$};
\node [scale=1.3] at (17.9,2.6) {$\ell_2$};
\node [scale=1.3] at (18.8,2.6) {$\ell_a$};
\node [scale=1.3] at (16.4,1.3) {$\ell'_1$};
\node [scale=1.3] at (16.4,0.7) {$\ell'_2$};
\node [scale=1.3] at (16.4,-0.2) {$\ell'_b$};
\node [scale=1.3] at (18.5,1.3) {$u_1$};
\node [scale=1.3] at (17.8,0.5) {$u_2$};\node [scale=1.3] at (19.1,0.5) {$u_4$};\node [scale=1.3] at (18.5,-0.3) {$u_3$};

\node [scale=1.3] at (0,-1) {\textbf{(a)}};
\node [scale=1.3] at (6,-1) {\textbf{(b)}};
\node [scale=1.3] at (12,-1) {\textbf{(c)}};
\node [scale=1.3] at (18,-1) {\textbf{(d)}};

\end{tikzpicture}
\caption{Unicyclic graphs $G$ with $\diam(G)=3=\diam(\overline{G})$, where $a,b \ge 1$.}\label{fig_unicyclic33}
\end{figure}

\begin{proposition}\label{sdim_frac_unicyclic_main}
Let $G$ be a unicyclic graph with $\diam(G)=3=\diam(\overline{G})$. Then 
\begin{equation*}
\Sd_f(G, \overline{G})=\left\{
\begin{array}{ll}
\dim_f(G)=2=\frac{1}{2}+\dim_f(\overline{G}) & \mbox{if $G$ is isomorphic to Figure~\ref{fig_unicyclic33}(c) with $a=1$},\\
\dim_f(G)=2=\frac{1}{3}+\dim_f(\overline{G}) & \mbox{if $G$ is isomorphic to Figure~\ref{fig_unicyclic33}(d) with $a=b=1$},\\
\dim_f(\overline{G}) & \mbox{otherwise}.
\end{array}\right.
\end{equation*}
\end{proposition}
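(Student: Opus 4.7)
The plan is to proceed by case analysis using Proposition~\ref{unicyclic_diam3}, which restricts $G$ to one of the four graph families (a)--(d) in Figure~\ref{fig_unicyclic33}. For each case I will compute $\dim_f(G)$, $\dim_f(\overline{G})$, and $\Sd_f(G,\overline{G})$ separately, using Lemma~\ref{obs_twin_frac} for twin-based lower bounds (recalling that twin equivalence classes of $G$ and $\overline{G}$ coincide), supplemented by direct LP analysis on the resolvability sets $R_G\{x,y\}$ and $R_{\overline{G}}\{x,y\}$ when twin classes alone are insufficient, together with explicit constructions for upper bounds.

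For the non-exceptional subcases---all of (a) and (b), together with (c) when $a\ge 2$, and (d) when $\max\{a,b\}\ge 2$---the strategy is to construct a single function $g:V\to[0,1]$ whose weight matches $\dim_f(\overline{G})$ and which simultaneously resolves both $G$ and $\overline{G}$, typically by assigning $\frac{1}{2}$ to one representative from each pendant twin class and to one member of each cycle twin pair, and $0$ elsewhere. By Corollary~\ref{bounds_Gbar}, this yields $\Sd_f(G,\overline{G})=\dim_f(\overline{G})$. The subcase (b) with $a=b=1$ has no nontrivial twin class and needs an extra argument: summing the three-element sets $R_G\{u_1,u_3\}$, $R_G\{u_2,u_3\}$, and $R_G\{\ell_1,\ell'_1\}$ (which cover $V$ exactly twice) forces $\dim_f(G)\ge \frac{3}{2}$, and $g(\ell_1)=g(\ell'_1)=g(u_3)=\frac{1}{2}$ resolves both $G$ and $\overline{G}$ at weight $\frac{3}{2}$.

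For (c) with $a=1$, the graph $G$ is $C_4$ with a single pendant at $u_1$; the twin inequality $g(u_2)+g(u_4)\ge 1$ combined with $g(R_G\{u_1,u_3\})=g(\{\ell_1,u_1,u_3\})\ge 1$ forces $\dim_f(G)\ge 2$, and an independent LP analysis of the five-vertex graph $\overline{G}$ yields $\dim_f(\overline{G})=\frac{3}{2}$. The assignment $g(\ell_1)=g(u_1)=g(u_2)=g(u_4)=\frac{1}{2}$, $g(u_3)=0$ has weight $2$ and, by direct verification against the resolvability sets of both $G$ and $\overline{G}$, is a simultaneous resolving function; hence $\Sd_f(G,\overline{G})=2$. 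For (d) with $a=b=1$, the graph $G$ is $C_4$ with pendants at two adjacent cycle vertices and has no nontrivial twin class; summing the three-element sets $R_G\{u_1,u_3\}$ and $R_G\{u_2,u_4\}$, which partition $V$, gives $\dim_f(G)\ge 2$. To establish $\dim_f(\overline{G})=\frac{5}{3}$, I will use the LP-dual combination weighting $R_{\overline{G}}\{\ell_1,u_4\}$, $R_{\overline{G}}\{\ell'_1,u_3\}$, and $R_{\overline{G}}\{u_3,u_4\}$ by $1$ each and $R_{\overline{G}}\{\ell_1,\ell'_1\}$ by $2$, which sums to $3\,g(V)\ge 5$. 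The function $g(\ell_1)=g(\ell'_1)=g(u_3)=g(u_4)=\frac{1}{2}$ resolves both graphs at weight $2$, so $\Sd_f(G,\overline{G})=2$.

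The main obstacle will be the LP analysis of the two exceptional subcases: identifying the correct combinations of resolvability constraints to pin down $\dim_f(\overline{G})$ exactly, and carrying out the finite but tedious case-check to verify that the proposed weight-$2$ function resolves every vertex pair in both $G$ and $\overline{G}$ simultaneously.
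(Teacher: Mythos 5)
Your proposal is correct and follows essentially the same route as the paper's proof: a case analysis over the four families of Proposition~\ref{unicyclic_diam3}, with lower bounds obtained by summing twin inequalities from Lemma~\ref{obs_twin_frac} together with hand-picked $R$-set inequalities, and upper bounds from explicit half- or third-integral weight functions; your particular dual combinations and witness functions differ from the paper's but check out (e.g., your weight-$2$ function for Figure~\ref{fig_unicyclic33}(c) with $a=1$, and your combination giving $3\tilde{g}(V)\ge 5$ for (d) with $a=b=1$, are both valid). One small point: in subcases where your lower bound on $\Sd_f(G,\overline{G})$ comes from $\dim_f(G)$ (e.g., Figure~\ref{fig_unicyclic33}(b) with $a=b=1$), you must still separately show $\dim_f(\overline{G})\ge\frac{3}{2}$ to justify writing the answer as $\dim_f(\overline{G})$ --- here $R_{\overline{G}}\{u_1,u_2\}$, $R_{\overline{G}}\{\ell_1,u_3\}$ and $R_{\overline{G}}\{\ell'_1,u_3\}$ cover $V$ exactly twice, as the paper notes --- and, cosmetically, $R_G\{\ell_1,\ell'_1\}=\{\ell_1,\ell'_1,u_1,u_2\}$ has four elements rather than three, though your double-cover computation still yields $2g(V)\ge 3$.
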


\begin{proof}
Let $G$ be a unicyclic graph with $\diam(G)=3=\diam(\overline{G})$. By Proposition~\ref{unicyclic_diam3}, $G$ is isomorphic to one of the graphs in Figure~\ref{fig_unicyclic33}. Let $g: V \rightarrow [0,1]$ be any resolving function of $G$, and let $\tilde{g}: V \rightarrow [0,1]$ be any resolving function of $\overline{G}$ on the common vertex set $V=V(G)=V(\overline{G})$. 

\medskip

\textbf{Case 1: $G$ is isomorphic to Figure~\ref{fig_unicyclic33}(a).} In this case, we show that $\Sd_f(G, \overline{G})=\dim_f(\overline{G})$.

First, suppose $a=1$ and $b\in\{1,2\}$. We show that $\dim_f(\overline{G})=2$. Since $R_{\overline{G}}\{u_2, u_3\}=\{u_2, u_3\}$ and $R_{\overline{G}}\{\ell_1, s_1\}=\{\ell_1, s_1, u_1\}$, we have $\tilde{g}(u_2)+\tilde{g}(u_3) \ge1$ and $\tilde{g}(\ell_1)+\tilde{g}(s_1)+\tilde{g}(u_1)\ge 1$; thus, $\tilde{g}(V)\ge 2$, which implies $\dim_f(\overline{G})\ge 2$. If we let $h: V \rightarrow [0,1]$ be a function defined by $h(\ell_1)=h(s_1)=h(u_2)=h(u_3)=\frac{1}{2}$ and $h(v)=0$ for each $v\in V-\{\ell_1, s_1,u_2,u_3\}$, then $h$ is a resolving function for both $G$ and $\overline{G}$ with $h(V)=2$; thus, $\max\{\dim_f(G),\dim_f(\overline{G}),\Sd_f(G, \overline{G})\}\le 2$. By Corollary~\ref{bounds_Gbar}, $\Sd_f(G, \overline{G})=\dim_f(\overline{G})=2$.

Second, suppose $a=1$ and $b\ge3$. We note the following: (i) $\tilde{g}(\ell_1)+\tilde{g}(s_1)+\tilde{g}(u_1)\ge 1$ since $R_{\overline{G}}\{\ell_1, s_1\}=\{\ell_1, s_1, u_1\}$; (ii) $\tilde{g}(u_2)+\tilde{g}(u_3)\ge 1$ since $u_2$ and $u_3$ are twins in $\overline{G}$; (iii) $\sum_{i=2}^{b}\tilde{g}(s_i)\ge\frac{b-1}{2}$ by Lemma~\ref{obs_twin_frac}. So, $\tilde{g}(V) \ge \frac{b+3}{2}$, and hence $\dim_f(\overline{G})\ge \frac{b+3}{2}$. If we let $h:V \rightarrow [0,1]$ be a function defined by $h(u_1)=0$ and $h(v)=\frac{1}{2}$ for each $v\in V-\{u_1\}$, then $h$ is a resolving function for both $G$ and $\overline{G}$ with $h(V)=\frac{b+3}{2}$; thus, $\max\{\dim_f(G),\dim_f(\overline{G}),\Sd_f(G, \overline{G})\} \le \frac{b+3}{2}$. By Corollary~\ref{bounds_Gbar}, $\Sd_f(G, \overline{G})=\dim_f(\overline{G})=\frac{b+3}{2}$.

Third, suppose $a\ge 2$ and $b\in\{1,2\}$. We show that $\dim_f(\overline{G})=\frac{a+3}{2}$. If $b=1$, then $R_{\overline{G}}\{u_1, u_2\}=\{s_1, u_1, u_2\}$, $R_{\overline{G}}\{u_1, u_3\}=\{s_1, u_1, u_3\}$ and $R_{\overline{G}}\{u_2, u_3\}=\{u_2, u_3\}$; thus, $\tilde{g}(s_1)+\tilde{g}(u_1)+\tilde{g}(u_2)\ge 1$, $\tilde{g}(s_1)+\tilde{g}(u_1)+\tilde{g}(u_3)\ge 1$ and $\tilde{g}(u_2)+\tilde{g}(u_3)\ge 1$. If $b=2$, then $R_{\overline{G}}\{u_1, u_2\}=\{s_1,s_2, u_1, u_2\}$, $R_{\overline{G}}\{u_1, u_3\}=\{s_1, s_2,u_1, u_3\}$ and $R_{\overline{G}}\{u_2, u_3\}=\{u_2, u_3\}$; thus, $\tilde{g}(s_1)+\tilde{g}(s_2)+\tilde{g}(u_1)+\tilde{g}(u_2)\ge 1$, $\tilde{g}(s_1)+\tilde{g}(s_2)+\tilde{g}(u_1)+\tilde{g}(u_3)\ge 1$ and $\tilde{g}(u_2)+\tilde{g}(u_3)\ge 1$. In each case, by summing over the three inequalities, we have $\tilde{g}(V-(\cup_{i=1}^{a}\{\ell_i\})) \ge \frac{3}{2}$. Since $\tilde{g}(\cup_{i=1}^{a}\{\ell_i\})\ge \frac{a}{2}$ by Lemma~\ref{obs_twin_frac}, we have $\tilde{g}(V) \ge \frac{a+3}{2}$, and thus $\dim_f(\overline{G})\ge \frac{a+3}{2}$. If we let $h: V \rightarrow [0,1]$ be a function defined by $h(\ell_i)=h(s_1)=h(u_2)=h(u_3)=\frac{1}{2}$ for each $i\in[a]$ and $h(v)=0$ for each $v\in V-((\cup_{i=1}^{a}\{\ell_i\}) \cup \{s_1, u_2, u_3\})$, then $h$ is a resolving function for both $G$ and $\overline{G}$ with $h(V)=\frac{a+3}{2}$; thus, $\max\{\dim_f(G),\dim_f(\overline{G}),\Sd_f(G, \overline{G})\}\le \frac{a+3}{2}$. By Corollary~\ref{bounds_Gbar}, $\Sd_f(G, \overline{G})=\dim_f(\overline{G})=\frac{a+3}{2}$.

Fourth, suppose $a\ge 2$ and $b\ge3$. By Lemma~\ref{obs_twin_frac}, $\sum_{i=1}^{a}\tilde{g}(\ell_i)\ge \frac{a}{2}$, $\sum_{i=2}^{b}\tilde{g}(s_i)\ge \frac{b-1}{2}$, and $\tilde{g}(u_2)+\tilde{g}(u_3)\ge 1$. By summing over the three inequalities, we have $\tilde{g}(V) \ge \frac{a+b+1}{2}$, and thus $\dim_f(\overline{G})\ge \frac{a+b+1}{2}$. If we let $h:V\rightarrow [0,1]$ be a function defined by $h(s_1)=h(u_1)=0$ and $h(v)=\frac{1}{2}$ for each $v\in V-\{s_1, u_1\}$, then $h$ is a resolving function for both $G$ and $\overline{G}$ with $h(V)=\frac{a+b+1}{2}$; thus, $\max\{\dim_f(G),\dim_f(\overline{G}),\Sd_f(G, \overline{G})\} \le \frac{a+b+1}{2}$. By Corollary~\ref{bounds_Gbar}, $\Sd_f(G, \overline{G})=\dim_f(\overline{G})=\frac{a+b+1}{2}$.

\medskip

\textbf{Case 2: $G$ is isomorphic to Figure~\ref{fig_unicyclic33}(b).} In this case, we show that $\Sd_f(G, \overline{G})=\dim_f(\overline{G})$.

First, suppose $a=b=1$. We show that $\dim_f(\overline{G})=\frac{3}{2}$. Since $R_{\overline{G}}\{u_1, u_2\}=V-\{u_3\}$, $R_{\overline{G}}\{\ell_1, u_3\}=V-\{\ell'_1,u_1\}$, and $R_{\overline{G}}\{\ell'_1, u_3\}=V-\{\ell_1, u_2\}$, we have $\tilde{g}(V)-\tilde{g}(u_3)\ge1$, $\tilde{g}(V)-(\tilde{g}(\ell'_1)+\tilde{g}(u_1))\ge 1$, and $\tilde{g}(V)-(\tilde{g}(\ell_1)+\tilde{g}(u_2)) \ge 1$. By summing over the three inequalities, we have $2\tilde{g}(V)\ge 3$, i.e., $\tilde{g}(V) \ge \frac{3}{2}$; thus, $\dim_f(\overline{G})\ge \frac{3}{2}$. If we let $h: V \rightarrow [0,1]$ be a function defined by $h(\ell_1)=h(\ell'_1)=0$ and $h(u_i)=\frac{1}{2}$ for each $i \in[3]$, then $h$ forms a resolving function for both $G$ and $\overline{G}$ with $h(V)=\frac{3}{2}$; thus, $\max\{\dim_f(G),\dim_f(\overline{G}),\Sd_f(G, \overline{G})\}\le \frac{3}{2}$. By Corollary~\ref{bounds_Gbar}, $\Sd_f(G, \overline{G})=\dim_f(\overline{G})=\frac{3}{2}$.

Second, suppose $a\ge 2=1+b$ or $b\ge 2=1+a$, say the former. We note the following: (i) $\tilde{g}(\ell'_1)+\tilde{g}(u_1)+\tilde{g}(u_3) \ge 1$ since $R_{\overline{G}}\{\ell'_1, u_3\}=\{\ell'_1, u_1, u_3\}$; (ii) $\sum_{i=1}^{a}\tilde{g}(\ell_i) \ge \frac{a}{2}$ by  Lemma~\ref{obs_twin_frac}. So, $\tilde{g}(V) \ge \frac{a+2}{2}$, and hence $\dim_f(\overline{G})\ge \frac{a+2}{2}$. If we let $h:V\rightarrow [0,1]$ be a function defined by $h(u_1)=h(u_2)=0$ and $h(v)=\frac{1}{2}$ for each $v\in V-\{u_1, u_2\}$, then $h$ forms a resolving function for both $G$ and $\overline{G}$ with $h(V)=\frac{a+2}{2}$; thus, $\max\{\dim_f(G),\dim_f(\overline{G}),\Sd_f(G, \overline{G})\}  \le \frac{a+2}{2}$. By Corollary~\ref{bounds_Gbar}, $\Sd_f(G, \overline{G})=\dim_f(\overline{G})=\frac{a+2}{2}$.  

Third, suppose $a\ge 2$ and $b \ge 2$. We show that $\dim_f(\overline{G})=\frac{a+b+1}{2}$. We note the following: (i) $\tilde{g}(\cup_{i=1}^{a}\{\ell_i\}) \ge\frac{a}{2}$ by Lemma~\ref{obs_twin_frac}; (ii) for each $i\in[a]$, $\tilde{g}(\ell_i)+\tilde{g}(u_2)+\tilde{g}(u_3)\ge 1$ since $R_{\overline{G}}\{\ell_i, u_3\}=\{\ell_i, u_2, u_3\}$; (iii) $\tilde{g}(\cup_{i=1}^{b}\{\ell'_i\}) \ge\frac{b}{2}$ by Lemma~\ref{obs_twin_frac}; (iv) for each $j\in[b]$, $\tilde{g}(\ell'_j)+\tilde{g}(u_1)+\tilde{g}(u_3)\ge 1$ since $R_{\overline{G}}\{\ell'_j, u_3\}=\{\ell'_j, u_1, u_3\}$; (v) for each $i\in[a]$ and for each $j\in[b]$, $\tilde{g}(\ell_i)+\tilde{g}(\ell'_j)+\tilde{g}(u_1)+\tilde{g}(u_2)\ge 1$ since $R_{\overline{G}}\{\ell_i, \ell'_j\}=\{\ell_i, \ell'_j, u_1, u_2\}$. If $a=b=2$, then we have the following:  $2(\tilde{g}(\ell_1)+\tilde{g}(\ell_2))\ge 2$ from (i) ; $\tilde{g}(\ell_1)+\tilde{g}(u_2)+\tilde{g}(u_3)\ge 1$ and $\tilde{g}(\ell_2)+\tilde{g}(u_2)+\tilde{g}(u_3)\ge 1$ from (ii);  $2(\tilde{g}(\ell'_1)+\tilde{g}(\ell'_2))\ge 2$ from (iii); $\tilde{g}(\ell'_1)+\tilde{g}(u_1)+\tilde{g}(u_3)\ge 1$ and $\tilde{g}(\ell'_2)+\tilde{g}(u_1)+\tilde{g}(u_3)\ge 1$ from (iv); $\tilde{g}(\ell_1)+\tilde{g}(\ell'_1)+\tilde{g}(u_1)+\tilde{g}(u_2)\ge 1$ and $\tilde{g}(\ell_2)+\tilde{g}(\ell'_2)+\tilde{g}(u_1)+\tilde{g}(u_2)\ge 1$ from (v). By summing over the above inequalities, we have $4\tilde{g}(V)\ge10$, i.e., $\tilde{g}(V)\ge \frac{5}{2}$; thus, $\dim_f(\overline{G}) \ge \frac{5}{2}=\frac{a+b+1}{2}$. If $a\ge 3=1+b$ or $b\ge 3=1+a$, say the former, then we have the following: $4(\sum_{i=2}^{a}\tilde{g}(\ell_i))\ge 2(a-1)$ and $2(\tilde{g}(\ell'_1)+\tilde{g}(\ell'_2))\ge 2$ by Lemma~\ref{obs_twin_frac}; $2(\tilde{g}(\ell_1)+\tilde{g}(u_2)+\tilde{g}(u_3))\ge 2$ from (ii); $\tilde{g}(\ell'_1)+\tilde{g}(u_1)+\tilde{g}(u_3)\ge 1$ and  $\tilde{g}(\ell'_2)+\tilde{g}(u_1)+\tilde{g}(u_3)\ge 1$  from (iv); $\tilde{g}(\ell_1)+\tilde{g}(\ell'_1)+\tilde{g}(u_1)+\tilde{g}(u_2)\ge 1$ and $\tilde{g}(\ell_1)+\tilde{g}(\ell'_2)+\tilde{g}(u_1)+\tilde{g}(u_2)\ge 1$ from (v). By summing over the above inequalities, we have $4\tilde{g}(V)\ge 2a+6$, i.e., $\tilde{g}(V)\ge \frac{a+3}{2}$; thus, $\dim_f(\overline{G}) \ge \frac{a+3}{2}=\frac{a+b+1}{2}$. If $a\ge 3$ and $b\ge 3$, then we have the following: $2(\sum_{i=2}^{a}\tilde{g}(\ell_i))\ge a-1$ and $2(\sum_{i=2}^{b}\tilde{g}(\ell'_i))\ge b-1$ by Lemma~\ref{obs_twin_frac}; $\tilde{g}(\ell_1)+\tilde{g}(u_2)+\tilde{g}(u_3)\ge 1$ from (ii); $\tilde{g}(\ell'_1)+\tilde{g}(u_1)+\tilde{g}(u_3)\ge 1$ from (iv); $\tilde{g}(\ell_1)+\tilde{g}(\ell'_1)+\tilde{g}(u_1)+\tilde{g}(u_2)\ge 1$ from (v). By summing over the five inequalities above, we have $2\tilde{g}(V) \ge a+b+1$, and thus $\dim_f(\overline{G})\ge \frac{a+b+1}{2}$. Now, if we let $h: V\rightarrow [0,1]$ be a function defined by $h(u_1)=h(u_2)=0$ and $h(v)=\frac{1}{2}$ for each $v\in V-\{u_1, u_2\}$, then $h$ is a resolving function for both $G$ and $\overline{G}$ with $h(V)=\frac{a+b+1}{2}$; thus, $\max\{\dim_f(G),\dim_f(\overline{G}),\Sd_f(G, \overline{G})\}\le \frac{a+b+1}{2}$. By Corollary~\ref{bounds_Gbar}, $\Sd_f(G, \overline{G})=\dim_f(\overline{G})=\frac{a+b+1}{2}$.  

\medskip

\textbf{Case 3: $G$ is isomorphic to Figure~\ref{fig_unicyclic33}(c).} In this case, we show that $\Sd_f(G, \overline{G})=\dim_f(G)=2=\frac{1}{2}+\dim_f(\overline{G})$ for $a=1$, and $\Sd_f(G, \overline{G})=\dim_f(G)=\frac{a+2}{2}=\dim_f(\overline{G})$ for $a\ge2$.

First, suppose $a=1$.  We show that $\dim_f(G)=2=\frac{1}{2}+\dim_f(\overline{G})$. Since $R_{\overline{G}}\{\ell_1, u_2\}=V-\{u_4\}$, $R_{\overline{G}}\{\ell_1, u_4\}=V-\{u_2\}$ and $R_{\overline{G}}\{u_2, u_4\}=\{u_2,u_4\}$, we have $\tilde{g}(V)-\tilde{g}(u_4)\ge 1$, $\tilde{g}(V)-\tilde{g}(u_2)\ge 1$ and $\tilde{g}(u_2)+\tilde{g}(u_4)\ge 1$. By summing over the three inequalities, we have $2\tilde{g}(V)\ge 3$, and thus $\dim_f(\overline{G})\ge \frac{3}{2}$. If $h': V \rightarrow [0,1]$ is a function defined by $h'(\ell_1)=h'(u_3)=0$ and $h'(v)=\frac{1}{2}$ for each $V-\{\ell_1, u_3\}$, then $h'$ is a resolving function for $\overline{G}$ with $h'(V)=\frac{3}{2}$; thus, $\dim_f(\overline{G})\le \frac{3}{2}$. Now, noting that $R_G\{u_2,u_4\}=\{u_2, u_4\}$ and $R_G\{u_1, u_3\}=\{\ell_1,u_1, u_3\}$, we have $g(u_2)+g(u_4) \ge 1$ and $g(\ell_1)+g(u_1) +g(u_3)\ge 1$; thus, $g(V) \ge 2$ and hence $\dim_f(G) \ge 2$. If $h: V \rightarrow [0,1]$ is a function defined by $h(\ell_1)=0$ and $h(v)=\frac{1}{2}$ for each $v\in V-\{\ell_1\}$, then $h$ is a resolving function for both $G$ and $\overline{G}$ with $h(V)=2$; thus, $\dim_f(G)\le 2$. By Corollary~\ref{bounds_Gbar}, $\Sd_f(G, \overline{G})=\dim_f(G)=2=\frac{1}{2}+\dim_f(\overline{G})$.

Next, suppose $a \ge 2$. We show that $\dim_f(G)=\frac{a+2}{2}=\dim_f(\overline{G})$. By Lemma~\ref{obs_twin_frac}, ${g}(\cup_{i=1}^{a}\{\ell_i\})\ge \frac{a}{2}$ and $g(u_2)+g(u_4)\ge 1$; thus, $g(V) \ge \frac{a+2}{2}$ and hence $\dim_f(G) \ge \frac{a+2}{2}$. The same argument works to show $\dim_f(\overline{G})\ge \frac{a+2}{2}$. If we let $h:V \rightarrow [0,1]$ be a function defined by $h(u_1)=h(u_3)=0$ and $h(v)=\frac{1}{2}$ for each $v\in V-\{u_1, u_3\}$, then $h$ is a resolving function for both $G$ and $\overline{G}$ with $h(V)=\frac{a+2}{2}$; thus, $\max\{\dim_f(G),\dim_f(\overline{G}),\Sd_f(G, \overline{G})\} \le \frac{a+2}{2}$. By Corollary~\ref{bounds_Gbar}, $\Sd_f(G, \overline{G})=\dim_f(G)=\dim_f(\overline{G})=\frac{a+2}{2}$.  

\medskip

\textbf{Case 4: $G$ is isomorphic to Figure~\ref{fig_unicyclic33}(d).} In this case, we show that $\Sd_f(G, \overline{G})=\dim_f(G)=2=\frac{1}{3}+\dim_f(\overline{G})$ when $a=b=1$, and $\Sd_f(G, \overline{G})=\dim_f(\overline{G})$ when $a\ge 2$ or $b\ge 2$.

First, suppose $a=b=1$. We show that $\Sd_f(G, \overline{G})=\dim_f(G)=2=\frac{1}{3}+\dim_f(\overline{G})$. Since $R_{\overline{G}}\{\ell_1, u_1\}=\{\ell_1, u_1, u_2, u_4\}$, $R_{\overline{G}}\{\ell'_1,u_2\}=\{\ell'_1, u_1, u_2, u_3\}$, $R_{\overline{G}}\{\ell_1, u_4\}=\{\ell_1, u_3, u_4\}$, $R_{\overline{G}}\{\ell'_1, u_3\}=\{\ell'_1, u_3, u_4\}$ and $R_{\overline{G}}\{\ell_1, \ell'_1\}=\{\ell_1, \ell'_1, u_1, u_2\}$, we have $\tilde{g}(\ell_1)+\tilde{g}(u_1)+\tilde{g}(u_2)+\tilde{g}(u_4)\ge 1$, $\tilde{g}(\ell'_1)+\tilde{g}(u_1)+\tilde{g}(u_2)+\tilde{g}(u_3)\ge 1$, $\tilde{g}(\ell_1)+\tilde{g}(u_3)+\tilde{g}(u_4)\ge 1$, $\tilde{g}(\ell'_1)+\tilde{g}(u_3)+\tilde{g}(u_4)\ge 1$ and $\tilde{g}(\ell_1)+\tilde{g}(\ell'_1)+\tilde{g}(u_1)+\tilde{g}(u_2)\ge 1$. By summing over the five inequalities, we have $3\tilde{g}(V)\ge 5$, and thus $\dim_f(\overline{G})\ge \frac{5}{3}$. If $h':V \rightarrow [0,1]$ is a function defined by $h'(u_1)=h'(u_2)=\frac{1}{6}$ and $h'(v)=\frac{1}{3}$ for each $V-\{u_1, u_2\}$, then $h'$ is a resolving function for $\overline{G}$ with $h'(V)=\frac{5}{3}$; thus, $\dim_f(\overline{G}) \le \frac{5}{3}$. Now, noting that $R_G\{u_1, u_3\}=\{\ell_1, u_1, u_3\}$ and $R_G\{u_2, u_4\} =\{\ell'_1, u_2, u_4\}$, we have $g(\ell_1)+g(u_1)+g(u_3) \ge 1$ and $g(\ell'_1)+g(u_2)+g(u_4) \ge 1$; thus, $g(V)\ge 2$ and hence $\dim_f(G) \ge 2$. If $h:V\rightarrow [0,1]$ is a function defined by $h(\ell_1)=h(\ell'_1)=0$ and $h(v)=\frac{1}{2}$ for each $V- \{\ell_1, \ell'_1\}$, then $h$ forms a resolving function for both $G$ and $\overline{G}$ with $h(V)=2$; thus, $\max\{\dim_f(G),\dim_f(\overline{G}),\Sd_f(G, \overline{G})\}\ \le 2$. By Corollary~\ref{bounds_Gbar}, $\Sd_f(G, \overline{G})=\dim_f(G)=2=\frac{1}{3}+\dim_f(\overline{G})$.

Second, suppose $a\ge 2=1+b$ or $b\ge 2=1+a$, say the former. We show that $\Sd_f(G, \overline{G})=\dim_f(\overline{G})=\frac{a}{2}+\frac{4}{3}=\frac{1}{3}+\dim_f(G)$. Note that $g(\ell'_1)+g(u_2)+g(u_4)\ge 1$ since $R_G\{u_2, u_4\}=\{\ell'_1,u_2, u_4 \}$, and $g(\cup_{i=1}^{a}\{\ell_i\}) \ge \frac{a}{2}$ by Lemma~\ref{obs_twin_frac}; thus, $g(V)\ge \frac{a}{2}+1$ and hence $\dim_f(G)\ge\frac{a}{2}+1$. If we let $h':V\rightarrow [0,1]$ be a function defined by $h'(u_i)=0$ for each $i\in[3]$ and $h(v)=\frac{1}{2}$ for each $v\in V-\{u_1, u_2, u_3\}$, then $h'$ is a resolving function of $G$ with $h'(V)=\frac{a}{2}+1$; thus, $\dim_f(G)\le \frac{a}{2}+1$. Now, we note the following: (i) $\tilde{g}(\cup_{i=1}^{a}\{\ell_i\}) \ge \frac{a}{2}$ by Lemma~\ref{obs_twin_frac}; (ii) $R_{\overline{G}}\{u_2, u_4\}=\{\ell'_1, u_1, u_2, u_4\}$, $R_{\overline{G}}\{u_3, u_4\}=\{u_1, u_2,u_3, u_4\}$, $R_{\overline{G}}\{\ell'_1, u_2\}=\{\ell'_1,u_1,u_2, u_3\}$ and $R_{\overline{G}}\{\ell'_1, u_3\}=\{\ell'_1,u_3, u_4\}$, and thus we have $\tilde{g}(\ell'_1)+\tilde{g}(u_1)+\tilde{g}(u_2)+\tilde{g}(u_4)\ge 1$, $\tilde{g}(u_1)+\tilde{g}(u_2)+\tilde{g}(u_3)+\tilde{g}(u_4)\ge 1$, $\tilde{g}(\ell'_1)+\tilde{g}(u_1)+\tilde{g}(u_2)+\tilde{g}(u_3)\ge 1$ and $\tilde{g}(\ell'_1)+\tilde{g}(u_3)+\tilde{g}(u_4)\ge 1$. By summing over the four inequalities in (ii), we have $3\tilde{g}(V-(\cup_{i=1}^{a}\{\ell_i\}))\ge 4$, i.e, $\tilde{g}(V-(\cup_{i=1}^{a}\{\ell_i\}))\ge \frac{4}{3}$; this, together with (i), we have $\tilde{g}(V)\ge \frac{a}{2}+\frac{4}{3}$, and hence $\dim_f(\overline{G}) \ge \frac{a}{2}+\frac{4}{3}$. If $h:V \rightarrow [0,1]$ is a function defined by $h(u_1)=0$, $h(\ell_i)=\frac{1}{2}$ for each $i\in[a]$, and $h(v)=\frac{1}{3}$ for each $v\in V-(\{u_1\} \cup (\cup_{i=1}^{a}\{\ell_i\}))$, then $h$ is a resolving function for both $G$ and $\overline{G}$ with $h(V)=\frac{a}{2}+\frac{4}{3}$; thus, $\max\{\dim_f(G),\dim_f(\overline{G}),\Sd_f(G, \overline{G})\}\ \le \frac{a}{2}+\frac{4}{3}$. By Corollary~\ref{bounds_Gbar}, $\Sd_f(G, \overline{G})=\dim_f(\overline{G})=\frac{a}{2}+\frac{4}{3}=\frac{1}{3}+\dim_f(G)$.

Third, suppose $a \ge 2$ and $b \ge 2$. We show that $\dim_f(\overline{G})=\frac{a+b+2}{2}$. We note the following: (i) $\tilde{g}(\cup_{i=1}^{a}\{\ell_i\})\ge \frac{a}{2}$ and $\tilde{g}(\cup_{j=1}^{b}\{\ell'_j\})\ge \frac{b}{2}$ by Lemma~\ref{obs_twin_frac}; (ii) $\tilde{g}(\cup_{i=1}^{4}\{u_i\})\ge 1$ since $R_{\overline{G}}\{u_3, u_4\}=\{u_1, u_2, u_3, u_4\}$. So, $\tilde{g}(V)\ge \frac{a+b+2}{2}$, and thus $\dim_f(\overline{G})\ge \frac{a+b+2}{2}$. If $h:V\rightarrow [0,1]$ is a function defined by $h(u_1)=h(u_2)=0$ and $h(v)=\frac{1}{2}$ for each $v\in V-\{u_1, u_2\}$, then $h$ is a resolving function for both $G$ and $\overline{G}$ with $h(V)=\frac{a+b+2}{2}$; thus, $\max\{\dim_f(G),\dim_f(\overline{G}),\Sd_f(G, \overline{G})\}\le \frac{a+b+2}{2}$. By Corollary~\ref{bounds_Gbar}, $\Sd_f(G, \overline{G})= \dim_f(\overline{G})=\frac{a+b+2}{2}$.~\hfill
\end{proof}

Next, we consider $\Sd_f(G, \overline{G})$ for an arbitrary unicyclic graph $G$. We recall the following.

\begin{observation}\emph{\cite{SdimGbar}}\label{unicyclic_diam2}
If $G$ is a unicyclic graph of order $n\ge 4$ with $\diam(G)=2$, then $G\in\{C_4, C_5\}$ or $G$ is isomorphic to the graph obtained from the star $K_{1, n-1}$ by joining an edge between two end-vertices.
\end{observation}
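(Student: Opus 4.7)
My plan is to case-analyze on the length $g$ of the unique cycle $C \subseteq G$. If $G = C_n$, then $\diam(G) = \lfloor n/2 \rfloor$, which equals $2$ precisely when $n \in \{4, 5\}$, yielding the two cycle possibilities. Otherwise $V(G) \neq V(C)$ and the goal is to show $G$ is the claimed ``triangle with a common pendant-hub'' graph.

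For the substantive case, I would first record a distance-decomposition fact that follows from unicyclicity: $G - E(C)$ is a forest whose components each meet $V(C)$ in exactly one vertex, so to every $v \in V(G) - V(C)$ one can associate a unique $c(v) \in V(C)$ with the property that $d_G(v, w) = d_G(v, c(v)) + d_C(c(v), w)$ for all $w \in V(C)$. I would then prove $g = 3$: if $g \ge 4$, pick any $v \in V(G) - V(C)$, let $u = c(v)$, and choose $w \in V(C)$ with $d_C(u, w) = 2$; then $d_G(v, w) \ge 1 + 2 = 3$, contradicting $\diam(G) = 2$.

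So $C$ is a triangle $v_1 v_2 v_3$. Next I would show every $v \in V(G) - V(C)$ is adjacent to some $v_i$: otherwise $d_G(v, c(v)) \ge 2$ and for $v_j \neq c(v)$ one gets $d_G(v, v_j) \ge 3$, a contradiction. Finally, all such vertices attach to the same triangle vertex: if $vv_i, wv_j \in E(G)$ with $i \neq j$ and $v, w \in V(G) - V(C)$, then the unique $v$-to-$w$ path has length $1 + 1 + 1 = 3$. Relabeling, every vertex of $V(G) - V(C)$ is adjacent to $v_1$, giving the triangle $v_1 v_2 v_3$ with $n - 3$ pendants at $v_1$, which is precisely $K_{1, n-1}$ (center $v_1$, leaves $v_2, v_3$ and the $n-3$ pendants) with the extra edge $v_2 v_3$, as claimed.

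The main (modest) obstacle is cleanly establishing the distance-decomposition identity from unicyclicity before using it freely; once that is in hand, each of the three contradiction steps above is a one-line calculation, and the recognition of the final graph as ``$K_{1, n-1}$ plus an edge between two leaves'' is immediate.
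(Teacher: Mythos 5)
The paper does not prove this observation; it is quoted from the companion manuscript \cite{SdimGbar}, so there is no internal proof to compare against and your argument has to stand on its own. It does: the initial split into $G=C_n$ versus $V(G)\neq V(C)$, the distance decomposition $d_G(v,w)=d_G(v,c(v))+d_C(c(v),w)$ for $w\in V(C)$ (which is legitimate because the branch containing $v$ meets the cycle only at $c(v)$, and shortest paths between cycle vertices of a unicyclic graph stay on the cycle), the forcing of girth $3$, and the two contradiction steps showing that every off-cycle vertex is pendant and that all of them hang from the same triangle vertex are each correct, and the resulting graph is exactly $K_{1,n-1}$ with an added edge between two end-vertices. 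One small wording issue in the last step: the $v$--$w$ path is not literally unique, since there are two $v_i$--$v_j$ routes around the triangle; what you need (and what is true) is that $v$ and $w$ lie in distinct components of $G-E(C)$, so every $v$--$w$ path passes through both $v_i$ and $v_j$, giving $d_G(v,w)=1+d_C(v_i,v_j)+1=3>2$. With that phrasing tightened, the proof is complete.
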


\begin{lemma}\label{sdim_frac_unicyclic_diam2}
Let $G$ be a unicyclic graph of order $n\ge 4$ with $\diam(G)=2$. Let $H$ be the graph obtained from $K_{1, n-1}$ by joining an edge between two end-vertices. Then
\begin{equation*}
\Sd_f(G, \overline{G})=\dim_f(G)=\left\{
\begin{array}{ll}
\dim_f(\overline{G}) & \mbox{ if $G\in\{C_4,C_5\}$ or $G=H$ with $n\ge5$},\\
\dim_f(\overline{G})+\frac{1}{2} & \mbox{ if $G=H$ with $n=4$}.
\end{array}\right.
\end{equation*}
\end{lemma}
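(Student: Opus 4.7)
My plan is to use Observation~\ref{unicyclic_diam2} to split into three sub-cases: $G=C_4$, $G=C_5$, or $G=H$, where $H$ is the graph obtained from $K_{1,n-1}$ by joining two end-vertices with an edge. The two cycle cases follow at once from Proposition~\ref{sdim_frac_cycle}, which already gives $\Sd_f(C_4,\overline{C}_4)=2=\dim_f(C_4)=\dim_f(\overline{C}_4)$ and $\Sd_f(C_5,\overline{C}_5)=\frac{5}{4}=\dim_f(C_5)=\dim_f(\overline{C}_5)$. The substantive work is for $G=H$.

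Label the vertices of $H$ so that $v$ is the center of $K_{1,n-1}$ and $u_1,\ldots,u_{n-1}$ are the leaves, with $u_1u_2$ the added edge. Direct inspection identifies the twin equivalence classes: in $H$ they are $\{v\}$, $\{u_1,u_2\}$, and, when $n\geq 5$, $\{u_3,\ldots,u_{n-1}\}$; in $\overline{H}$ the vertex $v$ becomes isolated, and $\{u_1,u_2\}$ and $\{u_3,\ldots,u_{n-1}\}$ persist as twin classes. Assuming $n\geq 5$, I would apply Lemma~\ref{obs_twin_frac} to these two classes shared by $H$ and $\overline{H}$: for any simultaneous resolving function $h$ of $\{H,\overline{H}\}$ one has $h(u_1)+h(u_2)\geq 1$ and $\sum_{i=3}^{n-1}h(u_i)\geq\tfrac{n-3}{2}$, summing to $h(V)\geq\tfrac{n-1}{2}$. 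For the matching upper bound I would exhibit the explicit function $h(v)=0$, $h(u_i)=\tfrac12$ for $i\in[n-1]$, and verify by inspection of the pairs $\{u_i,u_j\}$ and $\{v,u_i\}$ (both $H$ and $\overline{H}$ have diameter at most two, so $R$-sets are easy to list) that $h(R_{G'}\{x,y\})\geq 1$ for every $G'\in\{H,\overline{H}\}$ and every pair. This yields $\Sd_f(H,\overline{H})=\dim_f(H)=\dim_f(\overline{H})=\tfrac{n-1}{2}$.

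The delicate case is $n=4$, where $\overline{H}$ is the disjoint union of the path $u_1u_3u_2$ with the isolated vertex $v$, and the class $\{u_3,\ldots,u_{n-1}\}$ collapses to the trivial $\{u_3\}$. Here $\dim_f(\overline{H})=1$ follows from the single twin inequality $g(u_1)+g(u_2)\geq 1$, realized by $g(u_1)=g(u_2)=\tfrac12$. To prove the harder bound $\dim_f(H)\geq\tfrac32$, I would supplement the twin inequality with the two inequalities $g(v)+g(u_i)+g(u_3)\geq 1$ for $i=1,2$, arising from $R_H\{v,u_1\}=\{v,u_1,u_3\}$ and $R_H\{v,u_2\}=\{v,u_2,u_3\}$; adding the three inequalities yields $2g(V)\geq 3$. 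For the simultaneous upper bound, the function $h(v)=0$, $h(u_1)=h(u_2)=h(u_3)=\tfrac12$ is easily checked to resolve both $H$ and $\overline{H}$ with $h(V)=\tfrac32$. Combined with $\Sd_f(H,\overline{H})\geq\max\{\dim_f(H),\dim_f(\overline{H})\}$ from Corollary~\ref{bounds_Gbar}, this gives $\Sd_f(H,\overline{H})=\dim_f(H)=\tfrac32=\dim_f(\overline{H})+\tfrac12$. The only real obstacle lies precisely in this $n=4$ sub-case: no nontrivial twin class beyond $\{u_1,u_2\}$ is present, so the extra $\tfrac12$ must be extracted from resolvent computations for non-twin pairs $\{v,u_1\},\{v,u_2\}$ inside $H$; all other cases are driven entirely by Lemma~\ref{obs_twin_frac} together with a single uniform candidate function.
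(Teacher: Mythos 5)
Your proposal is correct and follows essentially the same route as the paper: the same case split via Observation~\ref{unicyclic_diam2} with the cycle cases delegated to Proposition~\ref{sdim_frac_cycle}, the same twin-class inequalities from Lemma~\ref{obs_twin_frac} for $n\ge 5$, the same three resolvent inequalities (one twin pair plus $R_H\{v,u_1\}$ and $R_H\{v,u_2\}$) summing to $2g(V)\ge 3$ when $n=4$, and the same candidate function (zero on the center, $\frac{1}{2}$ elsewhere) for all upper bounds. The only quibble is your aside that $\overline{H}$ has diameter at most two: it is in fact disconnected (the center is isolated), so its diameter is infinite, but this does not affect the verification you describe.
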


\begin{proof}
Let $G$ be a unicyclic graph of order $n\ge 4$ with $\diam(G)=2$. By Observation~\ref{unicyclic_diam2}, $G$ is isomorphic to $C_4$, $C_5$, or $H$. By Proposition~\ref{sdim_frac_cycle} and its proof, $\Sd_f(C_4, \overline{C}_4)=2=\dim_f(\overline{C}_4)=\dim_f(C_4)$ and $\Sd_f(C_5, \overline{C}_5)=\frac{5}{4}=\dim_f(\overline{C}_5)=\dim_f(C_5)$. Now, suppose $G$ is isomorphic to $H=K_{1, n-1}+e$ with $V(H)=\{u_0, u_1, \ldots, u_{n-1}\}$ such that $\deg_H(u_0)=n-1$ and $e=u_1u_2\in E(H)$. Let $g:V\rightarrow [0,1]$ be any resolving function for $H$ and let $\tilde{g}:V\rightarrow [0,1]$ be any resolving function for $\overline{H}$ on the common vertex set $V=V(H)=V(\overline{H})$. If $n\ge 5$, then $g(u_1)+g(u_2) \ge 1$ and $g(\cup_{i=3}^{n-1}\{u_i\}) \ge \frac{n-3}{2}$ by Lemma~\ref{obs_twin_frac}; thus, $g(V)\ge \frac{n-1}{2}$, and hence $\dim_f(H)\ge \frac{n-1}{2}$. The same argument works to show that $\tilde{g}(V)\ge \frac{n-1}{2}$ and $\dim_f(\overline{H})\ge \frac{n-1}{2}$. If $h:V \rightarrow [0,1]$ is a function defined by $h(u_0)=0$ and $h(u_i)=\frac{1}{2}$ for each $i\in[n-1]$, then $h$ forms a resolving function for both $H$ and $\overline{H}$; thus, $\max\{\dim_f(H), \dim_f(\overline{H}), \Sd_f(H, \overline{H})\} \le \frac{n-1}{2}$. By Corollary~\ref{bounds_Gbar}, $\Sd_f(H, \overline{H})= \dim_f(\overline{H})=\dim_f(H)=\frac{n-1}{2}$. So, suppose $n=4$; we show that $\Sd_f(H, \overline{H})=\frac{3}{2}=\dim_f(H)=\frac{1}{2}+\dim_f(\overline{H})$. Since $\overline{H}$ is isomorphic to the disjoint union of $K_1$ and $P_3$, $\dim_f(\overline{H})=1$ by Theorem~\ref{dim_frac_graph}(a). Since $R_G\{u_1, u_2\}=\{u_1, u_2\}$, $R_G\{u_0,u_1\}=\{u_0, u_1, u_3\}$ and $R_G\{u_0,u_2\}=\{u_0, u_2, u_3\}$, we have $g(u_1)+g(u_2)\ge 1$, $g(u_0)+g(u_1)+g(u_3)\ge 1$ and $g(u_0)+g(u_2)+g(u_3)\ge 1$. By summing over the three inequalities, we have $2g(V)\ge 3$, and hence $\dim_f(H) \ge \frac{3}{2}$. If $h':V\rightarrow[0,1]$ is a function defined by $h'(u_0)=0$ and $h'(u_i)=\frac{1}{2}$ for each $i\in[3]$, then $h'$ is a resolving function for both $H$ and $\overline{H}$; thus, $\max\{\dim_f(H), \dim_f(\overline{H}), \Sd_f(H, \overline{H})\} \le \frac{3}{2}$. By Corollary~\ref{bounds_Gbar}, $\Sd_f(H, \overline{H})=\dim_f(H)=\frac{3}{2}=\frac{1}{2}+\dim_f(\overline{H})$ for $n=4$.~\hfill
\end{proof}

\begin{theorem}
Let $G$ be a unicyclic graph of order at least three. Let $H_1$ be the graph obtained from $K_1 \cup C_3$ by joining an edge between the vertex in $K_1$ and a vertex in $C_3$, 
let $H_2$ be the graph obtained from $K_1 \cup C_4$ by joining an edge between the vertex in $K_1$ and a vertex in $C_4$, and let $H_3$ be the graph obtained from $P_6$ by joining an edge between the two support vertices of $P_6$. Then
\begin{equation*}
\Sd_f(G, \overline{G})=\left\{
\begin{array}{ll}
\dim_f(G)=\dim_f(\overline{G})+\frac{1}{2} & \mbox{ if } G\in\{H_1, H_2\},\\
\dim_f(G)=\dim_f(\overline{G})+\frac{1}{3} & \mbox{ if } G=H_3,\\
\dim_f(\overline{G}) & \mbox{ otherwise}.
\end{array}\right.
\end{equation*}
\end{theorem}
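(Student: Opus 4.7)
The plan is a case analysis on the diameter of $G$, in each case invoking a previously-established result. Since $G$ is unicyclic of order at least three, the four cases ``$G$ is a cycle'', $\diam(G)=2$ with $G$ non-cycle, $\diam(G)=3$, and $\diam(G)\ge 4$ are exhaustive.

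If $G$ is a cycle, Proposition~\ref{sdim_frac_cycle} gives $\Sd_f(G,\overline{G})=\dim_f(\overline{G})$, placing $G$ in the ``otherwise'' branch of the claimed formula. So assume $G$ is unicyclic but not a cycle. If $\diam(G)=2$, then by Observation~\ref{unicyclic_diam2} we must have $G\cong K_{1,n-1}+e$ for some $n\ge 4$, and Lemma~\ref{sdim_frac_unicyclic_diam2} yields the conclusion: when $n=4$ the graph $G$ is precisely $H_1$ and $\Sd_f(G,\overline{G})=\dim_f(G)=\dim_f(\overline{G})+\tfrac{1}{2}$; when $n\ge 5$, $\Sd_f(G,\overline{G})=\dim_f(\overline{G})$.

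If $\diam(G)=3$, split according to $\diam(\overline{G})$. When $\diam(\overline{G})=3$, Proposition~\ref{unicyclic_diam3} tells us $G$ is isomorphic to one of the graphs in Figure~\ref{fig_unicyclic33}, and Proposition~\ref{sdim_frac_unicyclic_main} then supplies the precise values. Direct inspection identifies Figure~\ref{fig_unicyclic33}(c) with $a=1$ as $H_2$ (giving the $\tfrac{1}{2}$ correction) and Figure~\ref{fig_unicyclic33}(d) with $a=b=1$ as $H_3$ (giving the $\tfrac{1}{3}$ correction); all other subcases fall into the ``otherwise'' branch with $\Sd_f(G,\overline{G})=\dim_f(\overline{G})$. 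When $\diam(\overline{G})\ne 3$ we have $\diam(\overline{G})\le 2$ (since $\diam(\overline{G})\ge 4$ would, by Proposition~\ref{diameter}, force $\diam(G)\le 2$), so we apply Theorem~\ref{thm_diam} to the pair $(\overline{G},G)$: the hypothesis $\diam(\overline{G})\le\diam(G)$ holds and $\{\diam(\overline{G}),\diam(G)\}\ne\{3\}$, yielding $\Sd_f(G,\overline{G})=\Sd_f(\overline{G},G)=\dim_f(\overline{G})$.

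Finally, if $\diam(G)\ge 4$, then Proposition~\ref{diameter} gives $\diam(\overline{G})\le 2$, and applying Theorem~\ref{thm_diam} to $(\overline{G},G)$ (the diameter comparison and the exclusion of $\{3\}$ being immediate) gives $\Sd_f(G,\overline{G})=\dim_f(\overline{G})$, the ``otherwise'' branch. The heavy lifting has been off-loaded to prior results, so the only obstacle is bookkeeping: verifying that each exceptional graph $H_1,H_2,H_3$ arises in precisely the subcase whose quantitative formula it matches, and confirming the case analysis is exhaustive (in particular, noting the symmetry $\Sd_f(G,\overline{G})=\Sd_f(\overline{G},G)$ when applying Theorem~\ref{thm_diam} with the roles swapped).
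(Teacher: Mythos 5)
Your proposal is correct and follows essentially the same route as the paper: a case analysis on $\diam(G)$ (and, where needed, $\diam(\overline{G})$), delegating each case to Proposition~\ref{sdim_frac_cycle}, Observation~\ref{unicyclic_diam2} with Lemma~\ref{sdim_frac_unicyclic_diam2}, Propositions~\ref{unicyclic_diam3} and~\ref{sdim_frac_unicyclic_main}, or Theorem~\ref{thm_diam} via Proposition~\ref{diameter}, with the same identification of $H_1$, $H_2$, $H_3$ as the exceptional graphs. The only difference is cosmetic bookkeeping in how the cases are grouped (the paper uses ``$\diam(G)=2$ or $\diam(\overline{G})=2$'' as a single case), so there is nothing substantive to add.
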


\begin{proof}
Let $G$ be a unicyclic graph of order at least three. We consider the following three cases: (i) $\diam(G)=1$; (ii) $\diam(G)=2$ or $\diam(\overline{G})=2$; (iii) $\diam(G)=3=\diam(\overline{G})$. 

If $\diam(G)=1$, then $G=C_3$ and $\Sd_f(C_3, \overline{C}_3)=\dim_f(\overline{C}_3)=\dim_f(C_3)=\frac{3}{2}$ by Proposition~\ref{sdim_frac_cycle}. If $\diam(G)=2$, then, by Observation~\ref{unicyclic_diam2}, $G\in\{C_4, C_5\}$ or $G$ is isomorphic to the graph $H_n^*$ obtained from the star $K_{1, n-1}$ by joining an edge between two end-vertices. By Lemma~\ref{sdim_frac_unicyclic_diam2}, we have the following: (i) if $G$ is isomorphic to $H_4^*=H_1$, then $\Sd_f(G, \overline{G})=\dim_f(G)=\dim_f(\overline{G})+\frac{1}{2}$; (ii) if $G\in\{C_4, C_5, H^*_n\}$ for $n\ge5$, then $\Sd_f(G, \overline{G})=\dim_f(\overline{G})$. If $\diam(\overline{G})=2$, then $\Sd_f(G, \overline{G})=\dim_f(\overline{G})$ by Theorem~\ref{thm_diam}. If $\diam(G)=3=\diam(\overline{G})$, then $G$ is isomorphic to one of the graphs in Figure~\ref{fig_unicyclic33} by Proposition~\ref{unicyclic_diam3}. By Proposition~\ref{sdim_frac_unicyclic_main}, we have the following: (i) if $G=H_2$, then $\Sd_f(G, \overline{G})=\dim_f(G)=\dim_f(\overline{G})+\frac{1}{2}$; (ii) if $G=H_3$, then $\Sd_f(G, \overline{G})=\dim_f(G)=\dim_f(\overline{G})+\frac{1}{3}$; (iii) if $G\not\in\{H_2, H_3\}$, then $\Sd_f(G, \overline{G})=\dim_f(\overline{G})$.~\hfill
\end{proof}

We conclude this section with a remark and an open problem.

\begin{remark}
There exists a graph $G$ of order $n\ge 2$ such that $\min\{\dim_f(G)+\dim_f(\overline{G}), \frac{n}{2}\}-\Sd_f(G, \overline{G})$ can be arbitrarily large. Let $v_1, v_2, \ldots, v_k$ be the exterior major vertices of a tree $G$ with $ter_G(v_i)=3$ for each $i\in[k]$, where $k\ge 2$ (see Figure~\ref{fig_ex1}). Let $g:V \rightarrow[0,1]$ be a resolving function for $G$ or $\overline{G}$ on the common vertex set $V=V(G)=V(\overline{G})$. By Lemma~\ref{obs_twin_frac}, $g(\{\ell_i, \ell'_i, \ell''_i\}) \ge \frac{3}{2}$ for each $i\in[k]$. Thus, $g(V) \ge \sum_{i=1}^{k}g(\{\ell_i, \ell'_i, \ell''_i\}) \ge \frac{3k}{2}$. So, $\dim_f(G)\ge \frac{3k}{2}$ and $\dim_f(\overline{G})\ge \frac{3k}{2}$. If we let $h:V \rightarrow [0,1]$ be a function defined by $h(v_i)=0$, for each $i\in[k]$, and $h(u)=\frac{1}{2}$ for each $u\in V-(\cup_{i=1}^{k}\{v_i\})$, then $h$ is a resolving function for both $G$ and $\overline{G}$ with $g(V)=\frac{3k}{2}$; thus, $\max\{\dim_f(G), \dim_f(\overline{G}), \Sd_f(G, \overline{G})\} \le \frac{3k}{2}$. So, $\Sd_f(G, \overline{G})=\dim_f(G)=\dim_f(\overline{G})=\frac{3k}{2}$. Therefore, $\min\{\dim_f(G)+\dim_f(\overline{G}), \frac{n}{2}\}-\Sd_f(G, \overline{G})=2k-\frac{3k}{2}=\frac{k}{2} \rightarrow \infty$ as $k\rightarrow \infty$.
\end{remark}

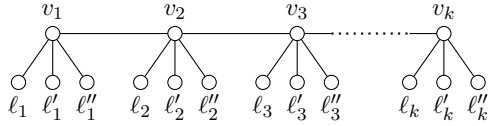
\begin{figure}[ht]
\centering
\begin{tikzpicture}[scale=.65, transform shape]

\node [draw, shape=circle, scale=.8] (v1) at  (0,0) {};
\node [draw, shape=circle, scale=.8] (v2) at  (2.5,0) {};
\node [draw, shape=circle, scale=.8] (v3) at  (5,0) {};
\node [draw, shape=circle, scale=.8] (v4) at  (8,0) {};
\node [draw, shape=circle, scale=.8] (11) at  (-0.7,-1) {};
\node [draw, shape=circle, scale=.8] (12) at  (0,-1) {};
\node [draw, shape=circle, scale=.8] (13) at  (0.7,-1) {};
\node [draw, shape=circle, scale=.8] (21) at  (1.8,-1) {};
\node [draw, shape=circle, scale=.8] (22) at  (2.5,-1) {};
\node [draw, shape=circle, scale=.8] (23) at  (3.2,-1) {};
\node [draw, shape=circle, scale=.8] (31) at  (4.3,-1) {};
\node [draw, shape=circle, scale=.8] (32) at  (5,-1) {};
\node [draw, shape=circle, scale=.8] (33) at  (5.7,-1) {};

\node [draw, shape=circle, scale=.8] (41) at  (7.3,-1) {};
\node [draw, shape=circle, scale=.8] (42) at  (8,-1) {};
\node [draw, shape=circle, scale=.8] (43) at  (8.7,-1) {};

\node [scale=1.3] at (0,0.45) {$v_1$};
\node [scale=1.3] at (2.5,0.45) {$v_2$};
\node [scale=1.3] at (5,0.45) {$v_3$};
\node [scale=1.3] at (8,0.45) {$v_{k}$};
\node [scale=1.3] at (-0.7,-1.5) {$\ell_{1}$};
\node [scale=1.3] at (0,-1.5) {$\ell'_{1}$};
\node [scale=1.3] at (0.7,-1.5) {$\ell''_{1}$};
\node [scale=1.3] at (1.8,-1.5) {$\ell_2$};
\node [scale=1.3] at (2.5,-1.5) {$\ell'_2$};
\node [scale=1.3] at (3.2,-1.5) {$\ell''_2$};
\node [scale=1.3] at (4.3,-1.5) {$\ell_3$};
\node [scale=1.3] at (5,-1.5) {$\ell'_3$};
\node [scale=1.3] at (5.7,-1.5) {$\ell''_3$};

\node [scale=1.3] at (7.3,-1.5) {$\ell_{k}$};
\node [scale=1.3] at (8,-1.5) {$\ell'_{k}$};
\node [scale=1.3] at (8.7,-1.5) {$\ell''_{k}$};

\draw(v1)--(v2)--(v3)--(5.7,0); \draw[thick, dotted](5.7,0)--(7.3,0);\draw(7.3,0)--(v4);
\draw(11)--(v1)--(12);\draw(21)--(v2)--(22);\draw(31)--(v3)--(32);\draw(41)--(v4)--(42);
\draw(v1)--(13);\draw(v2)--(23);\draw(v3)--(33);\draw(v4)--(43);

\end{tikzpicture}
\caption{\small{A graph $G$ such that $\min\{\dim_f(G)+\dim_f(\overline{G}), \frac{|V(G)|}{2}\}-\Sd_f(G, \overline{G})$ can be arbitrarily large, where $k\ge 2$.}}
\label{fig_ex1}
\end{figure}

\begin{question}
We note that $\Sd_f(P_4, \overline{P}_4)=\frac{4}{3}>1=\max\{\dim_f(P_4), \dim_f(\overline{P}_4)\}$ by Proposition~\ref{sim_fdim_path} and Theorem~\ref{dim_frac_graph}(a). Can $\Sd_f(G, \overline{G})-\max\{\dim_f(G), \dim_f(\overline{G})\}$ be arbitrarily large as $G$ varies?
\end{question}

\medskip

\textbf{Acknowledgement.} This research was partially supported by US-Slovenia Bilateral Collaboration Grant (BI-US/19-21-077). The authors thank the anonymous referees for some helpful comments which improved the presentation of the paper.


\end{document}